\DeclareMathOperator{\id}{id}
\DeclareMathOperator{\Lie}{Lie}
\DeclareMathOperator{\ad}{ad}
\newcommand{\Z}{\mathbb{Z}}
\newcommand{\F}{\mathbb{F}}
\newcommand{\eps}{\varepsilon}
\newcommand{\sub}{\subseteq}
\newcommand{\Hom}{\mathop{\mathrm{Hom}}\nolimits}
\newcommand{\rk}{\mathop{\mathrm{rk}}\nolimits}
\renewcommand{\ge}{\geqslant}
\renewcommand{\le}{\leqslant}
\newcommand{\sm}{\setminus}
\newcommand{\map}[3]{#1\colon #2\to #3}
\newcommand{\phan}{\vphantom{,}}
\newcommand{\ox}{\otimes}
\newcommand{\restr}[2]{\left. #1\right|_{#2}}
\renewcommand{\>}{\rangle}
\newcommand{\emp}{\varnothing}
\renewcommand{\P}{\EuScript{P}}
\newcommand{\euQ}{\EuScript{Q}}
\newcommand{\euL}{\EuScript{L}}
\newcommand\leftact[2]{\phan^{#1}#2}
\theoremstyle{plain}
\newtheorem{thm}{Theorem}
\newtheorem{lem}{Lemma}
\newtheorem{prop}{Proposition}
\newtheorem{cor}{Corollary}
\newtheorem{prob}{Problem}
\theoremstyle{definition}
\newtheorem{defn}{Definition}
\theoremstyle{remark}
\newtheorem*{rem}{Remark}
\newtheorem*{warn}{Warning}
\newcommand{\Stab}{\mathop{\mathrm{Stab}}\nolimits}
\newcommand{\SL}{\mathop{\mathrm{SL}}\nolimits}
\newcommand{\D}{\mathop{\mathrm{D}}\nolimits}
\renewcommand{\tilde}{\widetilde}
\renewcommand{\hat}{\widehat}
\newcommand{\vpi}{\varpi}
\DeclareMathOperator{\gen}{gen}
\DeclareMathOperator{\lev}{lev}
\DeclareMathOperator{\ellev}{el.lev}
\DeclareMathOperator{\invlev}{inv.lev}
\def\@settitle{\begin{center}%
    \baselineskip14\p@\relax
    \bfseries
    \@title
  \end{center}%
}
\def\@evenhead{\hfil\sc p. gvozdevsky\hfil}
\def\@oddhead{\hfil\sc overgroups of subsystem subgroups \hfil}
\def\le{\leqslant}
\def\ge{\geqslant}
\begin{document}

\title[Overgroups of subsystem subgroups]{Overgroups of subsystem subgroups in exceptional groups:\break nonideal levels}

\author{P.~Gvozdevsky}
\address{St. Petersburg\\ State University,\\ 14th Line V.O., 29,\\ Saint Petersburg 199178 Russia}
\email{gvozdevskiy96@gmail.com}
\date{}

\keywords{Chevalley groups, commutative rings, exceptional groups, subsystem subgroups, subgroup lattice}
\thanks{The author is a participant of a scientific group that won "Leader" grant by "BASIS" foundation in 2020, grant \#20-7-1-27-4. Research is also supported by «Native towns», a social investment program of PJSC ''Gazprom Neft'', and also by grant given as subsidies from Russian federal budget for creation and development of international mathematical centres, agreement between MES and PDMI RSA  № 075-15-2019-1620 from November 8, 2019.
}

\selectlanguage{english}

\maketitle

\begin{abstract}
In the present paper, a description of overgroups for the subsystem sub\-groups $E(\Delta,R)$ of the Chevalley groups $G(\Phi,R)$ over the ring $R$, where $\Phi$ is a simply laced root system and $\Delta$ is its sufficiently large subsystem, is almost entirely finished. Namely objects called levels are defined and it is shown that for any such an overgroup $H$ there exists a unique level $\sigma$ such that $E(\sigma)\le H\le \Stab_{G(\Phi,R)}(L_{\max}(\sigma))$, where $E(\sigma)$ is an elementary subgroup associated with the level $\sigma$ and $L_{\max}(\sigma)$ is a corresponding sub\-al\-gebra of the Chevalley algebra. Unlike the previous papers, here levels can be more complicated than nets of ideals.  
\end{abstract}

\section{Introduction}

In the paper \cite{Aschbacher84}, devoted to the {\it maximal subgroup project}, Michael Asch\-bacher introduced eight classes $C_1$--$C_8$ of subgroups of finite simple classical groups. The groups from these classes are "obvious" maximal subgroups of a finite classical groups. To be precise, each subgroup from an Aschbacher class either is maximal itself or is contained in a maximal subgroup that in its turn either also belongs to an Aschbacher class or can be constructed by a certain explicit procedure.

N.~A.~Vavilov defined five classes of "large" subgroups of the Chevalley groups (including exceptional ones) over arbitrary rings (see, for example, \cite{StepDiss} and \cite{VavSbgs} for details). Although these subgroups are not maximal, he conjectured that they are sufficiently large for the corresponding overgroup lattice to admit a description. One of these classes is the class of subsystem subgroups (the definition will be given in \S\ref{Systems}).

In the present paper, we continue the research on the overgroups of subsystem subgroups in exceptional groups that was started in papers \cite{VSch,GvozLevi,Gvoz2A1} and \cite{GvozInside}. This time we consider a larger class of subsystems compared to the one in paper \cite{Gvoz2A1}.

To put the results of the present paper in a context, we now recall main results that are known at the moment.

	
	$\bullet$ In \cite{BV84,KoibaevBlockdiag} and also in some other papers, the overgroups of (elementary) sub\-sys\-tem subgroups in general linear group were studied. In this case, subsystem subgroups are the groups of block-diagonal matrices.
	
	$\bullet$  Further in the thesis of N.~A.~Vavilov (see, in particular, \cite{VavMIAN} and \cite{VavSplitOrt}), this results were generalised for the cases of orthogonal and symplectic groups assuming that $2\in R^*$. The full proofs were published later. After that in the thesis of Alexander Shchegolev \cite{SchegDiss} this assumption was lifted and also the similar problem for unitary groups was solved (see also \cite{SchegMainResults} and \cite{SchegSymplectic}), which closes the case of classical groups almost entirely.
	
	$\bullet$ The cases of general linear and unitary groups can be generalised to certain noncommutative rings, but these rings should satisfy some other condition (for example, to be quasifinite or PI). The papers \cite{GolubchikSubgroups,StepOLD,VavStepSubgroupsGLStability} were devoted to such generalisations.
	
	$\bullet$ It turns out that to describe overgroups of subsystem subgroups in excep\-ti\-on\-al groups  over a commutative ring (see, for example, \cite[Pro\-blem~7]{VavStepSurvey}) is a much harder problem. The first step in this direction is a paper \cite{VSch}. The table from that paper contains the list of pairs $(\Phi,\Delta)$  for which such a description may be possible in principle, along with the number of ideals determining the level and along with certain links between these ideals.
	
	$\bullet$  In the paper \cite{GvozLevi}, author solved this problem for the subsystems $A_{l-1}\le D_l$,  $D_5\le E_6$ and $E_6\le E_7$. For the simply laced systems, these are exactly the cases where the subsystem subgroup is a Levi subgroup, and the corresponding unipotant radical is abelian. 
	
	$\bullet$ Later, in paper \cite{Gvoz2A1}, author obtained a uniform solution of this problem for the large class of subsystems in simply laced root systems. In partticular, this class includes all the subsystems of maximal rank in type $E$ root systems, for example,  $A_5+A_1\le E_6$, $A_7\le E_7$, and even $8A_1\le E_8$. In this paper and in the previous one, overgroups are described by nets of ideals of the ground ring.
	
	$\bullet$ Finally, note that the result of the paper \cite{LuzF4E6}, which describes overgroups of $F_4$ in $E_6$, is not a special case of our problem, but is pretty close to it.

Now we recall how does the answer in the results mentioned above usually look like. 


Let $G$ be an abstract group, and let $\euL$ be a certain lattice of its subgroups.  The lattice $\euL$ admits sandwich classification if it is a disjoint union of "sand\-wi\-ches":
$$
\euL=\bigsqcup_i L(F_i,N_i),\quad L(F_i,N_i)=\{H\colon F_i\le H\le N_i\},
$$
where $i$ runs through some index set, and $F_i$
is a normal subgroup of $N_i$ for all $i$. To study such a lattice, it suffices to study the quotients $N_i/F_i$. In \cite{VSch} it was conjectured that
the lattice of subgroups of a Chevalley group that contain a sufficiently large subsystem subgroup admits sandwich classification for certain $F_i$ and $N_i$. Such theorems are also called {\it the standard description}.

However, (at least) in the cases where the subsystem has an irreducible component of type $A_1$, the conjectures in \cite{VSch} should be reformulated. The reason why they cannot be true as stated is that the elementary subgroup of $\SL(2,R)$ is not normal in general.

Therefore, the main result of the present paper is similar to sandwich clas\-si\-fi\-ca\-tion, but in the present paper, as well as in the paper~\cite{Gvoz2A1}, the subgroup~$F_i$ is not normal in $N_i$ in general.

To every overgroup of the given subsystem subgroup we put in correspondence certain object called the level of this overgroup. The level defines to which sandwich the given overgroup belongs. In paper \cite{Gvoz2A1}, such objects were nets of ideals. The title of the present paper implies that this time levels can be more complicated.
 
Note that for sandwiches defined by nets of ideals one can prove normality of $F_i$ in $N_i$ provided that either the subsystem has no irreducible components of type $A_1$, or the elementary subgroup is enlarged in a certain way. This is the result of the paper \cite{GvozInside}. 
 
The paper \cite{VavGav} is devoted to the $A_2$-proof of the structure theorems. That is the proof that employs the elements of the form $x_\alpha(\xi)x_\beta(\zeta)$, where $\angle(\alpha,\beta)=\pi/3$ to get into a parabolic subgroup. In paper \cite{Gvoz2A1}, author used a method based on the remark after the proof of the main lemma in \cite{VavGav}. According to that remark, to get into a parabolic subgroup one can also use the elements of the form $x_\alpha(\xi)x_\beta(\zeta)$, where $\angle(\alpha,\beta)=\pi/2$. Such method should be called the $2A_1$-proof, and it allows us to study the overgroups of subsystem subgroups even if the subsystem has type $nA_1$. In the present paper both methods are involved.

The paper is organised as follows. In \S\S\ref{SecBasicNotation} and \ref{SecSpecificNotation} we introduce the necessary notation. In \S\S\ref{SecOveralgebras}--\ref{SecLelelComp} we prove some preparatory lemmas. In \S\ref{SecWTF} we sum up all that was said in the previous sections, state the main result of the present paper, and make a plan of its proof. In \S\S\ref{Sec2A1} and \ref{SecA2}, we finish the proof of the main result. In \S\ref{SecSpecialCases} we consider important special cases and corollaries of the main theorem. 

\smallskip

I am grateful to my teacher N.~A.~Vavilov for setting the problem and for extremely
helpful suggestions.

\section{Basic Notation}
\label{SecBasicNotation}

\subsection{Root systems and Chevalley groups}
\label{Systems}

Let $\Phi$ be an irreducible root system in the sense of \cite{Bourbaki4-6}, $\P$ a lattice that is intermediate between the root lattice $\euQ(\Phi)$ and the weight lattice $\P(\Phi)$, $R$ a commutative associative ring with unity, $G(\Phi,R)=G_\P(\Phi,R)$ a Chevalley group of type $\Phi$ over $R$, $T(\Phi,R)=T_\P(\Phi,R)$ a split maximal torus of $G(\Phi,R)$. For every root $\alpha\in\Phi$ we denote by $X_\alpha=\{x_\alpha(\xi),\colon \xi\in R\}$ the corresponding unipotent root subgroup with respect to $T$. We denote by $E(\Phi,R)=E_\P(\Phi,R)$ the elementary subgroup generated by all $X_\alpha$, $\alpha\in\Phi$. 

In the rest of the paper we always assume that $\Phi$ is simply laced unless stated otherwise. 

Let $\Delta$ be a subsystem of $\Phi$. We denote by $E(\Delta,R)$ the subgroup of $G(\Phi,R)$, generated by all~$X_\alpha$, where $\alpha\in \Delta$. It is called an (elementary) {\it subsystem subgroup}. It can be shown that it is an elementary subgroup of a Chevalley group $G(\Delta,R)$, embedded into the group $G(\Phi,R)$. Here the lattice between $\euQ(\Delta)$ and $\P(\Delta)$ is an orthogonal projection of $\P$ onto the corresponding subspace.

We denote by $W(\Phi)$ resp. $W(\Delta)$ the Weyl groups of the systems $\Phi$ resp. $\Delta$.

We are going to describe intermediate subgroups between $E(\Delta,R)$ and $G(\Phi,R)$. The pair $(\Phi,\Delta)$ here should satisfy certain condition, which we for\-mu\-late in Subsection \ref{Condition}.

\subsection{Affine schemes}

The functor $G(\Phi,-)$ from the category of rings to the category of groups is an affine group scheme (a Chevalley--Demazure scheme). This means that its composition with the forgetful functor to the category of sets is representable, i.e.,
$$
G(\Phi,R)=\Hom (\Z[G],R).
$$
The ring $\Z[G]$ here is called the {\it ring of regular functions} on the scheme $G(\Phi,-)$. 

An element $g_{\gen}\in G(\Phi,\Z[G])$ that corresponds to the identity ring ho\-mo\-mo\-rphism is called the {\it generic element} of the scheme $G(\Phi,-)$. This element has a universal property: for any ring $R$ and for any $g\in G(\Phi,R)$, there exists a unique ring homomorphism
$$
\map{f}{\Z[G]}{R},
$$
such that $f_*(g_{\gen})=g$. For details about application of the method of generic elements
to the problems similar to that of ours, see the paper of A.~V.~Ste\-pa\-nov~\cite{StepUniloc}.

\subsection{Group theoretic notation}
\begin{itemize}
	
	\item If the group $G$ acts on the set  $X$, $x\in X$ and $Y\sub X$, we denote by $\Stab_G(x)$ resp. $\Stab_G(Y)$ the stabiliser
	of the element~$x$ resp. the stabiliser of the subset $Y$ (as a subset, not pointwise).
	
	\item Commutators are left normalised:
	$$
	[x,y]=xyx^{-1}y^{-1}.
	$$
	
	\item Upper index stands for the left or right conjugation: 
	$$
	\leftact{g}{h}=ghg^{-1},\quad h^g=g^{-1}hg.
	$$
	
	\item If $X$ is a subset of the group $G$, we denote by $\<X\>$ the subgroup generated by $X$.
	
\end{itemize}

\subsection{Lie algebras}

Let us introduce some notation concerning Lie algebras.

We denote by $L(\Phi,\Z) $ the integer span of the Chevalley basis in the complex Lie algebra of type $\Phi$ (see~\cite{Humphreys}). The following notation stands for Chevalley algebra
 $L(\Phi,R)=L(\Phi,\Z)\ox_\Z R$. This is a Lie algebra over the ring $R$ equipped with an action of the group $G(\Phi,R)$ called the adjoint representation. For elements $g\in G(\Phi,R)$ and $v\in L(\Phi,R)$, we denote this action by $\leftact{g}{v}$.

Note that the algebra $L(\Phi,R)$, in general, is not isomorphic to the tangent Lie algebra of the algebraic group $G(\Phi,R)$ (see, for example, \cite{RoozemondDiss}). However, if the group is simply connected, then these algebras are canonically isomorphic.

\smallskip

We denote by $e_\alpha$, $\alpha\in\Phi$ and $h_i$, $i=1,\ldots,\rk\Phi$ the Chevalley basis of the Lie algebra $L(\Phi,R)$. By $D$ we denote its toric subalgebra generated by~$h_i$.

For every element $v\in L(\Phi,R)$, we denote by $v^\alpha$ and $v^i$ its coefficient in Chevalley basis. 

Lie bracket is denoted by $[\,\cdot\,,\,\cdot\,]$. The same notation stands for the group commutator, but it should be clear from the context whether the calculation are in a group or in a algebra. 

\section{Specific notation}
\label{SecSpecificNotation}

In this section we introduce some specific notation and prove lemmas related to it.

\subsection{Condition on subsystem, blocks, and combinatorial lemma}
\label{Condition}

Let $\Delta$ be a subsystem of the root system $\Phi$. Let $\Delta'$ be a subsystem of the root system $\Phi'$. Both systems $\Phi$ and $\Phi'$ are supposed to be simply laced. Assume that there is an embedding $\Phi'\hookrightarrow \Phi$ as subsystem such that $\Delta\cap\Phi'=\Delta'$ and any root from $\Delta\sm\Delta'$ is orthogonal to any root from $\Phi'$.

It is easy to see that under assumptions above the problem of overgroup description for the subsystem $\Delta\le\Phi$ in some sense includes the similar problem for the subsystem $\Delta'\le\Phi'$. Namely there is an obvious inclusion of the second overgroup lattice into the first one. We will denote such situation by phrase ``the problem for the subsystem $\Delta\le\Phi$ includes the problem for the subsystem $\Delta'\le\Phi'$\,''.

The subsystem $\Delta\le\Phi$ will be supposed to satisfy the following restriction.
\begin{equation}
	\begin{split}
		&\text{The problem for the subsystem } \Delta\le\Phi \text{ does not include}\\ &\text{ problems for subsystems of types } 
		\emp\le A_1 \text{ and } A_1\le A_2.
	\end{split}\tag{$*$}
\end{equation}

The first part of the restriction means just that the system $\Phi$ dose not have roots orthogonal to the whole subsystem $\Delta$.

Note that this restriction is necessary. There are no any reasonable ways to describe the subgroup lattice of the group $\SL(2,R)$. As for the overgroups of $\SL(2,R)$ in $\SL(3,R)$, we do not know any methods that could allow to obtain such description, and probably there is none.

From this moment we will always assume that the condition $(*)$ holds true. 

We say that the roots $\gamma_1$, $\gamma_2$ are equivalent if for any root $\alpha\in\Delta$ we have $(\gamma_1,\alpha)=(\gamma_2,\alpha)$. The equivalence class with respect to this relation will be called {\it block}. Note that any root from the subsystem $\Delta$ is a unique element in its block.  

Subsystems that satisfy the condition $(*)$ have the following properties, which we will use.

\begin{lem}\label{Roots} Assume that subsystem $\Delta\le\Phi$ satisfies the condition $(*)$. Then
	\begin{enumerate}
		\item For any root $\gamma\in\Phi\sm\Delta$ there exists a root $\alpha_1\in\Delta$ such that we have $(\gamma,\alpha_1)=-1$.
		
		\item For any roots $\gamma\in\Phi\sm\Delta$ and $\alpha_1\in\Delta$ such that $(\gamma,\alpha_1)=-1$ there exists a root $\alpha_2\in\Delta$ such that $(\gamma,\alpha_2)=-1$ and either $\alpha_1\perp\alpha_2,$ or $(\alpha_1,\alpha_2)=1$.
		
		\item Any block consists of pairwise orthogonal roots.
		
		\item Any block contains not more than three roots.
\end{enumerate} 
\end{lem}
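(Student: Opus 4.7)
My plan is uniform across the four parts: in each I assume the conclusion fails and extract either a root of $\Phi\sm\Delta$ orthogonal to $\Delta$ (which would embed the problem $\emp\le A_1$) or an $A_2$-sub\-system $\Phi'\sub\Phi$ with $\Delta\cap\Phi'$ of type $A_1$ and every root of $\Delta\sm\Phi'$ orthogonal to $\Phi'$ (which would embed the problem $A_1\le A_2$), contradicting condition~$(*)$. Throughout I use freely that $\Phi$ is simply laced: inner products between distinct non-opposite roots lie in $\{-1,0,1\}$, the value $(\alpha,\beta)=\mp 1$ is equivalent to $\alpha\pm\beta\in\Phi$, and a root-lattice vector of squared norm $2$ is a root. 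Part~(1) is immediate: if no $\alpha\in\Delta$ satisfies $(\gamma,\alpha)=-1$, applying the same to $-\alpha\in\Delta$ also excludes $(\gamma,\alpha)=1$, so $\gamma\perp\Delta$, violating the $\emp\le A_1$ clause of~$(*)$.

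For part~(2) I assume no such $\alpha_2$ exists and set $S:=\{\alpha\in\Delta\colon(\gamma,\alpha)=-1\}$. Any $\alpha_2\in S\sm\{\alpha_1\}$ then satisfies $(\alpha_1,\alpha_2)=-1$, so $\alpha_1+\alpha_2\in\Delta$ and $(\gamma,\alpha_1+\alpha_2)=-2$ forces $\gamma=-(\alpha_1+\alpha_2)\in\Delta$, a contradiction; thus $S=\{\alpha_1\}$. Now put $\Phi':=\{\pm\alpha_1,\pm\gamma,\pm(\alpha_1+\gamma)\}$, an $A_2$-sub\-system. Then $\pm\gamma\notin\Delta$ by hypothesis, and $\pm(\alpha_1+\gamma)\notin\Delta$ since otherwise $-(\alpha_1+\gamma)$ would lie in $S=\{\alpha_1\}$, giving the impossible $\gamma=-2\alpha_1$; so $\Delta\cap\Phi'=\{\pm\alpha_1\}$. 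For $\beta\in\Delta\sm\{\pm\alpha_1\}$: the value $(\beta,\gamma)=\pm 1$ would place $\pm\beta$ into $S\sm\{\alpha_1\}$, so $\beta\perp\gamma$; and $(\beta,\alpha_1)=\pm 1$ would produce $\beta+\alpha_1$ or $\alpha_1-\beta$ in $S=\{\alpha_1\}$, forcing $\beta=0$, so $\beta\perp\alpha_1$. Hence $\beta\perp\Phi'$, violating the $A_1\le A_2$ clause of~$(*)$.

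For part~(3), let $\gamma_1\ne\gamma_2$ lie in a common block and split on $(\gamma_1,\gamma_2)$. If $(\gamma_1,\gamma_2)=-2$ then $\gamma_1=-\gamma_2$ and $(\gamma_1,\alpha)=-(\gamma_1,\alpha)$ yields $\gamma_1\perp\Delta$; if $(\gamma_1,\gamma_2)=1$ then $\gamma_1-\gamma_2\in\Phi$ is orthogonal to $\Delta$ by the block property and cannot be in $\Delta$ (a root is not orthogonal to itself). The main case is $(\gamma_1,\gamma_2)=-1$: here $\delta:=\gamma_1+\gamma_2\in\Phi$ and $(\delta,\alpha)=2(\gamma_1,\alpha)\in\{0,\pm 2\}$, so either $\gamma_1\perp\Delta$ or $\delta=\pm\alpha_0$ for some $\alpha_0\in\Delta$. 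In the latter sub-case, $\gamma_1,\gamma_2\notin\Delta$ (otherwise both would belong to $\Delta$ by closure, violating the singleton property of blocks inside $\Delta$), so $\Phi':=\{\pm\gamma_1,\pm\gamma_2,\pm\alpha_0\}$ has type $A_2$ with $\Delta\cap\Phi'=\{\pm\alpha_0\}$; and for $\beta\in\Delta\sm\{\pm\alpha_0\}$ one has $(\beta,\alpha_0)=2(\beta,\gamma_1)\in\{0,\pm 2\}$ with $\pm 2$ ruled out, giving $\beta\perp\alpha_0,\gamma_1,\gamma_2$, again contradicting~$(*)$.

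Part~(4) is the main technical step and should feature the key trick: a coincidence-of-sums argument using (1) and (2). Suppose four distinct roots $\gamma_1,\gamma_2,\gamma_3,\gamma_4$ lie in one block; then by~(3) they are pairwise orthogonal, and by singleton-ness of blocks inside $\Delta$ none of them lies in $\Delta$. By~(1) applied to $\gamma_1$ followed by~(2), I pick two distinct $\alpha,\alpha'\in\Delta$ with $(\gamma_1,\alpha)=(\gamma_1,\alpha')=-1$; the block property propagates this to $(\gamma_i,\alpha)=(\gamma_i,\alpha')=-1$ for every~$i$. For each $\eta\in\{\alpha,\alpha'\}$ and each pair $k\ne l$ in $\{1,2,3,4\}$, a direct computation gives $(\gamma_k+\gamma_l+\eta,\gamma_k+\gamma_l+\eta)=2$, so $\gamma_k+\gamma_l+\eta\in\Phi$. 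A second direct computation yields $(\gamma_1+\gamma_2+\eta,\gamma_3+\gamma_4+\eta)=-2$, which in a simply laced system forces $(\gamma_1+\gamma_2+\eta)+(\gamma_3+\gamma_4+\eta)=0$, i.e., $\sum_i\gamma_i=-2\eta$. Specializing to $\eta=\alpha$ and $\eta=\alpha'$ yields $\alpha=\alpha'$, the required contradiction.
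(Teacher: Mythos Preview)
Your proof is correct. Parts (1) and (4) follow the paper almost exactly: for~(4), the paper compresses your norm computations into the single line $\alpha=-\tfrac{1}{2}(\gamma_1+\gamma_2+\gamma_3+\gamma_4)$, but the content is identical --- any $\alpha\in\Delta$ with $(\gamma_i,\alpha)=-1$ for all $i$ is forced to equal this fixed vector, so there can be only one, contradicting the supply of two from (1)--(2).

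Parts (2) and (3), however, take a genuinely different route. The paper proves~(2) \emph{constructively}: it picks $\beta\in\Delta\sm\{\pm\alpha_1\}$ not orthogonal to at least one of $\gamma,\alpha_1$, and sets $\alpha_2=\beta$ or $\alpha_2=\beta+\alpha_1$ as the case may be. You instead argue by contradiction, showing that $S=\{\alpha\in\Delta:(\gamma,\alpha)=-1\}$ collapses to $\{\alpha_1\}$ and then exhibiting the $A_2$-triple $\{\pm\alpha_1,\pm\gamma,\pm(\alpha_1+\gamma)\}$ as an embedded $A_1\le A_2$ problem. For~(3), the paper dispatches the cases $(\gamma_1,\gamma_2)=1$ and $(\gamma_1,\gamma_2)=-1$ by appealing back to (1)--(2) to produce two distinct $\alpha$'s, whereas you again construct an explicit $A_1\le A_2$ subproblem in each case. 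Your approach is more uniform (every part is reduced directly to one of the two forbidden configurations in~$(*)$) and makes~(3) independent of~(2); the paper's approach for~(2) has the minor advantage of actually handing you the root $\alpha_2$, which is what later arguments in the paper use.
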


\begin{proof}
	\begin{enumerate}
		\item It follows from the first part of the condition $(*)$ that there exists a root $\alpha_1\in\Delta$ such that $(\gamma,\alpha_1)\ne 0$. Replacing, if necessary, $\alpha_1$ by $-\alpha_1$, we may assume that $(\gamma,\alpha_1)=-1$.
		
		\smallskip
		
		\item  It follows from the second part of the condition $(*)$ that there exists a root $\beta\in\Delta\sm{\pm\alpha_1}$ such that it is not orthogonal at least to one of the roots $\gamma$ and $\alpha_1$. Replacing, if necessary, $\beta$ by $-\beta$, we may assume that its inner product to one of the roots $\gamma$ and $\alpha_1$ is equal to $-1$. Set $\alpha_2=\beta$ if $(\gamma,\beta)=-1$, and $\alpha_2=\beta+\alpha_1$ if $(\gamma,\beta)=0$, but $(\alpha_1,\beta)=-1$.
		
		In any case we have $(\gamma,\alpha_2)=-1$ and $\alpha_1\ne \pm\alpha_2$. It remains to notice that it is impossible that $(\alpha_1,\alpha_2)=-1$. Indeed, in this case  we obtain that $\gamma=-\alpha_1-\alpha_2\in\Delta$. Therefore, we have either $\alpha_1\perp\alpha_2$, or $(\alpha_1,\alpha_2)=1$.
		
		\smallskip
		
		\item Let $\gamma_1$,$\gamma_2\in\Phi$ be distinct equivalent roots. Clearly, they can not belong to the subsystem $\Delta$.
		
		They also can not be opposite to each other. Indeed, let us assume the converse. By the first item there exists a root $\alpha\in\Delta$ such that $(\gamma_1,\alpha)=-1$. Then $(\gamma_2,\alpha)=1\ne(\gamma_1,\alpha)$.
		
		It is impossible that $(\gamma_1,\gamma_2)=1$. Indeed, let us assume the converse and consider two cases. If $\gamma_1-\gamma_2\in\Delta$, then this difference is a root from $\Delta$ that has different inner products with roots $\gamma_1$ and $\gamma_2$. If $\gamma_1-\gamma_2\notin\Delta$, then by first item there exists a root $\alpha\in\Delta$ such that $(\gamma_1-\gamma_2,\alpha)=-1$; hence $(\gamma_2,\alpha)\ne(\gamma_1,\alpha)$.
		
		Finally, it is impossible that $(\gamma_1,\gamma_2)=-1$. Indeed, let us assume the converse. Then for any root $\alpha\in\Delta$ such that $(\gamma_1,\alpha)=-1$, we also have $(\gamma_2,\alpha)=-1$; hence $\alpha=-\gamma_1-\gamma_2$, i.e. there is only one such root. However, by first two items there are at least two such roots. Therefore, we have $\gamma_1\perp\gamma_2$.
		
		\smallskip
		
		\item Assume that $\gamma_1$,$\gamma_2$,$\gamma_3$,$\gamma_4\in\Phi$ are pairwise distinct roots. Clearly, they can not belong to the subsystem $\Delta$. By third item they are pairwise orthogonal. For any root $\alpha\in\Delta$ such that $(\gamma_1,\alpha)=-1$ we also have $(\gamma_i,\alpha)=-1$ for $i=2,3,4$; hence $\alpha=-\tfrac{1}{2}(\gamma_1+\gamma_2+\gamma_3+\gamma_4)$, i.e. there is only one such root. However, by first two items there are at least two such roots. \qedhere		 
	\end{enumerate}
\end{proof}

\begin{rem} One can show that blocks of three roots occur if and only if the problem for the subsystem $\Delta\le\Phi$ includes the problem for the subsystem $A_2\le\D_4$.
\end{rem}

For any root $\gamma\in\Phi$ we denote by $[\gamma]$ the block that contains~$\gamma$. For a root $\alpha\in\Delta$ set $([\gamma],\alpha)=(\gamma,\alpha)$. This is well defined by the definition of block. By $|[\gamma]|$ we denote the number of roots in the block~$[\gamma]$.

\subsection{Lie algebras}

For a block $[\gamma_1]=\{\gamma_i\}_{i=1}^{|[\gamma_1]|}$ we consider the following $R$\nobreakdash-submodule in the Lie algebra $L(\Phi,R)$:
$$
M_{[\gamma_1]}=\bigoplus_i R\cdot e_{\gamma_i}\le L(\Phi,R).
$$

Therefore, we have
$$
L(\Phi,R)=D\oplus\bigoplus_{[\gamma]} M_{[\gamma]}.
$$

For an element $v\in L(\Phi,R)$ we denote by $v^{[\gamma]}$ its component in the module $M_{[\gamma]}$; by $v^d$ we denote its component in Cartan subalgebra $D$.

\subsection{\!Prelevels}

\!Assume that a block $[\gamma_1]\!=\!\{\gamma_i\}_{i\!=\!1}^{|[\gamma_1]|}$ and a root ${\alpha\!\in\!\Delta,}$ are such that $([\gamma_1],\alpha)\!=\!-\!1$. Then the set $[\gamma_1]+\alpha$, i.e. the set $\{\gamma_i+\alpha\}_{i=1}^{|[\gamma_1]|}$, is also a block because it is obtained from the block $[\gamma_1]$ by reflection with respect to the root $\alpha$. Restriction of the adjoint action of the element~$e_\alpha$,
$$
	\map{\ad e_\alpha}{L(\Phi,R)}{L(\Phi,R)},\quad
	x\mapsto [e_\alpha,x],
$$
defines an $R$-linear map
$$
\map{T_{[\gamma_1]\to[\gamma_1]+\alpha}}{M_{[\gamma_1]}}{M_{[\gamma_1]+\alpha}}.
$$

The following definition is the first step to defining objects that classify overgroups in our problem.

\begin{defn}
\label{prelevels} Assume that we are given a collection $\sigma=\{\sigma_{[\gamma]}\}$, where $[\gamma]$ runs through the set of all blocks, and $\sigma_{[\gamma]}$ is an $R$-submodule of the module $M_{[\gamma]}$. Such a collection is called a {\it prelevel} if the following two conditions hold true:

		\noindent(1) for any block $[\gamma]$ and any root $\alpha\in\Delta$ such that $([\gamma],\alpha)=-1$ we have $\sigma_{[\gamma]+\alpha}=T_{[\gamma]\to[\gamma]+\alpha}(\sigma_{[\gamma]})$;
		
		\noindent(2)  for any root $\alpha\in\Delta$ we have $\sigma_{[\alpha]}=M_{[\alpha]}=R\cdot e_{\alpha}$.
	\end{defn}

We will use the following property of the operators $T_{[\gamma]\to[\gamma]+\alpha}$.

\begin{lem}\label{BackAndForth} Let a block $[\gamma]$ and a root $\alpha\in\Delta$ are such that $([\gamma],\alpha)=-1$. Then we have 
	$$
	T_{[\gamma]+\alpha \to[\gamma]}\circ T_{[\gamma]\to[\gamma]+\alpha}=\id_{M_{[\gamma]}}.
	$$
	\vspace{-4ex}
	\end{lem}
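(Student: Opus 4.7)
The plan is to reduce the equality to a Jacobi-identity computation on each basis vector $e_{\gamma_i}$, $\gamma_i\in[\gamma]$, and then invoke the simply-laced hypothesis.

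First I would unwind what $T_{[\gamma]+\alpha\to[\gamma]}$ actually is. By the hypothesis $([\gamma],\alpha)=-1$, so $([\gamma]+\alpha,\alpha)=-1+(\alpha,\alpha)=1$, hence $([\gamma]+\alpha,-\alpha)=-1$ and $([\gamma]+\alpha)+(-\alpha)=[\gamma]$; moreover $-\alpha\in\Delta$. Thus by the preceding construction the map $T_{[\gamma]+\alpha\to[\gamma]}$ is the restriction of $\ad e_{-\alpha}$. So the claim becomes: for every $\gamma_i\in[\gamma]$,
$$
[e_{-\alpha},[e_\alpha,e_{\gamma_i}]]=e_{\gamma_i}.
$$

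Next I would apply the Jacobi identity:
$$
[e_{-\alpha},[e_\alpha,e_{\gamma_i}]]=[[e_{-\alpha},e_\alpha],e_{\gamma_i}]+[e_\alpha,[e_{-\alpha},e_{\gamma_i}]].
$$
The second term vanishes: since $(\gamma_i,\alpha)=([\gamma],\alpha)=-1$ and $\Phi$ is simply laced, $\gamma_i-\alpha$ is not a root (and it is nonzero, as $(\gamma_i,\alpha)=-1\ne 2$), so $[e_{-\alpha},e_{\gamma_i}]=0$. For the first term, the Chevalley-basis relations give $[e_{-\alpha},e_\alpha]=-h_\alpha$ and $[h_\alpha,e_{\gamma_i}]=(\gamma_i,\alpha)\,e_{\gamma_i}=-e_{\gamma_i}$, so $[[e_{-\alpha},e_\alpha],e_{\gamma_i}]=e_{\gamma_i}$, as required. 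Extending by $R$-linearity, $T_{[\gamma]+\alpha\to[\gamma]}\circ T_{[\gamma]\to[\gamma]+\alpha}$ is the identity on $M_{[\gamma]}$.

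There is no real obstacle here; the only care needed is the sign bookkeeping, which the Jacobi trick above handles without explicit recourse to the Chevalley structure constants $N_{\alpha,\gamma_i}$. The key point that makes everything work uniformly across all blocks (including singletons, and including the case $\gamma_i\in\Delta$) is the simply-laced hypothesis on $\Phi$, which forces $\gamma_i-\alpha\notin\Phi$ whenever $(\gamma_i,\alpha)=-1$ and hence kills the unwanted Jacobi summand.
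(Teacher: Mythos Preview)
Your proof is correct and follows essentially the same route as the paper's: both arguments identify $T_{[\gamma]+\alpha\to[\gamma]}$ with the restriction of $\ad e_{-\alpha}$, apply the Jacobi identity, observe that $\ad e_{-\alpha}$ vanishes on $M_{[\gamma]}$ because $\gamma_i-\alpha\notin\Phi$ in the simply laced setting, and then read off the remaining term $-\ad h_\alpha$ acting as the identity on $M_{[\gamma]}$. The only cosmetic difference is that the paper writes the Jacobi identity at the operator level ($\ad e_{-\alpha}\circ\ad e_\alpha=-\ad h_\alpha+\ad e_\alpha\circ\ad e_{-\alpha}$) while you evaluate it on each basis vector $e_{\gamma_i}$.
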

\begin{proof}
	By Jacobi identity we have
	$$
	\ad e_{-\alpha}\circ \ad e_\alpha= -\ad h_{\alpha}+\ad e_{\alpha}\circ \ad e_{-\alpha}.
	$$
	Since the restriction of the operator $\ad e_{-\alpha}$ to the submodule $M_{[\gamma]}$ vanishes, we have
	$$
	T_{[\gamma]+\alpha \to[\gamma]}\circ T_{[\gamma]\to[\gamma]+\alpha}=\restr{-\ad h_{\alpha}}{M_{[\gamma]}}
	$$
	The operator $\ad h_{\alpha}$ acts on the vector $e_{\gamma}$ via multiplication by $-1$, and the same holds true for all roots in the block $[\gamma]$. Therefore, we obtain that
	\begin{equation*}
	T_{[\gamma]+\alpha \to[\gamma]}\circ T_{[\gamma]\to[\gamma]+\alpha}=\id.\qedhere
	\end{equation*}
\end{proof}

\section{Lie overalgebras}
\label{SecOveralgebras}

In this section we describe overalgebras of a subsystem Lie subalgebra. This is the first step towards the description of overgroups. 

\smallskip

Consider an $R$-subalgebra $L'(\Delta,R)\le L(\Phi,R)$ generated by elements $e_{\alpha}$, where $\alpha\in\Delta$. For any $R$-overalgebra $L'(\Delta,R)\le L\le L(\Phi,R)$ we define its level $\lev(L)=\{\lev(L)_{[\gamma]}\}$, where
$$
\lev(L)_{[\gamma]}=M_{[\gamma]}\cap L. 
$$
  
  It is easy to see that $\lev(L)$ is a prelevel. 
  
  We will use the next lemma in order  to describe all $R$-overalgebras of the algebra $L'(\Delta,R)$.
   
   \begin{lem}
	\label{graded} For any $R$-overalgebra $L'(\Delta,R)\le L\le L(\Phi,R)$ the following equality holds true 
	$$
	L=(D\cap L)\oplus\bigoplus_{[\gamma]}\lev(L)_{[\gamma]}.
	$$
\end{lem}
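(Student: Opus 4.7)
The inclusion $\supseteq$ is clear, so I need to prove $\subseteq$: given $v\in L$ with unique decomposition $v=v^d+\sum_{[\gamma]}v^{[\gamma]}$ in $L(\Phi,R)$, show each $v^{[\gamma]}\in L$ (and hence also $v^d\in L\cap D$). The plan is to induct on $|S(v)|:=|\{[\gamma]:v^{[\gamma]}\ne 0\}|$.

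Before inducting I reduce to a convenient shape: for every $\alpha\in\Delta$ one has $M_{[\alpha]}=R\cdot e_\alpha\sub L'(\Delta,R)\sub L$, so $v^{[\alpha]}\in L$ automatically, and I may subtract these summands to assume $v^{[\alpha]}=0$ for all $\alpha\in\Delta$. After this reduction every $[\gamma]\in S(v)$ is a non-$\Delta$-singleton block, hence $([\gamma],\alpha)\in\{-1,0,1\}$ for every $\alpha\in\Delta$.

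The main tools are three $L$-preserving $R$-linear operators attached to each $\alpha\in\Delta$ (they preserve $L$ since $e_{\pm\alpha},h_\alpha\in L'(\Delta,R)\sub L$):
$$
A_\alpha:=\ad e_\alpha\circ\ad e_{-\alpha},\q B_\alpha:=\ad e_{-\alpha}\circ\ad e_\alpha,\q C_\alpha:=(\ad h_\alpha)^2-\id.
$$
A direct root-string computation, in which Lemma~\ref{BackAndForth} is used to conclude that $\ad e_\alpha\circ\ad e_{-\alpha}$ acts as the identity on $M_{[\gamma]}$ whenever $([\gamma],\alpha)=1$ (and likewise $\ad e_{-\alpha}\circ\ad e_\alpha$ for weight $-1$), should yield in the reduced setup
\begin{align*}
A_\alpha(v)&=\alpha(v^d)\,h_\alpha+\sum_{([\gamma],\alpha)=1}v^{[\gamma]},\\
B_\alpha(v)&=\alpha(v^d)\,h_\alpha+\sum_{([\gamma],\alpha)=-1}v^{[\gamma]},\\
-C_\alpha(v)&=v^d+\sum_{([\gamma],\alpha)=0}v^{[\gamma]}.
\end{align*}
The decisive feature is that every surviving $v^{[\gamma]}$ appears with coefficient exactly $+1$.

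Now I fix a block $[\gamma_0]\in S(v)$ and, for each other $[\gamma']\in S(v)$, choose $\alpha_{[\gamma']}\in\Delta$ with $([\gamma_0],\alpha_{[\gamma']})\ne ([\gamma'],\alpha_{[\gamma']})$ (which exists by the very definition of block). Depending on the value $([\gamma_0],\alpha_{[\gamma']})\in\{1,-1,0\}$ I apply to the current element the corresponding operator $A_{\alpha_{[\gamma']}}$, $B_{\alpha_{[\gamma']}}$, or $-C_{\alpha_{[\gamma']}}$: this kills the summand $v^{[\gamma']}$ while keeping $v^{[\gamma_0]}$ with coefficient $+1$. Since the intermediate results are easily checked to retain the property $v_k^{[\pm\beta]}=0$ for every $\beta\in\Delta$ (the newly surviving block components are still of non-$\Delta$-singleton type), the operators compose cleanly. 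After iterating through $S(v)\sm\{[\gamma_0]\}$ one obtains $w=w^d+v^{[\gamma_0]}\in L$ with $w^d\in D$. Lemma~\ref{Roots}(1) then gives $\beta\in\Delta$ with $([\gamma_0],\beta)=\pm 1$, so
$$
[h_\beta,w]=([\gamma_0],\beta)\cdot v^{[\gamma_0]}=\pm v^{[\gamma_0]}\in L,
$$
whence $v^{[\gamma_0]}\in L$. Applying the induction hypothesis to $v-v^{[\gamma_0]}$ completes the argument.

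The main obstacle is precisely the need to keep the coefficient of $v^{[\gamma_0]}$ equal to $\pm 1$ throughout the iteration: a naive polynomial-in-$\ad h_\alpha$ argument would produce only $N\cdot v^{[\gamma_0]}\in L$ for some integer $N$ which is not invertible in $R$ in general. This is circumvented exactly by Lemma~\ref{BackAndForth}, whose identity $T_{[\gamma]+\alpha\to[\gamma]}\circ T_{[\gamma]\to[\gamma]+\alpha}=\id$ prevents any extraneous integer factor from appearing when $A_\alpha$ and $B_\alpha$ extract individual block components.
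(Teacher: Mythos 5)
Your proof is correct, and it takes a genuinely different route than the paper's. The paper also works by reducing the number of nonzero block components, but it does so by applying a single operator $\ad e_\alpha$ (followed by subtracting off the $e_\alpha$-component); this \emph{transports} each surviving block component $v^{[\gamma]}$ with $([\gamma],\alpha)=-1$ to the neighbouring block $[\gamma]+\alpha$, kills everything with $([\gamma],\alpha)\ne -1$, and the count strictly drops because the chosen $\alpha$ kills at least one live block while the transported component stays outside $\lev(L)$ thanks to the injectivity of $T_{[\gamma]\to[\gamma]+\alpha}$ (which, as in your argument, is ultimately Lemma~\ref{BackAndForth}). Your proof instead builds the three operators $A_\alpha=\ad e_\alpha\circ\ad e_{-\alpha}$, $B_\alpha=\ad e_{-\alpha}\circ\ad e_\alpha$, $C_\alpha=(\ad h_\alpha)^2-\id$ that act \emph{diagonally} in the block decomposition: each block component stays put with coefficient $0$ or $1$, and only the Cartan part changes. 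This avoids having to track where components go, yields a constructive ``kill all but $[\gamma_0]$'' iteration, and then a single $\ad h_\beta$ extracts $v^{[\gamma_0]}$ itself. The price is a slightly heavier toolkit (three operators and the explicit case split by $([\gamma_0],\alpha)\in\{1,-1,0\}$) versus the paper's one-line $v'=[e_\alpha,v]-[e_\alpha,v]^\alpha e_\alpha$; the payoff is that the badness-preservation step becomes a triviality (coefficient $1$) rather than an injectivity argument, and the intermediate elements never leave the ``$v^{[\beta]}=0$ for $\beta\in\Delta$'' normal form. Both proofs are short and both rely on the identity from Lemma~\ref{BackAndForth} as the crucial ingredient that prevents integer coefficients other than $\pm1$ from appearing.
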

\begin{proof}
	The right hand side is contained in the left hand side by definition. Let us prove the opposite inclusion. Assume the converse, i.e there exists an element $v\in L$ that does not belong to the right hand side. We may assume that among all such elements the element $v$ is chosen in such a way that the number of blocks $[\gamma]$ such that components $v^{[\gamma]}$ are nonzero is the smallest. Clearly, any such component $v^{[\gamma]}$ does not belong to the submodule $\lev(L)_{[\gamma]}$ because otherwise we would subtract it and reduce the number of nonzero components. In particular, we have $\gamma\notin\Delta$. 
	
	If all the components $v^{[\gamma]}$ vanish, then we have $v\in D$. Hence $v\in D\cap L$, i.e. the element $v$ belongs to the right hand side. Therefore, there is at least one block $\gamma$ such that $v^{[\gamma]}\ne 0$.
	
	If there are at least two such blocks, say $[\gamma_1]$ and $[\gamma_2]$, then since these blocks are distinct, it follows that there exists a root $\alpha\in\Delta$ such that $([\gamma_1],\alpha)\ne ([\gamma_2],\alpha)$. Replacing, if necessary, the root $\alpha$ to its opposite and swapping, if necessary, the roots $\gamma_1$ and $\gamma_2$, we may assume that $([\gamma_1],\alpha)=-1$ and $([\gamma_2],\alpha)\ne -1$. Then taking the element
	$$
	v'=[e_\alpha,v]-[e_{\alpha},v]^\alpha e_\alpha,
	$$ 
	we reduce the number of nonzero components. In addition, we have
	$$
	(v')^{[\gamma_1]+\alpha}=T_{[\gamma_1]\to[\gamma_1]+\alpha}(v^{\gamma})\notin T_{[\gamma_1]\to[\gamma_1]+\alpha}(\lev(L)_{[\gamma]})=\lev(L)_{[\gamma]+\alpha}.
	$$ 
	Hence the element $v'$ does not belong to the right hand side of the equality in question.
	
	It remains to consider the case, where the nonzero component $v^{[\gamma]}$ is unique. Then $v=v^{[\gamma]}+v^d$. Choose a root $\alpha\in\Delta$ such that $([\gamma],\alpha)=-1$. Taking the element
	$$
	v'=[e_\alpha,v]-[e_{\alpha},v]^\alpha e_\alpha,
	$$ 
	we may assume that $v^d=0$. Therefore, we have 
	$$
	v\in M_{[\gamma]}\cap L=\lev(L)_{[\gamma]};
	$$
	hence the element $v$ belongs to the right hand side of the equality in question.
\end{proof}

The following definition is the second step to defining objects that classify overgroups in our problem. 

\begin{defn}
\label{AlmostLevel} A prelevel $\sigma$ is called an {\it almost level} if there exists an $R$-over\-algebra $L'(\Delta,R)\le L\le L(\Phi,R)$ such that $\lev(L)=\sigma$.
\end{defn}

For example, it is easy to see that in case, where all the blocks have one root, almost levels corresponds to nets of ideals (see definition in \cite{Gvoz2A1}).

\smallskip

The next proposition describes all $R$-overalgebras of the algebra 
$
L'(\Delta,R).
$
\begin{prop}
\label{Overalgebras} Let $\sigma$ be an almost level. Then the following statements hold true.
\begin{enumerate}
\item Among all $R$-overalgebras of the algebra $L'(\Delta,R)$ of level $\sigma$ there exist the largest one and the smallest one by inclusion. We denote them by $L_{\min}(\sigma)$ and $L_{\max}(\sigma)$ correspondently\textup;

\item For any $R$-overalgebra $L'(\Delta,R)\le L\le L(\Phi,R)$ of level $\sigma$ the algebra $L_{\max}(\sigma)$ coincides with the normalizer of the algebra $L,$ i.e.
$$
L_{\max}(\sigma)=\{v\in L(\Phi,R)\mid [v,L]\sub L\}.
$$
\end{enumerate}
\end{prop}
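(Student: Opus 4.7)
The argument is driven by Lemma~\ref{graded}: any $R$-overalgebra $L$ of $L'(\Delta,R)$ of level $\sigma$ is forced to have the form $L = (D\cap L)\oplus\bigoplus_{[\gamma]}\sigma_{[\gamma]}$, so the only freedom in choosing such an $L$ lies in the submodule $D\cap L\subseteq D$. The natural extremes are therefore $L_{\min}(\sigma) = \bigcap_i L_i$ and $L_{\max}(\sigma) = \sum_i L_i$, where $\{L_i\}$ is the (nonempty, since $\sigma$ is an almost level) family of all overalgebras of $L'(\Delta,R)$ of level $\sigma$. The intersection is manifestly a Lie subalgebra with $M_{[\gamma]}$-component equal to $\sigma_{[\gamma]}$. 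For the sum, Lemma~\ref{graded} gives $\sum_i L_i = \bigl(\sum_i (D\cap L_i)\bigr)\oplus\bigoplus_{[\gamma]}\sigma_{[\gamma]}$, and closure under bracket reduces to four cases: $[D,D]=0$ by Cartan commutativity; $[w_1,w_2]\in L_i$ for any single index $i$ since the elementary piece $\bigoplus_{[\gamma]}\sigma_{[\gamma]}$ sits inside every $L_i$; and the two mixed brackets $[d,w]$ with $d\in D\cap L_i$, $w\in\bigoplus_{[\delta]}\sigma_{[\delta]}\subseteq L_i$ stay inside $L_i\cap\bigoplus_{[\delta]} M_{[\delta]} = \bigoplus_{[\delta]}\sigma_{[\delta]}$. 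The $M_{[\gamma]}$-component of the sum is again $\sigma_{[\gamma]}$ by a direct summand calculation, and maximality is by construction.

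\textbf{Part (2), first inclusion.} Let $N$ denote the normaliser of a fixed overalgebra $L$ of level $\sigma$; I show $N\subseteq L_{\max}(\sigma)$ by verifying that $N$ is itself an overalgebra of $L'(\Delta,R)$ of level $\sigma$. It is a Lie subalgebra by the standard normaliser argument and contains $L\supseteq L'(\Delta,R)$. To compute $\lev(N)_{[\gamma]} = N\cap M_{[\gamma]}$, the case $\gamma\in\Delta$ is automatic since $\sigma_{[\gamma]} = M_{[\gamma]}$. For $\gamma\notin\Delta$ and $v\in N\cap M_{[\gamma]}$, Lemma~\ref{Roots}(1) supplies a root $\alpha\in\Delta$ with $([\gamma],\alpha)=-1$, and then
$$
T_{[\gamma]\to[\gamma]+\alpha}(v) \;=\; [e_\alpha,v] \;\in\; L\cap M_{[\gamma]+\alpha} \;=\; \sigma_{[\gamma]+\alpha} \;=\; T_{[\gamma]\to[\gamma]+\alpha}(\sigma_{[\gamma]});
$$
the injectivity of $T_{[\gamma]\to[\gamma]+\alpha}$ (immediate from Lemma~\ref{BackAndForth}, which exhibits $T_{[\gamma]+\alpha\to[\gamma]}$ as a left inverse) then forces $v\in\sigma_{[\gamma]}$.

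\textbf{Part (2), second inclusion.} It remains to show $L_{\max}(\sigma)\subseteq N$, i.e.\ $[L_{\max}(\sigma),L]\subseteq L$. Splitting both factors by Lemma~\ref{graded}, the Cartan-Cartan bracket vanishes, the elementary-elementary bracket stays in $L$ since both entries do, and the bracket $[w,d']$ with $w$ elementary and $d'\in D\cap L$ is in $L$ since both are in $L$. The only delicate case is $[d,w]$ with $d\in D\cap L_{\max}(\sigma)$ and $w\in\bigoplus_{[\delta]}\sigma_{[\delta]}$. Here I use the very construction of $L_{\max}(\sigma)$: write $d = \sum_i d_i$ with $d_i\in D\cap L_i$; since $w$ lies in every $L_i$ as well, $[d_i,w]\in L_i\cap\bigoplus_{[\delta]} M_{[\delta]} = \bigoplus_{[\delta]}\sigma_{[\delta]}\subseteq L$, and summing over $i$ finishes the job.

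\textbf{Main obstacle.} The technical heart is the computation $\lev(N) = \sigma$ in part~(2), which has to convert a bracket condition ($[v,L]\subseteq L$) into membership of $v$ itself in $\sigma_{[\gamma]}$. Everything there rests on combining the injectivity of $T_{[\gamma]\to[\gamma]+\alpha}$ coming from Lemma~\ref{BackAndForth} with the prelevel compatibility $\sigma_{[\gamma]+\alpha} = T_{[\gamma]\to[\gamma]+\alpha}(\sigma_{[\gamma]})$ built into Definition~\ref{prelevels}; the rest is bookkeeping on top of the graded decomposition of Lemma~\ref{graded}.
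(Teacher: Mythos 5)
Your proof is correct, but it is organized differently from the paper's. You build $L_{\min}$ as the intersection and $L_{\max}$ as the sum of all overalgebras of level~$\sigma$, verify directly (via Lemma~\ref{graded}) that these are Lie subalgebras of level~$\sigma$, and then establish $N=L_{\max}(\sigma)$ by checking both inclusions separately. The paper instead takes the shortest route: it names the algebra generated by $\bigcup_{[\gamma]}\sigma_{[\gamma]}$ as the minimum, then shows that the normaliser~$N$ of any fixed $L$ of level $\sigma$ is itself an overalgebra of level $\sigma$ (the same $T$-injectivity computation you use) and contains every such overalgebra $L_1$ (by checking only $[D\cap L_1,\sigma_{[\gamma]}]\sub\sigma_{[\gamma]}$, since $L_1$ is closed under the bracket). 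This proves the existence of the maximum and its identification with $N$ in a single stroke, without needing to verify closure of the sum under the bracket. Your explicit description $L_{\max}(\sigma)=\sum_i L_i$ is a little more work (the sum of Lie subalgebras is not a subalgebra in general, so you must invoke the graded decomposition), but it buys a concrete formula; and your separate verification $L_{\max}(\sigma)\sub N$ via $d=\sum_i d_i$ is essentially the paper's ``$L_1\le N$'' argument applied to each summand. Both routes are sound; the paper's is leaner.
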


\begin{proof}
Clearly, the smallest by inclusion overalgebra of level $\sigma$ is an algebra generated by all the elements $\sigma_{[\gamma]}$. Further let us take an arbitrary overalgebra $L'(\Delta,R)\le L\le L(\Phi,R)$ of level $\sigma$ and show that its normalizer, which we temporarily denote by $N$, is the biggest by inclusion overalgebra of level $\sigma$. Therefore, we prove the second part of the first statement and the whole second statement simultaneously.

Firstly, normalizer is always closed with respect to Lie bracket; this well known fact follows easily from Jacobi identity. Secondly, the inclusion $L\le N$ holds; hence the algebra $N$ is an overalgebra of the algebra $L'(\Delta,R)$, and we have $\sigma\le\lev(N)$. Let us prove the opposite inclusion: $\lev(N)\le\sigma$. Let $v\in \lev(N)_{[\gamma]}$. Choose a root $\alpha\in\Delta$ such that $([\gamma],\alpha)=-1$; then we have:
\begin{align*}
T_{[\gamma]\to [\gamma]+\alpha}(v)&=[e_{\alpha},v]\in [L,N]\cap M_{[\gamma]+\alpha}\le L\cap M_{[\gamma]+\alpha}
\\
& = \sigma_{[\gamma]+\alpha}= T_{[\gamma]\to [\gamma]+\alpha}(\sigma_{[\gamma]}).
\end{align*}
Hence $v\in\sigma_{[\gamma]}$. Therefore, $\lev(N)=\sigma$.

Finally, let $L_1$ be another overalgebra of level $\sigma$; let us prove that $L_1\le N$. By Lemma \ref{graded} we have: 
$$
L=(D\cap L)\oplus\bigoplus_{[\gamma]}\sigma_{[\gamma]}, \quad
L_1=(D\cap L_1)\oplus\bigoplus_{[\gamma]}\sigma_{[\gamma]}.
$$ 
We already know that
$$
\bigoplus_{[\gamma]}\sigma_{[\gamma]}\le N.
$$
It remains to check that $D\cap L_1\le N$. Since $[D,D]=0$, it is enough to check that $[D\cap L_1,\sigma_{[\gamma]}]\le \sigma_{[\gamma]}$; but that follows from the fact that $L_1$ is closed with respect to Lie bracket.
\end{proof}

Let $\sigma$ be an almost level. Consider the stabilizer of the subalgebra $L_{\max}(\sigma)$ with respect to the adjoin action of the group $G(\Phi,R)$:
$$
S(\sigma)=\Stab_{G(\Phi,R)}L_{\max}(\sigma).
$$
It is easy to see that in case, where all the blocks have one root, this notation agrees with the similar one given in the paper \cite{Gvoz2A1}.

From the second item of Proposition \ref{Overalgebras} we immediately deduce the following statement.

\begin{cor}
\label{LargestStabilaser} For any $R$-overalgebra $L'(\Delta,R)\le L\le L(\Phi,R)$ of level $\sigma$ we have the following inclusion
$$
\Stab_{G(\Phi,R)}(L)\le S(\sigma).
$$
\end{cor}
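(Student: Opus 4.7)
The plan is to reduce the statement to a purely Lie-theoretic observation: the stabilizer of any Lie subalgebra is automatically contained in the stabilizer of its normalizer. The key input is part (2) of Proposition \ref{Overalgebras}, which identifies $L_{\max}(\sigma)$ with the normalizer of $L$ inside $L(\Phi,R)$, namely
$$
L_{\max}(\sigma)=\{v\in L(\Phi,R)\mid [v,L]\subseteq L\}.
$$
Given this identification, there is essentially nothing left to do beyond invoking the fact that the adjoint action of $G(\Phi,R)$ on $L(\Phi,R)$ is by Lie algebra automorphisms, so conjugation commutes with the Lie bracket.

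Concretely, I would pick an arbitrary $g\in\Stab_{G(\Phi,R)}(L)$ and an arbitrary $v\in L_{\max}(\sigma)$, and verify directly that $\leftact{g}{v}\in L_{\max}(\sigma)$. For any $w\in L$ we have $\leftact{g^{-1}}{w}\in L$ since $g$ stabilises $L$, so $[v,\leftact{g^{-1}}{w}]\in L$ by the characterisation of $L_{\max}(\sigma)$ as the normalizer. Applying $\leftact{g}{(-)}$ and using that it respects the bracket yields
$$
[\leftact{g}{v},w]=\leftact{g}{[v,\leftact{g^{-1}}{w}]}\in \leftact{g}{L}=L,
$$
which is precisely the membership condition for $L_{\max}(\sigma)$. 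Hence $g$ stabilises $L_{\max}(\sigma)$, i.e.\ $g\in S(\sigma)$, proving the desired inclusion.

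I do not anticipate a genuine obstacle here: the corollary is essentially a one-line consequence of Proposition \ref{Overalgebras}(2), the only non-formal ingredient being the well-known fact that $G(\Phi,R)$ acts by Lie algebra automorphisms on $L(\Phi,R)$ via the adjoint representation, which justifies the commutation $[\leftact{g}{v},\leftact{g}{w}]=\leftact{g}{[v,w]}$ used above. This is the reason the author states it as an immediate corollary rather than as a standalone proposition.
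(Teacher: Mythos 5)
Your argument is correct and is exactly the ``immediate'' deduction the paper has in mind: since Proposition~\ref{Overalgebras}(2) identifies $L_{\max}(\sigma)$ with the Lie-algebra normalizer of $L$, and the adjoint action is by Lie algebra automorphisms, any $g$ stabilising $L$ must stabilise its normalizer. The paper simply states the corollary without writing out this computation, so your proposal matches the intended proof.
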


\section{Tandems}
\label{SecTandems}

The notion of a tandem was introduced in the paper \cite{Gvoz2A1}. Let us recall the definition and the properties that we will use. 

\begin{defn} We call an element of the set $G(\Phi,R)\times L(\Phi,R)$ a {\it tandem} if it can be written as $(\leftact{h}{(x_\alpha(\xi))},\leftact{h}{(\xi e_\alpha)})$, where $\alpha\in\Phi$, $\xi\in R$ and $h\in G(\Phi,R)$.
\end{defn}

Actually, it can be shown that the first component of a tandem can be recovered from the second one, but this is inessential for our purposes.

\begin{rem}
	\label{rootchange} 
	For a given root $\beta\in\Phi$ any tandem $(\leftact{h}{(x_\alpha(\xi))},\leftact{h}{(\xi e_\alpha)})$ can be written as $(\leftact{h'}{(x_\beta(\xi'))},\leftact{h'}{(\xi' e_\beta)})$. Moreover, $h'$ can be obtain from $h$ by multiplication by certain element of the extended  Weyl group, and $\xi'=\pm\xi$.
\end{rem}

The proofs of the following lemmas, which help perform calculations with tandems, are written in the paper \cite{Gvoz2A1}. The last one was stated in \cite{Gvoz2A1} not for the arbitrary parabolic subset of roots, but only for those considered in that paper; however, the proof works for the arbitrary one.

\begin{lem}
\label{tandemactpre} The first component of a tandem acts on the element $v\in L(\Phi,R)$ in a following way\textup:
	$$
	\leftact{(\leftact{h}{x_{\alpha}(\xi)})}{v}=v+[\leftact{h}{(\xi e_\alpha)},v]-\xi(\leftact{h^{-1}}{v})^{-\alpha}\cdot\leftact{h}{(\xi e_{\alpha})}.
	$$
	\end{lem}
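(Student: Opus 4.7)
The plan is to establish the formula in two stages. First, I would compute the adjoint action of the ``un-conjugated'' elementary generator $x_{\alpha}(\xi)$ on an arbitrary element $u\in L(\Phi,R)$. Second, I would transfer the formula to the conjugated case by $G$-equivariance of the adjoint action.

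For the first stage, the key observation is that in a simply-laced root system (which the paper assumes throughout) the operator $\ad e_{\alpha}$ is severely truncated on the Chevalley basis: $(\ad e_{\alpha})^{2}$ annihilates every $h_{i}$ and every $e_{\beta}$ with $\beta\ne-\alpha$, while $(\ad e_{\alpha})^{2}e_{-\alpha}=[e_{\alpha},h_{\alpha}]=-2e_{\alpha}$, and $(\ad e_{\alpha})^{3}=0$. Consequently the second divided power $\tfrac{1}{2!}(\ad e_{\alpha})^{2}$ is integral, and the exponential that defines the action of $x_{\alpha}(\xi)$ over $\Q$ collapses, over any commutative ring~$R$, to the polynomial identity
\[
\leftact{x_{\alpha}(\xi)}{u}=u+\xi[e_{\alpha},u]-\xi^{2}u^{-\alpha}e_{\alpha},
\]
where $u^{-\alpha}$ denotes the $e_{-\alpha}$-coefficient of $u$.

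For the second stage, I would use that $G(\Phi,R)$ acts on $L(\Phi,R)$ by Lie algebra automorphisms, so that $\leftact{h}{[x,y]}=[\leftact{h}{x},\leftact{h}{y}]$ and the action is $R$-linear. Rewriting
\[
\leftact{(\leftact{h}{x_{\alpha}(\xi)})}{v}=\leftact{h}{\bigl(\leftact{x_{\alpha}(\xi)}{\leftact{h^{-1}}{v}}\bigr)}
\]
and substituting $u=\leftact{h^{-1}}{v}$ into the first-stage formula, I would push $h$ through each summand: the bracket term becomes $[\leftact{h}{(\xi e_{\alpha})},v]$, and the quadratic term becomes $-\xi(\leftact{h^{-1}}{v})^{-\alpha}\cdot\leftact{h}{(\xi e_{\alpha})}$, where one copy of $\xi$ is absorbed into $\xi e_{\alpha}$ and the other stays outside as a scalar coefficient. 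This is exactly the claimed identity.

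The only nontrivial step is the first one: one must know that simply-laced root strings have length at most two and isolate the single surviving quadratic contribution, which comes from the $e_{-\alpha}$-component. The conjugation step is then purely formal, amounting to a restatement of the fact that the adjoint representation is a representation by Lie algebra automorphisms, so I expect no real obstacle there.
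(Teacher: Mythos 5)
Your proof is correct, and it is the standard and essentially only sensible route: first derive the quadratic-truncated exponential formula $\leftact{x_{\alpha}(\xi)}{u}=u+\xi[e_{\alpha},u]-\xi^{2}u^{-\alpha}e_{\alpha}$ (valid integrally in the simply-laced case because $(\ad e_\alpha)^3=0$ and $\tfrac{1}{2}(\ad e_\alpha)^2$ preserves the Chevalley lattice), then conjugate by $h$ using that the adjoint action is by $R$-linear Lie-algebra automorphisms and absorb one factor of $\xi$ into $\leftact{h}{(\xi e_\alpha)}$. This matches the approach in \cite{Gvoz2A1}, to which the present paper defers for the proof.
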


\begin{lem}
\label{tandemact} Let $(g,l)$ be a tandem, and $\beta\in\Phi$. Then
	$$
	\leftact{g}{e_\beta}=e_\beta+[l,e_\beta]-l^{-\beta} \cdot l.
	$$
\end{lem}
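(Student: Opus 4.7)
The plan is to deduce Lemma~\ref{tandemact} from Lemma~\ref{tandemactpre} by specializing $v=e_\beta$ and then identifying the resulting coefficient. Writing $(g,l)=(\leftact{h}{x_\alpha(\xi)},\leftact{h}{(\xi e_\alpha)})$ and applying Lemma~\ref{tandemactpre} with $v=e_\beta$ yields
$$
\leftact{g}{e_\beta}=e_\beta+[l,e_\beta]-\xi\,(\leftact{h^{-1}}{e_\beta})^{-\alpha}\cdot l.
$$
Since $l=\leftact{h}{(\xi e_\alpha)}$ gives $l^{-\beta}=\xi\,(\leftact{h}{e_\alpha})^{-\beta}$, comparison with the target formula reduces the lemma to the coefficient identity
$$
(\leftact{h^{-1}}{e_\beta})^{-\alpha}=(\leftact{h}{e_\alpha})^{-\beta}.\qquad(\star)
$$

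To prove $(\star)$ I would combine the standard invariant bilinear form on the Chevalley algebra with a generic-point argument. Both sides of $(\star)$ are regular functions on the Chevalley--Demazure scheme $G(\Phi,-)$, so by torsion-freeness of $\Z[G]$ it is enough to verify the identity after base change to $\Q$. Over $\Q$ the Killing form $K$ on $L(\Phi,\Q)$ is symmetric and $G$-invariant, satisfies $K(e_\gamma,e_\delta)=0$ unless $\delta=-\gamma$, and pairs $D$ trivially with each root space. Because $\Phi$ is simply laced, all roots lie in a single Weyl orbit, so $K(e_\gamma,e_{-\gamma})$ takes one and the same nonzero value $c$ for every $\gamma\in\Phi$; consequently $v^{-\gamma}=K(v,e_\gamma)/c$ for every $v\in L(\Phi,\Q)$ and every root $\gamma$, and
$$
c\cdot(\leftact{h^{-1}}{e_\beta})^{-\alpha}=K(\leftact{h^{-1}}{e_\beta},e_\alpha)=K(e_\beta,\leftact{h}{e_\alpha})=c\cdot(\leftact{h}{e_\alpha})^{-\beta},
$$
from which $(\star)$ follows after cancelling $c$ in $\Q$.

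The only delicate step is the descent from $\Q$ back to an arbitrary commutative ring $R$: the cancellation by $c$ is illegitimate if $c$ happens to be a zero divisor in $R$. This is precisely the purpose of the generic-point reduction, which uses the embedding $\Z[G]\hookrightarrow\Q[G]$ to promote an equality of regular functions proved over $\Q$ to one holding universally. I expect this invariance-plus-descent step to be the main obstacle; once it is in place, the rest is a routine bookkeeping exercise with Lemma~\ref{tandemactpre} and the symmetry of $K$.
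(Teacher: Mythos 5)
The paper does not actually prove Lemma~\ref{tandemact} in this source: it defers to \cite{Gvoz2A1}, so there is no local proof to compare yours against. Your argument is nevertheless correct. Reducing via Lemma~\ref{tandemactpre} to the coefficient symmetry $(\leftact{h^{-1}}{e_\beta})^{-\alpha}=(\leftact{h}{e_\alpha})^{-\beta}$ is the right move, and establishing that symmetry from an invariant bilinear form is a valid route: the identity $c\cdot v^{-\gamma}=K(v,e_\gamma)$, constancy of $c=K(e_\gamma,e_{-\gamma})$ across all roots in the simply laced case, and $G$-invariance of $K$ give exactly $(\star)$.

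One simplification you should be aware of: the detour through $\Q$ (and the appeal to torsion-freeness of $\Z[G]$) is unnecessary. The Chevalley form $L(\Phi,\Z)$ already carries a $G$-invariant symmetric bilinear form defined over $\Z$, normalised so that $(e_\gamma,e_{-\gamma})=1$ for every $\gamma\in\Phi$, with the toral part given by Cartan-matrix entries and all other pairings zero; the Killing form is a fixed integer multiple of it. Invariance under the Lie bracket (hence under all $x_\alpha(t)$, hence under $G(\Phi,R)$) is a polynomial identity in the structure constants, so it holds over any $R$. With this form, $v^{-\gamma}=(v,e_\gamma)$ holds on the nose and $(\star)$ drops out of invariance directly, with no constant to cancel and no generic-point reduction needed. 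Finally, a minor phrasing point about the version you wrote: the verification cannot literally happen "over $\Q$" since $h$ ranges over $G(\Phi,R)$ for varying $R$; what you mean is to evaluate at $g_{\gen}$ so that both sides of $(\star)$ become elements of $\Z[G]$, and then check equality in the $\Q$-algebra $\Q[G]=\Z[G]\otimes\Q$. The Killing-form argument is a $\Q$-algebra argument, so this works, but the two steps (specialising to $g_{\gen}$, then inverting integers) should be kept distinct.
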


\begin{lem}
\label{tandemsinparabolic} Let $\mathfrak{S}\sub\Phi$ be a parabolic subset of roots. Let $P_{\mathfrak{S}}$ be the corresponding parabolic subgroup. Let $(g,l)$ be a tandem such that $l$ belongs to the Lie algebra $L_{\mathfrak{S}}$, where
	$$
	L_{\mathfrak{S}}=D\oplus\bigoplus_{\alpha\in\mathfrak{S}} R\cdot e_\alpha\le L(\Phi,R).
	$$
	Then $g\in P_{\mathfrak{S}}$.
	\end{lem}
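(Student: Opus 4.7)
The plan is to use Lemmas~\ref{tandemact} and~\ref{tandemactpre} to verify that the adjoint action of $g$ preserves $L_{\mathfrak{S}}$, and then to identify elements of $G(\Phi,R)$ that stabilize $L_{\mathfrak{S}}$ with the parabolic subgroup $P_{\mathfrak{S}}$. To begin, one checks that $L_{\mathfrak{S}}$ is itself a Lie subalgebra of $L(\Phi,R)$: the parabolicity of $\mathfrak{S}$ gives closedness under root addition, so $[e_\alpha, e_\beta] \in L_{\mathfrak{S}}$ whenever $\alpha,\beta \in \mathfrak{S}$, while the brackets $[D,D]$, $[D,e_\alpha]$, and $[e_\alpha,e_{-\alpha}]$ (for $\pm\alpha\in\mathfrak{S}$) lie in $L_{\mathfrak{S}}$ for trivial reasons.

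Next, for any $\beta \in \mathfrak{S}$, Lemma~\ref{tandemact} gives
\[
\leftact{g}{e_\beta} = e_\beta + [l,e_\beta] - l^{-\beta}\cdot l,
\]
and every summand lies in $L_{\mathfrak{S}}$: the first two because $l,e_\beta \in L_{\mathfrak{S}}$ and $L_{\mathfrak{S}}$ is closed under the Lie bracket, the third because it is an $R$-multiple of $l$. For $v \in D$, Lemma~\ref{tandemactpre} produces an analogous expression whose summands again lie in $L_{\mathfrak{S}}$, since each is either an element of $D$, a bracket of two elements of $L_{\mathfrak{S}}$, or a scalar multiple of $l$. Because $L_{\mathfrak{S}}$ is generated as an $R$-module by $D$ together with $\{e_\beta\}_{\beta\in\mathfrak{S}}$, we conclude $\leftact{g}{L_{\mathfrak{S}}} \subseteq L_{\mathfrak{S}}$.

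It remains to pass from ``$g$ normalizes $L_{\mathfrak{S}}$ in the adjoint representation'' to the conclusion $g \in P_{\mathfrak{S}}$. This is the main obstacle: over an algebraically closed field it is the classical recognition theorem identifying the parabolic subgroup with the stabilizer of its standard Lie subalgebra, but over a general commutative ring $R$ additional care is needed. The cleanest route is the method of generic elements in the spirit of Stepanov~\cite{StepUniloc}: reduce to the universal situation in which the tandem comes from $h = g_{\gen} \in G(\Phi,\Z[G])$, and then verify the conclusion at the $\Q$-points of $\Spec \Z[G]$, exploiting the rigidity of the tandem structure---namely, that $g$ is the conjugate of a single root unipotent, so its Bruhat coordinates are severely constrained---to descend the result uniformly to every ring $R$.
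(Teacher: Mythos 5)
Your Steps 1 and 2 are fine: $L_{\mathfrak{S}}$ is closed under the bracket, and the formulas of Lemmas~\ref{tandemact}/\ref{tandemactpre} do show $\leftact{g}{L_{\mathfrak{S}}}\subseteq L_{\mathfrak{S}}$. But Step 3 --- inferring $g\in P_{\mathfrak{S}}$ from ``$g$ stabilizes $L_{\mathfrak{S}}$'' --- is a genuine gap, and the generic-element patch you sketch does not close it. The problem is that the scheme-theoretic stabilizer of $L_{\mathfrak{S}}$ in $G(\Phi,-)$ is \emph{strictly larger} than $P_{\mathfrak{S}}$ over non-reduced rings. A concrete example in $\mathrm{SL}_2$: over $R=\F_2[\eps]/(\eps^2)$, the element $g=x_{-\alpha}(\eps)$ satisfies $\leftact{g}{e_\alpha}=e_\alpha-\eps h_\alpha$ and $\leftact{g}{h_\alpha}=h_\alpha+2\eps e_{-\alpha}=h_\alpha$, so $\Ad(g)$ preserves the Borel subalgebra, yet $g$ does not lie in the Borel subgroup. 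Thus ``stabilizes $L_{\mathfrak{S}}$'' alone can never yield the conclusion: you must use the tandem structure, and in your argument the tandem structure is invoked only as a phrase (``rigidity''), never as a step. (In the example above the tandem hypothesis is \emph{not} satisfied --- $l=\eps e_{-\alpha}\notin L_{\mathfrak{S}}$ --- which is exactly the information your argument discards.)

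The proposed generic-element descent is also not tight. Restricting to the closed subscheme of $\Spec\Z[G][\xi]$ cut out by the conditions $l^\beta=0$, $\beta\notin\mathfrak{S}$, one needs the \emph{ideal-theoretic} inclusion (the defining ideal of $P_{\mathfrak{S}}$ must lie inside the ideal $(l^\beta\colon\beta\notin\mathfrak{S})$), since the lemma is asserted for arbitrary commutative $R$, including non-reduced ones. Checking on $\Q$-points, or on geometric points of any characteristic, only gives containment after taking radicals; to promote this to the required ideal inclusion you would need the quotient $\Z[G][\xi]/(l^\beta)$ to be reduced, which you do not establish (and which is not obvious). The paper itself flags the danger here with the Warning immediately after the lemma: $L_{\mathfrak{S}}$ is \emph{not} the tangent algebra of $P_{\mathfrak{S}}$ when $G$ is not simply connected, so any argument that implicitly identifies $P_{\mathfrak{S}}$ with the infinitesimal normalizer of $L_{\mathfrak{S}}$ is suspect. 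The route actually used (in~\cite{Gvoz2A1}, to which the paper defers) avoids the normalizer entirely and exploits the tandem relation directly; a robust way to see this is to pass to a representation $\pi$ in which $P_{\mathfrak{S}}$ is the stabilizer of a flag of direct summands and in which $\pi(g)$ is an integral polynomial expression in $d\pi(l)$ with no denominators (e.g.\ $\pi(g)=1+d\pi(l)$ for the vector representation of $\SL_n$); then $l\in L_{\mathfrak{S}}$ immediately forces $\pi(g)$ to preserve the flag, hence $g\in P_{\mathfrak{S}}$, with no appeal to the adjoint stabilizer and no need for descent.
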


\begin{warn}
 If the group $G(\Phi,R)$ is not simply connected, then the Lie algebra $L_{\mathfrak{S}}$ is not the same as the tangent Lie algebra $\Lie(P_{\mathfrak{S}})$ of the subgroup~$P_{\mathfrak{S}}$.
\end{warn}

\section{The condition on the ground ring}
\label{SecCOnditionOnRing}

For some cases we have to impose the following restriction on the ring $R$. 
 \begin{equation}
\begin{split}
\text{The ring } R \text{ does not have a residue field } \F_2.
\end{split}\tag{$**$}
\end{equation}
From this moment we will always assume that if there is a block of three roots, then the condition $(**)$ holds true. Later we will state more precisely in which cases this condition is required. 

\smallskip

The following lemma holds for rings that satisfy the above condition. 

\begin{lem}
\label{EliminateSquareTerm} Assume that the condition $(**)$ holds true. Let $M$ be a module over the ring $R$; and let $x,$$y\in M$. The the submodule generated by all the elements $tx+t^2y,$ where $t$ runs through $R,$ contains the element $x$.
\end{lem}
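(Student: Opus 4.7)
The plan is to show that the submodule $N$ generated by the elements $f(t) := tx + t^2y$, $t \in R$, contains $y$; once this is done, $x = f(1) - y$ lies in $N$ as well. So the whole task reduces to isolating a "pure $y$"-element in $N$, and for that I would exploit the fact that $N$ is closed under $R$-linear combinations (not just additive).

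First I would polarise to kill the linear-in-$t$ part. For any $s,t \in R$,
$$
f(t+s) - f(t) - f(s) = \bigl((t+s)^2 - t^2 - s^2\bigr)\,y = 2ts\,y,
$$
so $2ts\,y \in N$ for all $s,t \in R$; taking $s = 1$ gives $2t\,y \in N$ for every $t \in R$. Next, since $N$ is an $R$-submodule, $t\,f(1) = tx + ty \in N$, and subtracting this from $f(t)$ yields
$$
f(t) - t\,f(1) = (t^2 - t)\,y \in N \qquad \text{for every } t \in R.
$$
Consequently, $N$ contains $r\,y$ for every $r$ in the ideal $I \subseteq R$ generated by $2$ and by all the elements $t^2 - t$, $t \in R$.

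The key observation is that condition $(**)$ is precisely the statement that $I = R$. Indeed, if $I$ were contained in a maximal ideal $\fr{m}$, then in the field $R/\fr{m}$ we would have $\bar 2 = 0$ and $\bar t^2 = \bar t$ for every $\bar t$; but the equation $X^2 - X = 0$ has at most two roots in a field, forcing $R/\fr{m} = \F_2$, which contradicts $(**)$. Writing $1 = 2 a_0 + \sum_i a_i (t_i^2 - t_i)$ and multiplying by $y$, we obtain $y \in N$, whence $x = f(1) - y \in N$. The only non-routine point in this argument is the translation of $(**)$ into the ideal-theoretic statement $I = R$; everything else is a short polarisation computation, so this is the step I would single out as the main (and really the only) obstacle.
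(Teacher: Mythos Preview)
Your proof is correct and follows essentially the same line as the paper's: reduce the claim to the ideal-theoretic statement that the elements $t^2\pm t$ generate the unit ideal, and translate that into condition $(**)$ via a maximal-ideal argument. The only difference is cosmetic: the paper first isolates $x-y\in M_0$ (from $t=-1$) and then shows $(t+t^2)x\in M_0$, extracting $x$ directly, whereas you extract $y$ first and then recover $x=f(1)-y$. Your polarisation step is in fact redundant, since $2=(-1)^2-(-1)$ already appears among the elements $t^2-t$, so the ideal generated by the $t^2-t$ alone is your $I$.
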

\begin{proof}
Let us denote this submodule by $M_0$. Taking $t=-1$ we obtain $x-y\in M_0$. Further for an arbitrary $t\in R$ we have
$$
(t+t^2)x=tx+t^2y+t^2(x-y)\in M_0.
$$
It remains to notice that the elements $(t+t^2)\in R$ generate the unit ideal. Indeed, otherwise, the ideal generated by them is contained in some maximal ideal, and in the corresponding residue field all elements are roots of the equation $t^2+t=0$, i.e this residue field is $\F_2$. 
\end{proof}

\section{Level computation}
\label{SecLelelComp}

In this section we introduce two definitions for the level of an overgroup. Essential part of the present paper is the proof of their coincidence. 

\subsection{Elementary level}

Consider a block $[\gamma_1]=\{\gamma_i\}_{i=1}^{|[\gamma_1]|}$, and an element $a=\sum_i \xi_ie_{\gamma_i} \in M_{[\gamma_1]}$. Set
$$
x_{[\gamma_1]}(a)=\prod_i x_{\gamma_i}(\xi_i).
$$
This definition does not depend on the order of the roots $\gamma_i$ because by Lemma~\ref{Roots} these roots are orthogonal.

For any overgroup $E(\Delta,R)\le H\le G(\Phi,R)$ we define its {\it elementary level} as follows: $\ellev(H)=\{\ellev(H)_{[\gamma]}\}$, where
$$
\ellev(H)_{[\gamma]}=\{a\in M_{[\gamma]}\colon x_{[\gamma]}(a)\in H\}. 
$$

Let us prove the following proposition.

\begin{prop}
\label{EllevIsPrelev} For any overgroup $E(\Delta,R)\le H\le G(\Phi,R)$ its elementary level $\ellev(H)$ is a prelevel.
\end{prop}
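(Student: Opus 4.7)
The proposition requires checking three things about $\ellev(H) = \{\ellev(H)_{[\gamma]}\}$: that each $\ellev(H)_{[\gamma]}$ is an $R$-submodule of $M_{[\gamma]}$, that $\ellev(H)_{[\alpha]} = M_{[\alpha]}$ for $\alpha \in \Delta$, and that $\ellev(H)_{[\gamma]+\alpha} = T_{[\gamma]\to[\gamma]+\alpha}(\ellev(H)_{[\gamma]})$ whenever $([\gamma],\alpha) = -1$. The middle assertion is immediate: for $\alpha \in \Delta$ the block $[\alpha]$ reduces to a single root and every $x_\alpha(\xi)$ lies in $E(\Delta,R) \le H$. Closure of $\ellev(H)_{[\gamma]}$ under addition is also immediate from Lemma~\ref{Roots}(3), which states that the roots of a single block are pairwise orthogonal; hence the factors $x_{\gamma_i}(\xi_i)$ in $x_{[\gamma]}(a)$ commute and we have $x_{[\gamma]}(a+b) = x_{[\gamma]}(a)\,x_{[\gamma]}(b)$ and $x_{[\gamma]}(-a) = x_{[\gamma]}(a)^{-1}$.

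For the intertwining condition my main tool is conjugation by the Weyl representative $w_\alpha = x_\alpha(1)\,x_{-\alpha}(-1)\,x_\alpha(1) \in E(\Delta,R) \le H$. The reflection $s_\alpha$ sends $[\gamma]$ bijectively onto the block $[\gamma]+\alpha$, whose roots are again pairwise orthogonal, so $w_\alpha \cdot x_{[\gamma]}(a) \cdot w_\alpha^{-1}$ collapses to a single element of the form $x_{[\gamma]+\alpha}(b)$ for some $b \in M_{[\gamma]+\alpha}$. A short Chevalley-basis calculation inside each rank-2 subsystem spanned by $\alpha$ and an individual $\gamma_i$ (simply laced, so the signs arising in Weyl conjugation are tied to the structure constants in a fixed way) shows that the coefficient of $e_{\gamma_i+\alpha}$ in $b$ coincides with $N_{\alpha,\gamma_i}\xi_i$, which is precisely the coefficient appearing in $T_{[\gamma]\to[\gamma]+\alpha}(a)$. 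Thus $b = T_{[\gamma]\to[\gamma]+\alpha}(a)$, giving $T(\ellev(H)_{[\gamma]}) \subseteq \ellev(H)_{[\gamma]+\alpha}$; the reverse inclusion is free, since applying the same argument to the pair $([\gamma]+\alpha,-\alpha)$ and invoking Lemma~\ref{BackAndForth} turns it around.

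It remains to show closure of $\ellev(H)_{[\gamma]}$ under multiplication by an arbitrary $r \in R$. For this I would consider the commutator $[x_\alpha(r), x_{[\gamma]}(a)] \in H$. Expanded via the Chevalley commutator formula it becomes a product of root elements whose mutual commutativity is guaranteed by Lemma~\ref{Roots} combined with the fact that in a simply laced system, two roots $\beta,\beta'$ with $(\beta,\beta')=1$ have $\beta+\beta'$ not a root. The factor lying in the block $[\gamma]+\alpha$ turns out to be exactly $x_{[\gamma]+\alpha}(r \cdot T(a))$, while the remaining factors involve roots from distinct blocks and commute with it. Transporting $x_{[\gamma]+\alpha}(r \cdot T(a)) \in H$ back through $w_\alpha^{-1}$-conjugation yields $x_{[\gamma]}(r \cdot a) \in H$, so $r \cdot a \in \ellev(H)_{[\gamma]}$. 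The principal technical obstacle is the bookkeeping of the cross-block contributions coming from sums $\alpha+\gamma_i+\gamma_j$: in blocks of size three (where the remark after Lemma~\ref{Roots} warns that additional interactions appear) I expect the spurious factors to be killed by Lemma~\ref{EliminateSquareTerm} in combination with the condition $(**)$, which is exactly why $(**)$ is imposed precisely when three-root blocks occur.
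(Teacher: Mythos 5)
Your decomposition of the proof (Weyl conjugation $w_\alpha$ for the intertwining identity, then a commutator $[x_\alpha(r),x_{[\gamma]}(a)]$ for $R$-closure) is organised differently from the paper, which proves the single inclusion $R\cdot T_{[\gamma]\to[\gamma]+\alpha}(\ellev(H)_{[\gamma]})\le\ellev(H)_{[\gamma]+\alpha}$ by a case analysis on $|[\gamma]|$. The Weyl-conjugation step is a reasonable alternative in spirit (modulo the sign comparison between $w_\alpha$-conjugation and $T_{[\gamma]\to[\gamma]+\alpha}$, which you rightly flag), but the real technical content of the proposition sits in the $R$-closure step, and there your sketch has a genuine gap. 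The commutator $[x_\alpha(r),x_{[\gamma]}(a)]\in H$ equals $x_{[\gamma]+\alpha}(r\,T(a))$ only up to cross-block factors, and those factors are \emph{not} automatically in $H$; concluding $x_{[\gamma]+\alpha}(r\,T(a))\in H$ requires disposing of them first.

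Already for $|[\gamma]|=2$, with $a=\xi_1 e_{\gamma_1}+\xi_2 e_{\gamma_2}$, the commutator produces $x_{\gamma_1+\gamma_2+\alpha}(\pm r\xi_1\xi_2)$, so one must first establish $\xi_1\xi_2\in I_{\gamma_1+\gamma_2+\alpha}$. The paper does this with a separate preliminary step: the nested commutator $\bigl[x_{-\alpha}(1),[x_{\alpha}(1),x_{[\gamma]}(a)]\bigr]$ equals $x_{[\gamma]}(a)\,x_{\gamma_1+\gamma_2+\alpha}(\pm\xi_1\xi_2)$, from which the ideal membership follows because the other two factors lie in $H$. This step is missing from your proposal, which addresses the cross-block obstruction only for $|[\gamma]|=3$ and only via an anticipated appeal to Lemma~\ref{EliminateSquareTerm} and $(**)$; that is consistent with the theorem's hypotheses (the two-root case must work without $(**)$), but it leaves the two-root case unjustified. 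For $|[\gamma]|=3$ the gap is wider still: the paper first shows $\gamma_1+\gamma_2+\gamma_3+\alpha$ and $\gamma_1+\gamma_2+\gamma_3+2\alpha$ lie in $\Delta$, then introduces the $R$-submodule $A\le R^{6}$ of tuples whose associated one-parameter product lies in $H$, and runs a chain of \emph{three} commutators to produce for every $s\in R$ a tuple of the form $(s\xi_1,s\xi_2,s\xi_3,\pm s^{2}\xi_1\xi_2,\pm s^{2}\xi_1\xi_3,\pm s^{2}\xi_2\xi_3)\in A$, and only then invokes Lemma~\ref{EliminateSquareTerm}. A single commutator does not yield the required ``$s$ versus $s^{2}$'' structure; the iterated commutators are precisely what manufacture it.
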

\begin{proof}
	For any root $\gamma\in \Phi$ we set
	$$
	I_{\gamma}=\{\xi\in R\colon x_{\gamma}(\xi)\in H\}.
	$$
	It follows from the paper \cite{VSch} that the sets $I_\gamma$ are ideals of the ring $R$ and that the ideal $I_\gamma$ depend only on the orbit of the root $\gamma$ under the action of the Weyl group $W(\Delta)$.
	
	Now we get to the proof. Firstly, the subsets $\ellev(H)_{[\gamma]}\sub M_{[\gamma]}$ are clearly additive subgroups. Secondly, it is also clear that for any root $\alpha\in\Delta$ we have 
	$$
	\ellev(H)_{[\alpha]}=M_{[\alpha]}.
	$$
	
	Further let us assume that the block $[\gamma]$ and the root $\alpha\in\Delta$ are such that $([\gamma],\alpha)=-1$. We prove the following inclusion
	$$
	R\cdot T_{[\gamma]\to [\gamma]+\alpha}(\ellev(H)_{[\gamma]})\le \ellev(H)_{[\gamma]+\alpha}.
	$$ 
	It is easy to see that this will be enough to finish the proof of the proposition (follows from the first item of Lemma \ref{Roots} and Lemma \ref{BackAndForth}).
	
	As we remember from Lemma \ref{Roots}, the block $[\gamma]$ may contain one, two, or three roots.
	
	\smallskip
	
	\noindent{\bf Case 1:} $|[\gamma]|=1$.
	
	In this case we have 
	$$
	\ellev(H)_{[\gamma]}=I_{\gamma}e_{\gamma},
	$$
	and similarly 
	$$
	\ellev(H)_{[\gamma]+\alpha}=I_{\gamma+\alpha}e_{\gamma+\alpha}.
	$$
	The statement in question now follows from the reference above to the paper~\cite{VSch}.
	
	\smallskip
	
	 \noindent{\bf Case 2:} $|[\gamma]|=2$.
	 
	 Let $[\gamma]=\{\gamma_1,\gamma_2\}$. As we remember from Lemma \ref{Roots}, the roots $\gamma_1$ and $\gamma_2$ are orthogonal. Take an element $a=\xi_1e_{\gamma_1}+\xi_2e_{\gamma_2}\in \ellev(H)_{[\gamma]}$ and an element $\zeta\in R$. First note that $\xi_1\xi_2\in I_{\gamma_1+\gamma_2+\alpha}$. Indeed, applying Chevalley commutator formula we get
	 $$
	 \big[x_{-\alpha}(1),[x_{\alpha}(1),x_{[\gamma]}(a)]\big]=x_{[\gamma]}(a)x_{\gamma_1+\gamma_2+\alpha}(\pm\xi_1\xi_2).
	 $$
	Here we used Lemma \ref{BackAndForth} in order to determine signs that have components of the element $a$ in the right hand side of the equality. Further since the left hand side and the element $x_{[\gamma]}(a)$ belong to the subgroup $H$, it follows that $x_{\gamma_1+\gamma_2+\alpha}(\pm\xi_1\xi_2)\in H$, i.e. $\xi_1\xi_2\in I_{\gamma_1+\gamma_2+\alpha}$.
	
	Further consider the equality
	$$
	[x_{\alpha}(\zeta),x_{[\gamma]}(a)]=x_{[\gamma]+\alpha}\left(\zeta T_{[\gamma]\to[\gamma]+\alpha}(a)\right)x_{\gamma_1+\gamma_2+\alpha}(\pm\zeta\xi_1\xi_2).
	$$
	Its left hand side belongs to the subgroup $H$; and since $I_{\gamma_1+\gamma_2+\alpha}$ is an ideal, it follows that $x_{\gamma_1+\gamma_2+\alpha}(\pm\zeta\xi_1\xi_2)\in H$. Hence we have $x_{[\gamma]+\alpha}\left(\zeta T_{[\gamma]\to[\gamma]+\alpha}(a)\right)\in H$, and $\zeta T_{[\gamma]\to[\gamma]+\alpha}(a)\in \ellev(H)_{[\gamma]+\alpha}$.
	
	\smallskip
	
	\noindent{\bf Case 3:} $|[\gamma]|=3$.
	
	Let us recall that if there is a block of three roots, then we assume the condition $(**)$.
	
	Let $[\gamma]=\{\gamma_1,\gamma_2,\gamma_3\}$. Note that in this case the roots $\gamma_1+\gamma_2+\gamma_3+\alpha$ and $\gamma_1+\gamma_2+\gamma_3+2\alpha$ belong to the subsystem $\Delta$. Indeed, by item2 of Lemma \ref{Roots}, there is a root $\beta\in\Delta$ such that $([\gamma],\beta)=-1$ and either $\alpha\perp\beta$, or $(\alpha,\beta)=1$. The first case is impossible because in this case the matrix of the inner products of the roots $\gamma_1$,$\gamma_2$,$\gamma_3$,$\alpha$,$\beta$ is indefinite. Hence we have $(\alpha,\beta)=1$. Then the mentioned above matrix of inner products is positively semi-definite; and we can point out the dependence between these roots: 
	$$
	\beta+\gamma_1+\gamma_2+\gamma_3+\alpha=0.
	$$
	Thus we have 
	$$
	\gamma_1+\gamma_2+\gamma_3+\alpha=-\beta\in\Delta\quad\text{and}\quad \gamma_1+\gamma_2+\gamma_3+2\alpha=\alpha-\beta\in\Delta.
	$$ 
	
Further we set:
	\begin{align*}
	A&=\{v\in R^6\colon\forall\,t\in R\, x_{\gamma_1+\alpha}(tv_1)
	\\
	&\times\, x_{\gamma_2+\alpha}(tv_2)x_{\gamma_3+\alpha}(tv_3)x_{\gamma_1+\gamma_2+\alpha}(tv_4)
	 x_{\gamma_1+\gamma_3+\alpha}(tv_5) x_{\gamma_2+\gamma_3+\alpha}(tv_6)\in H\}.
	\end{align*}
	The set $A$ is an additive subgroup in $R^6$ because the root elements in its definition commute modulo the root subgroup of the root $\gamma_1+\gamma_2+\gamma_3+2\alpha$, which is contained in the subgroup $H$. It is also clear that $A=RA$, i.e. it is an  $R$-submodule. 
	
	Take an element $a=\xi_1e_{\gamma_1}+\xi_2e_{\gamma_2}+\xi_3e_{\gamma_3}\in \ellev(H)_{[\gamma]}$ and elements $s$,$t\in R$.	
	
	From the equality
	\begin{align*}
	[x_{\alpha}(1),x_{[\gamma]}(a)]&=x_{[\gamma]+\alpha}\left(T_{[\gamma]\to[\gamma]+\alpha}(a)\right)
	\\
	&\times\,x_{\gamma_1+\gamma_2+\alpha}(\pm\xi_1\xi_2)x_{\gamma_1+\gamma_3+\alpha}(\pm\xi_1\xi_3)x_{\gamma_2+\gamma_3+\alpha}(\pm\xi_2\xi_3)
	\\
	&\times\,x_{\gamma_1+\gamma_2+\gamma_3+\alpha}(\pm\xi_1\xi_2\xi_3)
	x_{\gamma_1+\gamma_2+\gamma_3+2\alpha}(\pm\xi_1\xi_2\xi_3)
	\end{align*}
	we obtain that
	\begin{align*}
	g_1&=x_{[\gamma]+\alpha}\left(T_{[\gamma]\to[\gamma]+\alpha}(a)\right)
	\\
	&\times\,x_{\gamma_1+\gamma_2+\alpha}(\pm\xi_1\xi_2)
	x_{\gamma_1+\gamma_3+\alpha}(\pm\xi_1\xi_3)x_{\gamma_2+\gamma_3+\alpha}(\pm\xi_2\xi_3)\in H.
	\end{align*}
	Further from the equality
	\begin{align*}	
	[x_{-\alpha}(s),g_1]&=[x_{-\alpha}(s),x_{[\gamma]+\alpha}\left(T_{[\gamma]\to[\gamma]+\alpha}(a)\right)]
	\\
	&=x_{[\gamma]}(sa)x_{\gamma_1+\gamma_2+\alpha}(\pm s\xi_1\xi_2)x_{\gamma_1+\gamma_3+\alpha}(\pm s\xi_1\xi_3)x_{\gamma_2+\gamma_3+\alpha}(\pm s\xi_2\xi_3)\\
	&\times\,
	x_{\gamma_1+\gamma_2+\gamma_3+2\alpha}(\pm s\xi_1\xi_2\xi_3)x_{\gamma_1+\gamma_2+\gamma_3+\alpha}(\pm s^2\xi_1\xi_2\xi_3)
	\end{align*}
	we obtain that
	$$
	g_2=x_{[\gamma]}(sa)x_{\gamma_1+\gamma_2+\alpha}(\pm s\xi_1\xi_2)x_{\gamma_1+\gamma_3+\alpha}(\pm s\xi_1\xi_3)x_{\gamma_2+\gamma_3+\alpha}(\pm s\xi_2\xi_3)\in H.
	$$
	Similarly, from the equality
	\begin{align*}	
	[x_{\alpha}(t),g_2]&=[x_{\alpha}(t),x_{[\gamma]}(sa))]
		\\
		&=x_{[\gamma]+\alpha}\left(tsT_{[\gamma]\to[\gamma]+\alpha}(a)\right)x_{\gamma_1+\gamma_2+\alpha}(\pm ts^2\xi_1\xi_2) x_{\gamma_1+\gamma_3+\alpha}(\pm ts^2\xi_1\xi_3)
		\\
	&\times\,x_{\gamma_2+\gamma_3+\alpha}(\pm ts^2\xi_2\xi_3)
	x_{\gamma_1+\gamma_2+\gamma_3+\alpha}(\pm ts^3\xi_1\xi_2\xi_3)
	\\
	&\times\,x_{\gamma_1+\gamma_2+\gamma_3+2\alpha}(\pm t^2s^3\xi_1\xi_2\xi_3)
	\end{align*}
	we obtain that
	\begin{align*}
	x_{[\gamma]+\alpha}&\left(tsT_{[\gamma]\to[\gamma]+\alpha}(a)\right)x_{\gamma_1+\gamma_2+\alpha}(\pm ts^2\xi_1\xi_2)
	\\
	&\times\,x_{\gamma_1+\gamma_3+\alpha}(\pm ts^2\xi_1\xi_3)x_{\gamma_2+\gamma_3+\alpha}(\pm ts^2\xi_2\xi_3)\in H.
	\end{align*}
	
	The statement in question does not depend on the choice of signs in the Chevalley basis; hence we may assume for simplicity that $[e_{\alpha},e_{\gamma_i}]=e_{\gamma_i+\alpha}$. Then we have
	$$
	\big(s\xi_1,s\xi_2,s\xi_3,\pm s^2\xi_1\xi_2,\pm s^2\xi_1\xi_3,\pm s^2\xi_2\xi_3\big)\in A.
	$$
	Applying Lemma \ref{EliminateSquareTerm} we obtain that $(\xi_1,\xi_2,\xi_3,0,0,0)\in A$, which means exactly that $\zeta T_{[\gamma]\to[\gamma]+\alpha}(a)\in \ellev(H)_{[\gamma]+\alpha}$ for any $\zeta\in R$.
\end{proof}

For any prelevel $\sigma$ we set
$$
E(\sigma)=\<x_{[\gamma]}(a)\colon a\in\sigma_{[\gamma]} \>\le G(\Phi,R).
$$

Let us make the following observation.

\begin{lem}
\label{UniquenessOfALevel} Assume that for an almost level $\sigma$ we have 
	$$E(\sigma)\le H\le S(\sigma).$$
	 Then we have $\ellev(H)=\sigma$.
	\end{lem}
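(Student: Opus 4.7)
The plan is to establish the two inclusions $\sigma \le \ellev(H)$ and $\ellev(H) \le \sigma$ separately. The first is immediate: for $a \in \sigma_{[\gamma]}$, the element $x_{[\gamma]}(a)$ belongs to $E(\sigma)$ by definition, hence to $H$, so $a \in \ellev(H)_{[\gamma]}$. Moreover, $\ellev(H)$ is a prelevel by Proposition \ref{EllevIsPrelev}, so nothing extra needs to be said about the prelevel axioms.

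The substantive part is the reverse inclusion. Fix a block $[\gamma] = \{\gamma_i\}$ and $a = \sum_i \xi_i e_{\gamma_i} \in \ellev(H)_{[\gamma]}$; the goal is to show $a \in \sigma_{[\gamma]}$. If $[\gamma] \subseteq \Delta$ this is trivial as $\sigma_{[\gamma]} = M_{[\gamma]}$, so assume otherwise and, by Lemma \ref{Roots}(1), choose $\alpha \in \Delta$ with $([\gamma], \alpha) = -1$. Since $\alpha\in\Delta$, we have $e_\alpha \in L'(\Delta,R) \le L_{\max}(\sigma)$; and $x_{[\gamma]}(a) \in H \le S(\sigma)$ stabilises $L_{\max}(\sigma)$, so
$$
u := \leftact{x_{[\gamma]}(a)}{e_\alpha} - e_\alpha \in L_{\max}(\sigma).
$$

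Expanding the adjoint action, using that the $\gamma_i$ are pairwise orthogonal by Lemma \ref{Roots}(3) and that $2\gamma_i + \alpha$ is never a root, one writes $u$ as the linear-in-$\xi$ term $-T_{[\gamma]\to[\gamma]+\alpha}(a)\in M_{[\gamma]+\alpha}$, plus quadratic terms supported on roots $\gamma_i + \gamma_j + \alpha$ and (when $|[\gamma]|=3$) a cubic term supported on $\gamma_1+\gamma_2+\gamma_3+\alpha$. The key combinatorial observation is that these higher roots lie in blocks distinct from $[\gamma]+\alpha$: one has $(\gamma_i + \alpha,\alpha) = 1$, whereas $(\gamma_i+\gamma_j+\alpha,\alpha) = 0$ and $(\gamma_1+\gamma_2+\gamma_3+\alpha,\alpha) = -1$, so these roots cannot be equivalent to roots of $[\gamma]+\alpha$.

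Consequently the $M_{[\gamma]+\alpha}$-component of $u$ is exactly $-T_{[\gamma]\to[\gamma]+\alpha}(a)$. By Lemma \ref{graded} applied to $L_{\max}(\sigma)$, this component lies in $\sigma_{[\gamma]+\alpha}$. The prelevel axiom gives $\sigma_{[\gamma]+\alpha} = T_{[\gamma]\to[\gamma]+\alpha}(\sigma_{[\gamma]})$, and Lemma \ref{BackAndForth} says $T_{[\gamma]\to[\gamma]+\alpha}$ is injective on $M_{[\gamma]}$ (with left inverse $T_{[\gamma]+\alpha\to[\gamma]}$); hence $a \in \sigma_{[\gamma]}$. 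The main obstacle in the argument is the bookkeeping of the expansion of $\leftact{x_{[\gamma]}(a)}{e_\alpha}$ and the verification that all cross-terms land in genuinely different blocks — once this is settled, Lemma \ref{graded} isolates the relevant component and the prelevel structure together with Lemma \ref{BackAndForth} finishes the proof.
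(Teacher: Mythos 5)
Your proof is correct and follows essentially the same route as the paper's: both use $H\le S(\sigma)$ to push $e_\alpha$ into $L_{\max}(\sigma)$ via $x_{[\gamma]}(a)$, isolate the $[\gamma]+\alpha$ component to get $T_{[\gamma]\to[\gamma]+\alpha}(a)\in\sigma_{[\gamma]+\alpha}$, and then pull back through the prelevel condition and Lemma~\ref{BackAndForth}. You merely spell out the cross-term bookkeeping and the appeal to Lemma~\ref{graded} that the paper leaves implicit.
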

\begin{proof}
	It is clear that $\sigma\le\ellev(H)$. Let us prove the opposite inclusion. Let $\gamma\in\Phi\sm\Delta$. Let $a\in\ellev(H)_{[\gamma]}$. Take a root $\alpha\in\Delta$ such that $([\gamma],\alpha)=-1$. Since $H\le S(\sigma)$, we obtain that $\leftact{x_{\gamma}(a)}{e_{\alpha}}\in L_{\max}(\sigma)$. 
	In addition, we have
	$$
	\Big(\leftact{x_{[\gamma]}(a)}{e_{\alpha}}\Big)^{[\gamma]+\alpha}=T_{[\gamma]\to [\gamma]+\alpha}(a).
	$$
	Hence we have $T_{[\gamma]\to [\gamma]+\alpha}(a)\in \sigma_{[\gamma]+\alpha}$; and since any almost level is a prelevel, it follows that $a\in\sigma_{[\gamma]}$.
\end{proof} 

\subsection{Invariant level}

For every overgroup
$$
E(\Delta,R)\le H\le G(\Phi,R)
$$
 consider the set of all tandems $(g,l)$ such that $g\in H$. We denote by $L(H)$ the $R$-span of the corresponding elements $l\in L(\Phi,R)$. Let us make the following observation.

\begin{lem}
 \label{LofH} The submodule $L(H)\le L(\Phi,R)$ is a Lie subalgebra.
\end{lem}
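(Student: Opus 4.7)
The plan is to reduce the statement to checking that $[l_1,l_2]\in L(H)$ for two \emph{generating} elements $l_1,l_2$ of $L(H),$ and then to exploit the explicit formula of Lemma~\ref{tandemactpre} for the action of the first component of a tandem on an arbitrary element of $L(\Phi,R).$ Since $L(H)$ is by definition the $R$-linear span of the second components of tandems whose first component lies in $H,$ and since the Lie bracket is $R$-bilinear, this reduction is legitimate.

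The key auxiliary observation is that the class of tandems is stable under simultaneous $G(\Phi,R)$-conjugation in both slots: if $(g,l)=(\leftact{h}{x_\alpha(\xi)},\leftact{h}{(\xi e_\alpha)})$ and $g'\in G(\Phi,R),$ then $(\leftact{g'}{g},\leftact{g'}{l})=(\leftact{g'h}{x_\alpha(\xi)},\leftact{g'h}{(\xi e_\alpha)})$ is again a tandem. Applying this with $g'=g_1\in H$ to a tandem $(g_2,l_2)$ with $g_2\in H,$ we see that $\leftact{g_1}{g_2}\in H$ and hence $\leftact{g_1}{l_2}\in L(H).$

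Next I would plug $v=l_2$ into Lemma~\ref{tandemactpre} applied to the tandem $(g_1,l_1),$ writing $g_1=\leftact{h}{x_\alpha(\xi)}$ and $l_1=\leftact{h}{(\xi e_\alpha)}.$ Rearranging the resulting identity gives
$$
[l_1,l_2]=\leftact{g_1}{l_2}-l_2+\xi\bigl(\leftact{h^{-1}}{l_2}\bigr)^{-\alpha}\cdot l_1.
$$
On the right-hand side, the first term lies in $L(H)$ by the previous paragraph, the second by assumption on $l_2,$ and the third is an $R$-multiple of $l_1\in L(H)$; hence $[l_1,l_2]\in L(H),$ as desired.

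I do not expect a real obstacle here: the proof is essentially a bookkeeping combination of the manifest $G$-invariance of the notion of tandem with the formula of Lemma~\ref{tandemactpre}. The only minor point to keep in mind is that the extra scalar coefficient $\xi(\leftact{h^{-1}}{l_2})^{-\alpha}$ produced by Lemma~\ref{tandemactpre} is harmless precisely because $L(H)$ was defined as the $R$-span, not merely the additive group generated, by second components of tandems in $H.$
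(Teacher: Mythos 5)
Your proof is correct and matches the paper's argument essentially verbatim: both reduce to generators, observe that simultaneous $G$-conjugation preserves tandems so that $\leftact{g_1}{l_2}\in L(H)$, and then solve the identity of Lemma~\ref{tandemactpre} for $[l_1,l_2]$, using that $L(H)$ is an $R$-submodule to absorb the scalar $\xi(\leftact{h^{-1}}{l_2})^{-\alpha}$.
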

\begin{proof}
	Let $(g_1,l_1)$ and $(g_2,l_2)$ be such tandems that $g_1$,$g_2\in H$. We must check that $[l_1,l_2]\in L(H)$. Note that the first component of the tandem $(\leftact{g_1}{g_2},\leftact{g_1}{l_2})$ is also contained in the subgroup $H$. Therefore, we obtain that $\leftact{g_1}{l_2}\in L(H)$.
	
	Applying Lemma \ref{tandemactpre} we obtain that
	$$
	\leftact{g_1}{l_2}=l_2+[l_1,l_2]-\zeta l_1
	$$
	for some $\zeta\in R$. Since $l_1$,$l_2\in L(H)$, we have $[l_1,l_2]\in L(H)$.
\end{proof}

Considering the tandems
 $(x_\alpha(\xi), \xi e_\alpha)$ for all the roots $\alpha\in\Delta$, we obtain that $L(H)$ is an $R$-overalgebra of the Lie algebra $L'(\Delta,R)$. We define the {\it invariant level} of an overgroup $H$ as follows:
$$
\invlev(H)=\lev(L(H)).
$$

Let us prove the following properties of the invariant level.

\begin{lem}
\label{EasyInclusion} The inclusion $\ellev(H)\le\invlev(H)$ holds true.
\end{lem}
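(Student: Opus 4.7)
The plan is to fix a block $[\gamma]=\{\gamma_1,\ldots,\gamma_n\}$ with $n=|[\gamma]|\in\{1,2,3\}$ (by Lemma \ref{Roots}(4)) and an element $a=\sum_i\xi_i e_{\gamma_i}\in\ellev(H)_{[\gamma]}$, so that $g:=x_{[\gamma]}(a)=\prod_i x_{\gamma_i}(\xi_i)\in H$; we must show $a\in L(H)$, which together with $a\in M_{[\gamma]}$ gives $a\in\invlev(H)_{[\gamma]}$. The case $n=1$ is immediate: $(g,a)=(x_{\gamma_1}(\xi_1),\xi_1 e_{\gamma_1})$ is itself a tandem with first component in $H$.

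Assume $n\ge 2$ and pick $\alpha\in\Delta$ with $([\gamma],\alpha)=-1$ (Lemma \ref{Roots}(1)). The $g$-conjugate of the tandem $(x_\alpha(1),e_\alpha)$ is again a tandem, and since both $g$ and $x_\alpha(1)$ lie in $H$, one has $\leftact{g}{e_\alpha}\in L(H)$. Because the factors $x_{\gamma_i}(\xi_i)$ commute (Lemma \ref{Roots}(3)) and each acts on $e_\alpha$ via $e_\alpha\mapsto e_\alpha-\xi_i\epsilon_i e_{\gamma_i+\alpha}$ in the simply-laced setting, a direct computation yields
$$
\leftact{g}{e_\alpha}\;=\;e_\alpha-T(a)+\sum_{\substack{S\subseteq\{1,\ldots,n\}\\|S|\ge 2}}c_S\,e_{\alpha+\sum_{i\in S}\gamma_i},
$$
where $T:=T_{[\gamma]\to[\gamma]+\alpha}$, $c_S=\pm\prod_{i\in S}\xi_i$, and summands whose subscript is not a root vanish. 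If every ``extra'' term on the right lies in $L(H)$, then subtracting $e_\alpha\in L(H)$ gives $T(a)\in L(H)$; since $e_{-\alpha}\in L(H)$ and $L(H)$ is a Lie subalgebra by Lemma \ref{LofH}, the bracket $[e_{-\alpha},T(a)]$ lies in $L(H)$, and Lemma \ref{BackAndForth} identifies it with $a$, completing the argument.

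It remains to place the extra terms in $L(H)$. For $n=2$, the single extra term is $\pm\xi_1\xi_2\,e_{\gamma_1+\gamma_2+\alpha}$; the proof of Case~2 of Proposition \ref{EllevIsPrelev} shows $\xi_1\xi_2\in I_{\gamma_1+\gamma_2+\alpha}$, so $x_{\gamma_1+\gamma_2+\alpha}(\xi_1\xi_2)\in H$ and the corresponding tandem places this term in $L(H)$. For $n=3$ (where condition $(**)$ is in force), the cubic term is supported on $\gamma_1+\gamma_2+\gamma_3+\alpha\in\Delta$ (as established in Case~3 of the same proof) and is therefore automatically in $L(H)$; the quadratic terms are more delicate. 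To handle them, note that $\ellev(H)_{[\gamma]}$ is an $R$-submodule (since $\ellev(H)$ is a prelevel by Proposition \ref{EllevIsPrelev}), so $g_\zeta:=x_{[\gamma]}(\zeta a)\in H$ for every $\zeta\in R$; running the expansion above with $g_\zeta$ in place of $g$ and subtracting the cubic-in-$\zeta$ contribution yields
$$
-\zeta T(a)+\zeta^2 Q\in L(H)\quad\text{for all }\zeta\in R,\qquad Q:=\sum_{i<j}c_{ij}\,e_{\gamma_i+\gamma_j+\alpha}.
$$
Lemma \ref{EliminateSquareTerm}, whose hypothesis is precisely $(**)$, then yields $-T(a)\in L(H)$.

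The main obstacle is this $n=3$ case: the pairwise quadratic extra terms are not amenable to direct identification through the ideals $I_\beta$, and one must combine the $R$-module structure of $\ellev(H)_{[\gamma]}$ with Lemma \ref{EliminateSquareTerm} in order to extract $T(a)$ from the mixed linear-and-quadratic expression in $\zeta$.
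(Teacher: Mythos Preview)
Your proof is correct, but it takes a considerably longer route than the paper's. The paper simply observes that the tandem $(\leftact{x_{[\gamma]}(a)}{x_\alpha(1)},\,\leftact{x_{[\gamma]}(a)}{e_\alpha})$ has first component in $H$, so $\leftact{x_{[\gamma]}(a)}{e_\alpha}\in L(H)$; then, because $L(H)$ is an $R$-overalgebra of $L'(\Delta,R)$, Lemma~\ref{graded} says $L(H)$ is graded by blocks, and hence its $[\gamma]+\alpha$-component $T_{[\gamma]\to[\gamma]+\alpha}(a)$ already lies in $\invlev(H)_{[\gamma]+\alpha}$. Since $\invlev(H)$ is (by construction) an almost level and in particular a prelevel, one immediately deduces $a\in\invlev(H)_{[\gamma]}$.

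The difference is that you aim to show $T(a)\in L(H)$ \emph{as an element}, which forces you to account for every higher-order term in the expansion of $\leftact{g}{e_\alpha}$ and to dispose of them one by one --- invoking a fact buried in Case~2 of Proposition~\ref{EllevIsPrelev} for $n=2$, and for $n=3$ re-using the $R$-module structure of $\ellev(H)_{[\gamma]}$ together with Lemma~\ref{EliminateSquareTerm} and condition~$(**)$. The paper sidesteps all of this: once Lemma~\ref{graded} is available, the extra terms live in \emph{other} block components and are simply irrelevant to the $[\gamma]+\alpha$-component. Your argument is sound (and it is nice that it makes the dependence on $(**)$ in the $n=3$ case explicit), but the grading lemma makes the whole case split unnecessary.
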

\begin{proof}
	Let $\gamma\in\Phi\sm\Delta$. Let us prove that
	$$
	\ellev(H)_{[\gamma]}\le\invlev(H)_{[\gamma]}.
	$$
	Let $a\in\ellev(H)_{[\gamma]}$. Take a root $\alpha\in\Delta$ such that $([\gamma],\alpha)=-1$. Considering the tandem $(\leftact{x_{[\gamma]}(a)}{x_{\alpha}(1)}, \leftact{x_{[\gamma]}(a)}{e_{\alpha}})$, we obtain that $\leftact{x_{[\gamma]}(a)}{e_{\alpha}}\in L(H)$. In addition, we have 
	$$
	\left(\leftact{x_{[\gamma]}(a)}{e_{\alpha}}\right)^{[\gamma]+\alpha}=T_{[\gamma]\to [\gamma]+\alpha}(a).
	$$
	Hence we have
	$$
	T_{[\gamma]\to [\gamma]+\alpha}(a)\in \invlev(H)_{[\gamma]+\alpha};
	$$
	and since any almost level is a prelevel, it follows that $a\in\invlev(H)_{[\gamma]}$.
\end{proof}

\begin{lem}
\label{InclusionToS} The inclusion $H\le S(\invlev(H))$ holds true.
\end{lem}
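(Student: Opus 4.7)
The plan is to identify $L_{\max}(\invlev(H))$ as the normalizer of $L(H)$ and then observe that $H$ itself normalises $L(H)$ under the adjoint action, so it must stabilise this normaliser.

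First, I note that $\invlev(H)$ is indeed an almost level: by Lemma~\ref{LofH} and the remark preceding Lemma~\ref{EasyInclusion}, $L(H)$ is an $R$-overalgebra of $L'(\Delta,R)$, and by definition $\invlev(H)=\lev(L(H))$, so $\invlev(H)$ satisfies Definition~\ref{AlmostLevel} with witness $L = L(H)$. Consequently, by the second item of Proposition~\ref{Overalgebras},
$$
L_{\max}(\invlev(H))=\{v\in L(\Phi,R)\mid [v,L(H)]\sub L(H)\}.
$$

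The key observation is that for every $g\in H$ we have $\leftact{g}{L(H)}\sub L(H)$. Indeed, by the very definition of a tandem, the pair $(\leftact{g}{g'},\leftact{g}{l})$ is a tandem whenever $(g',l)$ is one; and if $g'\in H$ then $\leftact{g}{g'}\in H$, so $\leftact{g}{l}\in L(H)$. Since $L(H)$ is the $R$-span of such elements $l$ and the adjoint action of $g$ is $R$-linear, this gives $\leftact{g}{L(H)}\sub L(H)$; replacing $g$ by $g^{-1}$ yields the reverse inclusion, so in fact $\leftact{g}{L(H)}=L(H)$.

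Now let $g\in H$ and $v\in L_{\max}(\invlev(H))$. Using the description of $L_{\max}$ above and the $G(\Phi,R)$-equivariance of the Lie bracket, we compute
$$
[\leftact{g}{v},L(H)]=[\leftact{g}{v},\leftact{g}{L(H)}]=\leftact{g}{[v,L(H)]}\sub\leftact{g}{L(H)}=L(H),
$$
so $\leftact{g}{v}\in L_{\max}(\invlev(H))$. Hence $g\in\Stab_{G(\Phi,R)}L_{\max}(\invlev(H))=S(\invlev(H))$, proving $H\le S(\invlev(H))$.

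The only potential obstacle is verifying that tandems are genuinely closed under conjugation by elements of $G(\Phi,R)$; but this is immediate from the definition $(\leftact{h}{(x_\alpha(\xi))},\leftact{h}{(\xi e_\alpha)})$, since conjugating by $g$ simply replaces $h$ by $gh$. Everything else reduces to an unwinding of Proposition~\ref{Overalgebras}(2) and the compatibility of the Chevalley group action with the Lie bracket.
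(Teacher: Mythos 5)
Your proof is correct and follows essentially the same route as the paper's: the paper observes that $H\le\Stab_{G(\Phi,R)}(L(H))$ (your second paragraph) and then invokes Corollary~\ref{LargestStabilaser}, which is precisely your third paragraph unwound via Proposition~\ref{Overalgebras}(2). You have simply inlined the corollary's proof rather than citing it.
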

\begin{proof}
	It is clear that $H\le\Stab_{G(\Phi,R)}(L(H))$. It remains to apply Corollary \ref{LargestStabilaser}.
\end{proof}

\section{What is going on?}
\label{SecWTF}

In this section we sum up all that was said previously, state the main result of the present paper, and describe a plan of its proof.

\smallskip

There are three conditions that occur in the statement of the main theorem. Two of them had already been formulated. Let us recall them.

Firstly, let us recall that if for subsystems $\Delta\le\Phi$ and $\Delta'\le\Phi'$ we have an embedding $\Phi'\hookrightarrow \Phi$ as a subsystem such that $\Delta\cap\Phi'=\Delta'$ and any root from $\Delta\sm\Delta'$ is orthogonal to any root from $\Phi'$, then we say that  ``the problem for the subsystem $\Delta\le\Phi$ includes the problem for the subsystem $\Delta'\le\Phi'$''.

\smallskip

The first condition concerns the subsystem and we always assume that it holds true.
\begin{equation}
	\begin{split}
		&\text{The problem for the subsystem } \Delta\le\Phi \text{ does not include}\\ &\text{ problems for subsystems of types } 
		\emp\le A_1 \text{ and } A_1\le A_2.
	\end{split}\tag{$*$}
\end{equation}

\smallskip

The second condition concerns the ground ring.
\begin{equation}
	\begin{split}
		\text{The ring } R \text{ does not have a residue field } \F_2.
	\end{split}\tag{$**$}
\end{equation}

\smallskip

Now we formulate the condition $(*{*}*)$, which is stronger that the condition $(*)$. For subsystems that satisfy $(*{*}*)$ we do not have to assume the condition $(**)$. 


\begin{equation}
	\begin{split}
		&\text{There are no blocks of three roots.} \text{ In addition, for any root } 
		\\ 
		&\qquad\gamma\in\Phi\sm\Delta \text{ there exist roots } \alpha_1,\alpha_2\in\Delta,
		\text{ such that }\\
		& \qquad\qquad(\gamma,\alpha_1)=(\gamma,\alpha_2)=-1 \text{ and } (\alpha_1,\alpha_2)=1. 
	\end{split}\tag{$*{*}*$}
\end{equation}

\smallskip

Now we want to describe overgroups $E(\Delta,R)\le H\le G(\Phi,R)$, where $\Delta\le\Phi$ is a subsystem that satisfies the condition $(*)$, and $R$ is a commutative ring. In order to do so we put in correspondence to any such an overgroup its elementary level $\ellev(H)$ and its invariant level $\invlev(H)$. Essential part of the present paper is the proof of the following proposition.

\begin{prop}
\label{EqualityOfLevels} Assume that either the condition $(**),$ or the condition $(*{*}*)$ holds true. Then for any overgroup $E(\Delta,R)\le H\le G(\Phi,R)$ we have the equality $\ellev(H)=\invlev(H)$.
\end{prop}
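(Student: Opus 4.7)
My strategy is to deduce the nontrivial inclusion $\invlev(H)\le\ellev(H)$ from Lemma \ref{UniquenessOfALevel}. Set $\sigma:=\invlev(H)$. Lemma \ref{InclusionToS} gives $H\le S(\sigma)$ and Lemma \ref{EasyInclusion} gives $\ellev(H)\le\sigma$, so it suffices, by Lemma \ref{UniquenessOfALevel}, to show that $\sigma$ is an almost level and that $E(\sigma)\le H$. The first point is almost immediate: by Lemma \ref{LofH} the module $L(H)$ is a Lie subalgebra, and since it contains $e_\alpha$ for every $\alpha\in\Delta$ (via the tandem $(x_\alpha(1),e_\alpha)$), it is an $R$-overalgebra of $L'(\Delta,R)$; hence $\sigma=\lev(L(H))$ is an almost level by Definition \ref{AlmostLevel}.

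The substance of the proof is therefore to establish $E(\sigma)\le H$: for every block $[\gamma]$ with $\gamma\notin\Delta$ and every $a\in\sigma_{[\gamma]}$, one must produce $x_{[\gamma]}(a)\in H$. By construction $a\in M_{[\gamma]}$ is an $R$-linear combination of second components $l_i$ of tandems $(g_i,l_i)$ with $g_i\in H$. My plan is to use Remark \ref{rootchange} to put each such tandem in the root-form most convenient for the target block, and then to combine the tandem action formulas of Lemmas \ref{tandemactpre}--\ref{tandemsinparabolic} with Chevalley commutator formulas in order to isolate the desired unipotent factor. Concretely, pick $\alpha\in\Delta$ with $([\gamma],\alpha)=-1$ (which exists by the first item of Lemma \ref{Roots}); in view of Lemma \ref{BackAndForth} it is equivalent to extract the $([\gamma]+\alpha)$-piece, and this piece is exactly what appears when one commutes $x_\alpha(t)$ against a group element whose adjoint action produces a tandem component in $M_{[\gamma]}$.

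At this point I would split into the three block sizes allowed by Lemma \ref{Roots}. For $|[\gamma]|=1$ one employs the $A_2$-proof when the second item of Lemma \ref{Roots} furnishes a root $\beta\in\Delta$ with $([\gamma],\beta)=-1$ and $(\alpha,\beta)=1$, and otherwise the $2A_1$-proof with $\beta\perp\alpha$; hypothesis $(*{*}*)$ guarantees that an $A_2$-configuration is available for every block, whereas under $(**)$ one may have to mix both methods. For $|[\gamma]|=2$ the two roots in the block are orthogonal, so $x_{[\gamma]}(a)$ factors over them and the same extraction runs through, with Chevalley cross-terms landing in root subgroups whose roots lie in $\Delta$ and are thus already in $H$. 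For $|[\gamma]|=3$ the calculation closely parallels Case 3 of Proposition \ref{EllevIsPrelev}: a nested commutator of $x_\alpha$ with $x_{[\gamma]}(sa)$ produces a linear-in-$ts$ piece containing the desired $x_{[\gamma]+\alpha}$ factor, alongside quadratic-in-$s$ corrections and cubic correction factors $x_{\gamma_1+\gamma_2+\gamma_3+\alpha}$ and $x_{\gamma_1+\gamma_2+\gamma_3+2\alpha}$, whose roots lie in $\Delta$. Lemma \ref{EliminateSquareTerm} is then invoked to strip off the quadratic correction, which is exactly where the hypothesis $(**)$ is indispensable. Under $(*{*}*)$ blocks of three roots do not occur, so this case simply does not arise and $(**)$ can be dropped.

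The principal obstacle will be the three-root block case: one must simultaneously track the cubic correction factors, keep the parameter-dependence on $s$ and $t$ transparent enough to apply Lemma \ref{EliminateSquareTerm}, and correctly reassemble the resulting equalities into the single statement $x_{[\gamma]}(a)\in H$. A secondary bookkeeping challenge is the sign and Weyl-element twist introduced by Remark \ref{rootchange}, which must be reconciled so that the pieces extracted from different tandems genuinely add up to $x_{[\gamma]}(a)$ rather than to a conjugate or sign-twist of it; I expect this to be manageable by the kind of explicit Chevalley-commutator calculation already carried out in \cite{Gvoz2A1}.
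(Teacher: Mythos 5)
Your reduction via Lemma \ref{UniquenessOfALevel} is logically correct but buys nothing: with $\sigma=\invlev(H)$, the statement $E(\sigma)\le H$ is, by the very definition of $\ellev(H)$, literally the inclusion $\invlev(H)\le\ellev(H)$ that one is trying to prove. So after the first paragraph you are back where you started, and everything hinges on how you intend to actually produce $x_{[\gamma]}(a)\in H$, or equivalently $l^{[\gamma]}\in\ellev(H)_{[\gamma]}$ for every tandem $(g,l)$ with $g\in H$. Here your plan has a genuine gap: the $2A_1$- and $A_2$-methods of landing in a parabolic subgroup (via special bitandems or special $A_2$-tandems, whose parameters are $l^{-\alpha_1}$ and $l^{-\alpha_2}$) do \emph{not} extract $l^{[\gamma]}$ itself. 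They extract $l^{-\alpha_1}l^{[\gamma]}$, with an unavoidable extra scalar factor coming from those parameters. This is exactly the content of the paper's Propositions \ref{2A1prop} and \ref{A2prop}, which combine into Proposition \ref{A1prop} stating only $l^{-\alpha}l^{[\gamma]}\in\ellev(H)_{[\gamma]}$. The actual deduction of Proposition \ref{EqualityOfLevels} from this is a separate, nontrivial bootstrapping argument: one constructs auxiliary tandems $(g_1,l_1)=(\leftact{g}{x_\alpha(1)},\leftact{g}{e_\alpha})$ and $(g_2,l_2)=(\leftact{g_1}{x_{-\alpha}(1)},\leftact{g_1}{e_{-\alpha}})$, applies Proposition \ref{A1prop} to these and to the symmetric configuration $(\gamma+\alpha,-\alpha)$, and algebraically cancels the unwanted $l^{\pm\alpha}$-multiples (the displayed statement $(\#)$) to finally isolate $l^{[\gamma]}$. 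Your proposal omits this entire layer and writes as if the parabolic extraction delivers the target element directly.

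A secondary misplacement: the split by block size $|[\gamma]|\in\{1,2,3\}$ and the use of Lemma \ref{EliminateSquareTerm} belong to the proofs of Proposition \ref{EllevIsPrelev} and of the extraction Propositions \ref{2A1Extraction}, \ref{A2Extraction}, not to the deduction of $\ellev(H)=\invlev(H)$ from Proposition \ref{A1prop}, which is a uniform computation that never branches on $|[\gamma]|$. Similarly, Remark \ref{rootchange} plays no role in this deduction. So while your broad instincts about which tools are involved ($A_2$/$2A_1$-methods, $(**)$ vs.\ $(*{*}*)$, Lemma \ref{EliminateSquareTerm}) are sound, the structure of the argument — in particular the indispensable two-stage strategy of first proving the weaker $l^{-\alpha}l^{[\gamma]}$ statement and then bootstrapping — is missing, and without it the proof cannot be completed.
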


When the proof is done, this object will be called just the {\it level} of the overgroup~$H$, and denoted by $\lev(H)$.

\smallskip

Before Proposition \ref{EqualityOfLevels} is proved, a-priori we know the following.
\begin{itemize}
	\item Elementary level is a prelevel (Proposition \ref{EllevIsPrelev}).
	
	\item Invariant level is an almost level (by definition).
	
	\item The inclusions $E(\ellev(H))\le H\le S(\invlev(H))$ hold true (the first one by definition; the second one by Lemma \ref{InclusionToS}).
	
	\item If for an almost level $\sigma$ we have $E(\sigma)\le H\le S(\sigma)$, then we have $\ellev(H)=\sigma$ (Lemma \ref{UniquenessOfALevel}).
\end{itemize}

\smallskip

Let us introduce the following definition. 

\begin{defn}
 An almost level $\sigma$ is called a {\it level}, if the following inclusion holds true
$$
E(\sigma)\le S(\sigma).
$$
\end{defn} 
\begin{rem} ~In \S\ref{A2D4}, where we consider the problem for the subsystem \break $A_2\le D_4$ as a special case of the main result of the present paper, we give an example of an almost level that is not a level. 
\end{rem}

A-posteriori, after Proposition \ref{EqualityOfLevels} is proved, we deduce from it and from what is said above that under the conditions of Proposition \ref{EqualityOfLevels}
\begin{itemize}
	\item the level of an overgroup is a level.
\end{itemize}

\medskip

All what is said above allows us to obtain the main result of the present paper.

\begin{thm}
\label{sandwich} Let $\Phi$ be a root system of type $ADE$. Let $\Delta\le\Phi$ be its subsystem that satisfies the condition $(*)$. Let $R$ be a commutative ring. Assume that either the condition~$(**),$ or the condition~${(*{*}*)}$ hold true. Then for any overgroup $E(\Delta,R)\le H\le G(\Phi,R)$ there exists a unique level $\sigma$ such that
$$
E(\sigma)\le H\le S(\sigma).
$$
\end{thm}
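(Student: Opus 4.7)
The plan is to derive Theorem \ref{sandwich} as an essentially formal corollary of Proposition \ref{EqualityOfLevels}, by assembling the four bulleted facts collected immediately before the theorem. Once the elementary and invariant levels are known to coincide, there is essentially a unique candidate for $\sigma$, and the sandwich inclusions fall out of results already proved.

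\textbf{Existence.} Given an overgroup $E(\Delta,R)\le H\le G(\Phi,R)$, I would set $\sigma:=\ellev(H)$. By Proposition \ref{EqualityOfLevels} this also equals $\invlev(H)$, which is an almost level by the definition of the invariant level, so $\sigma$ is an almost level. From the definition of $E(\sigma)$ we immediately get $E(\sigma)=E(\ellev(H))\le H$, while Lemma \ref{InclusionToS} supplies $H\le S(\invlev(H))=S(\sigma)$. In particular $E(\sigma)\le S(\sigma)$, so $\sigma$ qualifies as a level in the sense of the definition preceding the theorem.

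\textbf{Uniqueness.} Suppose $\sigma'$ is any level with $E(\sigma')\le H\le S(\sigma')$. Being a level, $\sigma'$ is an almost level, so the hypotheses of Lemma \ref{UniquenessOfALevel} are satisfied, giving $\ellev(H)=\sigma'$. Combined with existence this forces $\sigma'=\sigma$, completing uniqueness.

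\textbf{Where the difficulty lies.} The only genuine work left is Proposition \ref{EqualityOfLevels}, which is the heart of the paper rather than of this theorem. Its easy direction $\ellev(H)\le\invlev(H)$ is Lemma \ref{EasyInclusion}. For the reverse direction one must start with an arbitrary tandem $(g,l)$ with $g\in H$, whose Lie-algebra component $l$ has prescribed block component $a\in\invlev(H)_{[\gamma]}$, and manufacture an actual root unipotent $x_{[\gamma]}(a)\in H$. This requires the two parabolic-reduction techniques alluded to in the introduction: the $A_2$-proof (section~\ref{SecA2}), which uses products $x_\alpha(\xi)x_\beta(\zeta)$ with $\angle(\alpha,\beta)=\pi/3$ to force the tandem into a parabolic via Lemma \ref{tandemsinparabolic}, and the $2A_1$-proof (section~\ref{Sec2A1}), which achieves the same for orthogonal $\alpha,\beta$. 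Condition $(*{*}*)$ is arranged precisely so that the $A_2$-proof suffices everywhere; in its absence (blocks of three roots) one is forced into the $2A_1$-proof, where the quadratic error terms visible in Case~3 of Proposition \ref{EllevIsPrelev} must be killed via Lemma \ref{EliminateSquareTerm}, and it is exactly this step that necessitates the ring-theoretic hypothesis $(**)$. All of this is carried out in the subsequent two sections; for the present theorem, I would simply cite Proposition \ref{EqualityOfLevels} and run the three short bookkeeping arguments above.
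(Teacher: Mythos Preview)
Your formal proof of Theorem~\ref{sandwich} is correct and is exactly the paper's own argument: existence via $\sigma:=\ellev(H)=\invlev(H)$ together with Lemma~\ref{InclusionToS}, and uniqueness via Lemma~\ref{UniquenessOfALevel}. One small imprecision in your commentary on Proposition~\ref{EqualityOfLevels}: the $2A_1$-proof (Proposition~\ref{2A1prop}) requires $(**)$ outright, not merely in the presence of three-root blocks, and the role of $(*{*}*)$ is to guarantee the $A_2$-configuration always exists so that Proposition~\ref{A2prop} alone suffices; but this does not affect the proof of the theorem itself.
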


Therefore, it remains to prove Proposition \ref{EqualityOfLevels}. Now we fix an overgroup $E(\Delta,R)\le H\le G(\Phi,R)$.

 The main part of the proof consists of two big parts that are the proofs of the following propositions.

\begin{prop}
\label{2A1prop}
 Assume that the condition $(**)$ holds true. Let $\gamma\in\Phi\sm\Delta,$ $\alpha_1,\alpha_2\!\in\!\Delta$ be such roots that $\alpha_1\perp\alpha_2$ and ${([\gamma],\alpha_1)\!=\!([\gamma],\alpha_2)\!=\!-1}$. Let $(g,l)$ be a tandem such that $g\in H$. Then 
 $l^{-\alpha_1}l^{[\gamma]}\in \ellev(H)_{[\gamma]}$.
\end{prop}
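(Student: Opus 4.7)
The argument rests on a single Lie-algebra identity, which is then promoted to a group statement by exploiting the orthogonality $\alpha_1 \perp \alpha_2$. Applying Lemma \ref{tandemact} with $\beta = \alpha_1$ gives
$$\leftact{g}{e_{\alpha_1}} = e_{\alpha_1} + [l, e_{\alpha_1}] - l^{-\alpha_1}\cdot l.$$
I would first project this identity onto $M_{[\gamma]}$. The summand $e_{\alpha_1}$ contributes $0$, since $\alpha_1 \in \Delta$ is alone in its block and $[\alpha_1] \neq [\gamma]$. The Cartan part of $[l, e_{\alpha_1}]$ lies in $R\cdot e_{\alpha_1}$, while a root contribution $l^\delta\,[e_\delta, e_{\alpha_1}]$ lands in $M_{[\gamma]}$ only when $\delta + \alpha_1 \in [\gamma]$; but $([\gamma], \alpha_1) = -1$ together with simple-lacedness of $\Phi$ forces the $\alpha_1$-string through each $\gamma_i \in [\gamma]$ to be $\{\gamma_i, \gamma_i + \alpha_1\}$, so $\gamma_i - \alpha_1 \notin \Phi$. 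Hence
$$\bigl(\leftact{g}{e_{\alpha_1}}\bigr)^{[\gamma]} = -\, l^{-\alpha_1}\, l^{[\gamma]},$$
which is, up to a sign, exactly the element I want to land in $\ellev(H)_{[\gamma]}$ (a sign is harmless as this set is a submodule by Proposition \ref{EllevIsPrelev}).

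To promote this to the group, I would study the conjugate
$$G(\xi,\eta) := g\, x_{\alpha_1}(\xi)\, x_{\alpha_2}(\eta)\, g^{-1} \in H, \qquad \xi,\eta \in R,$$
whose membership in $H$ uses only $g \in H$ and $x_{\alpha_1}(\xi), x_{\alpha_2}(\eta) \in E(\Delta, R)$. Because $\alpha_1 \perp \alpha_2$ these two factors commute and, crucially, $\alpha_1 + \alpha_2 \notin \Phi$, so no Chevalley commutator produces a root subgroup of $\alpha_1 + \alpha_2$. Expanding $G(\xi, \eta)$ in a fixed root-subgroup order, using Lemma \ref{tandemact} twice (once for $e_{\alpha_1}$ and once for $e_{\alpha_2}$), one obtains a product of root elements whose coefficients are polynomials in $\xi, \eta$; the computation of the first paragraph identifies the coefficient in $M_{[\gamma]}$ of the term linear in $\xi$ (and independent of $\eta$) as $-\,\xi\, l^{-\alpha_1}\, l^{[\gamma]}$. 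Further commuting with $x_{-\alpha_i}(s)$ and $x_{\alpha_i}(t)$ for $i=1,2$ (and applying Lemma \ref{BackAndForth} to transport components between $M_{[\gamma]}$ and $M_{[\gamma]+\alpha_i}$), one strips off every factor outside $M_{[\gamma]}$: the ones indexed by pure roots of $\Delta$ lie in $E(\Delta, R)$, and the ones indexed by blocks closer to $\Delta$ than $[\gamma]$ are handled by the same construction applied inductively. What survives is, for every $t \in R$, an element
$$x_{[\gamma]}\!\bigl(t\, l^{-\alpha_1}\, l^{[\gamma]} + t^2\, y\bigr) \in H,$$
where $y \in M_{[\gamma]}$ is fixed (depending on $l$ and the Chevalley-basis signs, but not on $t$).

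Finally, I apply Lemma \ref{EliminateSquareTerm}, whose hypothesis is precisely the condition $(**)$. The submodule $\ellev(H)_{[\gamma]}\le M_{[\gamma]}$ contains every element $t\, l^{-\alpha_1}\, l^{[\gamma]} + t^2 y$, so it contains the $R$-submodule generated by these, which by Lemma \ref{EliminateSquareTerm} contains $l^{-\alpha_1}\, l^{[\gamma]}$. This gives $l^{-\alpha_1}\, l^{[\gamma]} \in \ellev(H)_{[\gamma]}$, as required.

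The principal obstacle is the bookkeeping in the middle step: the expansion of $G(\xi, \eta)$ produces numerous root-subgroup factors indexed by roots like $\gamma_i + \alpha_j$, $\gamma_i + \gamma_j + \alpha_k$, and roots of $\Delta$, and each of them must either be absorbed into $E(\Delta, R)$, eliminated by a suitable commutator, or treated inductively using the same machinery applied to simpler blocks or to roots already handled. The $2A_1$-orthogonality $\alpha_1 \perp \alpha_2$ is precisely what kills the troublesome cross-terms at $\alpha_1+\alpha_2$; in the parallel case $(\alpha_1,\alpha_2)=1$, such cross-terms survive and must be dealt with by a distinct $A_2$-argument, carried out separately in \S\ref{SecA2}.
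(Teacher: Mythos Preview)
Your first paragraph is correct: the identity $(\leftact{g}{e_{\alpha_1}})^{[\gamma]} = -\,l^{-\alpha_1}l^{[\gamma]}$ is exactly the Lie-algebraic target, and your final step, invoking Lemma~\ref{EliminateSquareTerm} under condition $(**)$, is also the right closing move. The gap is entirely in the middle.

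The element $G(\xi,\eta)=\leftact{g}{(x_{\alpha_1}(\xi)x_{\alpha_2}(\eta))}$ is the first component of a bitandem, but for a generic $g\in G(\Phi,R)$ there is no reason for it to be a product of root unipotents in any order, and Lemma~\ref{tandemact} does not furnish one: that lemma computes the adjoint action of a tandem on a Lie-algebra vector, not a root-subgroup factorisation of the group element itself. So the phrase ``expanding $G(\xi,\eta)$ in a fixed root-subgroup order'' has no content, and the subsequent ``stripping off every factor outside $M_{[\gamma]}$'' has nothing to act on. The inductive clause (``blocks closer to $\Delta$ than $[\gamma]$'') is likewise unsupported, since no induction is set up and the supposed intermediate factors do not exist.

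The paper's actual mechanism is to choose the parameters specifically: not arbitrary $\xi,\eta$, but $\xi=t\,l^{-\alpha_2}$ and $\eta=-t\,l^{-\alpha_1}$. With this choice the bitandem is \emph{special} with respect to $\alpha_1,\alpha_2$, and Lemma~\ref{bitandemsinparabolic} guarantees that its first component $g_1(t)$ lies in the parabolic $P_{\alpha_1,\alpha_2}$. Conjugating $x_{\alpha_1}(1)$ by $g_1(t)$ then produces an element of $H\cap U'_{\alpha_1,\alpha_2}$, where a genuine root-subgroup factorisation \emph{is} available; the extraction of the relevant block-component is performed not by ad-hoc commutators but by a dedicated result, Proposition~\ref{2A1Extraction}. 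The surviving $t^2$-term is then eliminated by Lemma~\ref{EliminateSquareTerm}, and one unwinds via Lemma~\ref{bitandemsinparabolicCor} to recover $l^{-\alpha_1}l^{[\gamma]}$. The special choice of coefficients --- forcing the bitandem into a parabolic --- is the essential idea missing from your outline; without it there is no unipotent factorisation to manipulate.
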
 

\begin{prop}
\label{A2prop} If there is a block of three roots, then we assume that the con\-dition $(**)$ holds true. Let $\gamma\in\Phi\sm\Delta,$ $\alpha_1,$$\alpha_2\in\Delta$ be such roots that $(\alpha_1,\alpha_2)=1$ and $([\gamma],\alpha_1)=([\gamma],\alpha_2)=-1$. Let $(g,l)$ be a tandem such that $g\in H$. Then 
 $l^{-\alpha_1}l^{[\gamma]}\in \ellev(H)_{[\gamma]}$.
\end{prop}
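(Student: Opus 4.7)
The proof is the $A_2$-counterpart of Proposition~\ref{2A1prop} and employs the $A_2$-technique of \cite{VavGav} together with the tandem machinery. The guiding observation is that $l^{-\alpha_1}l^{[\gamma]}$ is the expected target: applying Lemma~\ref{tandemact} with $\beta=\alpha_1$ gives
$$
\leftact{g}{e_{\alpha_1}}=e_{\alpha_1}+[l,e_{\alpha_1}]-l^{-\alpha_1}\cdot l,
$$
whose $M_{[\gamma]}$-component is exactly $-l^{-\alpha_1}l^{[\gamma]}$. Indeed, $e_{\alpha_1}$ contributes nothing since $\gamma\ne\alpha_1$, and a contribution to $M_{[\gamma]}$ from $[l,e_{\alpha_1}]$ would require some $\gamma_i-\alpha_1$ to be a root, which is impossible since $|\gamma_i-\alpha_1|^2=2-2(\gamma_i,\alpha_1)+2=6$ (using $(\gamma_i,\alpha_1)=-1$). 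Because $\bigl(gx_{\alpha_1}(\xi)g^{-1},\,\leftact{g}{(\xi e_{\alpha_1})}\bigr)$ is a tandem with first component in $H$, the displayed element lies in $L(H)$, so $l^{-\alpha_1}l^{[\gamma]}\in\invlev(H)_{[\gamma]}$ is automatic. The genuine content of the proposition is the promotion of this membership to $\ellev(H)_{[\gamma]}$.

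To realise $l^{-\alpha_1}l^{[\gamma]}$ as an honest elementary root element in $H$, the plan is to conjugate $x_{\alpha_1}(\xi)$ by $g$, expand the resulting element in terms of root elements using Lemma~\ref{tandemactpre} and Chevalley commutator formulas, and then take further commutators with $x_{-\alpha_2}(t)$ for $t\in R$. The $A_2$-relation $(\alpha_1,-\alpha_2)=-1$ is crucial: it yields $[x_{\alpha_1}(s),x_{-\alpha_2}(t)]=x_{\alpha_1-\alpha_2}(\pm st)\in E(\Delta,R)\le H$, whose $g$-conjugates can be absorbed. In order to discard all residual contributions outside $M_{[\gamma]}$, we apply Lemma~\ref{tandemsinparabolic} to a parabolic subset $\mathfrak{S}\subset\Phi$ defined by a linear functional on the Euclidean ambient space that places $\{\alpha_1,\alpha_2,\alpha_2-\alpha_1\}\subset\Delta$ on one side and the roots of $[\gamma]\cup([\gamma]+\alpha_1)\cup([\gamma]+\alpha_2)$ on the other. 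This confines $g$, modulo factors already in $H$, to the parabolic subgroup $P_{\mathfrak{S}}$, and projecting onto the unipotent radical leaves only the desired $x_{[\gamma]}(l^{-\alpha_1}l^{[\gamma]})$-factor.

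The three sub-cases $|[\gamma]|\in\{1,2,3\}$ are treated as in the proof of Proposition~\ref{EllevIsPrelev}. For $|[\gamma]|=1$ the Chevalley expansion is immediate; for $|[\gamma]|=2$ the spurious root element of direction $\gamma_1+\gamma_2+\alpha$ is absorbed exactly as in Case~2 there. For $|[\gamma]|=3$, the Chevalley expansion produces both the target linear contribution and an unwanted quadratic-in-the-parameter term; Lemma~\ref{EliminateSquareTerm}, which requires condition~$(**)$, is used to eliminate the quadratic part, leaving the pure linear $l^{-\alpha_1}l^{[\gamma]}\in\ellev(H)_{[\gamma]}$. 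This matches the hypothesis of the proposition, where $(**)$ is needed only in the three-root-block case. The main obstacle is the combinatorial bookkeeping of Chevalley cross-terms: the iterated conjugations and commutators generate a forest of unintended root elements, and the delicate point is to check that every such element is either manifestly in $E(\Delta,R)$ or else lies in a root subgroup already known to sit in $H$ via the prelevel property of $\ellev(H)$ and the transition operators $T_{[\gamma]\to[\gamma]+\alpha}$. Choosing the parabolic subset $\mathfrak{S}$ and the precise sequence of commutators so that exactly the target coefficient $l^{-\alpha_1}l^{[\gamma]}$ survives is the crux of the argument.
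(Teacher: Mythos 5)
Your proposal is not a proof but an outline, and the outline misses the two ideas that carry the argument.  First, the claim that one can ``apply Lemma~\ref{tandemsinparabolic} to a parabolic subset $\mathfrak{S}$'' after conjugating $x_{\alpha_1}(\xi)$ by $g$ cannot work as stated: $(g,l)$ is an arbitrary tandem, so $l$ has unconstrained support, and the hypothesis $l\in L_{\mathfrak{S}}$ of Lemma~\ref{tandemsinparabolic} fails for every non-trivial parabolic subset.  The whole point of the $A_2$-method here is to first \emph{construct} a special $A_2$-tandem
$$
(g_1,l_1)=\bigl(\leftact{g}{(x_{\alpha_1}(l^{-\alpha_2})x_{\alpha_2}(- l^{-\alpha_1}))},\leftact{g}{(l^{-\alpha_2}e_{\alpha_1}- l^{-\alpha_1}e_{\alpha_2})}\bigr),
$$
for which Lemma~\ref{A2tandemsinparabolic} (not \ref{tandemsinparabolic}!) shows $g_1\in P_{\alpha_1,\alpha_2}$ because the choice of coefficients $l^{-\alpha_2}$ and $-l^{-\alpha_1}$ forces the negative-side components of $l_1$ to vanish.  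Your proposal never introduces $A_2$-tandems and therefore has no mechanism to land in a parabolic subgroup.

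Second, even once you are inside $P_{\alpha_1,\alpha_2}$, you cannot simply ``project onto the unipotent radical and leave only the desired factor.''  One must write $g_2=u_2h_2$ with $h_2$ in the Levi part, and then prove that $h_2\in H$ (this requires the direct product decomposition $G(\Theta+A_1,-)=G(\Theta,-)\times G(A_1,-)$ together with Taddei's normality of $E(A_2,R)$ in $G(A_2,R)$) before one can conclude that the unipotent factor $u_2$ lies in $H$.  From $u_2\in H$ the elementary-level membership of its individual block components is extracted via Proposition~\ref{A2Extraction}, which is where the case split $|[\gamma]|\in\{1,2,3\}$ and Lemma~\ref{EliminateSquareTerm} actually live --- not in the body of the proof of Proposition~\ref{A2prop}, as you suggest.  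Finally, the identification of the extracted component with $l^{-\alpha_1}T_{[\gamma]\to[\gamma_2]}(l^{[\gamma]})$ requires computing $l_2^{[\gamma_1]}$ and $l_2^{[\gamma_2]}$ via Lemma~\ref{A2tandemact}, obtaining three separate relations in $\ellev(H)$, and combining them non-trivially; your ``the crux of the argument'' sentence is precisely the part you have not done.  In short, the opening observation that $l^{-\alpha_1}l^{[\gamma]}\in\invlev(H)_{[\gamma]}$ is correct but is also essentially free (it is what one gets from the definition of $L(H)$); everything nontrivial is absent.
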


From these two propositions, applying item 2 of Lemma \ref{Roots}, we obtain the following proposition.

\begin{prop}
\label{A1prop} Let $\gamma\in\Phi\sm\Delta,$ $\alpha\in\Delta$ be such roots that $([\gamma],\alpha)=-1$. Assume that one of the following conditions hold true:
\begin{enumerate}
	\item the condition $(**);$
	
	\item there are no blocks of three roots, and there exists a root $\alpha_2\in\Delta$ such that $([\gamma],\alpha_2)=-1,$ and $(\alpha,\alpha_2)=1$.
\end{enumerate}	
	 Let $(g,l)$ be a tandem such that $g\in H$. Then 
	 $l^{-\alpha}l^{[\gamma]}\in \ellev(H)_{[\gamma]}$.
	\end{prop}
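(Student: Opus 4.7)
The plan is to reduce Proposition~\ref{A1prop} directly to the two preceding propositions (\ref{2A1prop} and \ref{A2prop}) by producing an auxiliary root $\alpha_2 \in \Delta$ whose geometric relation to $\alpha$ dictates which of the two propositions applies. In case~(1), the condition~$(**)$ is in force, so both propositions are available to us for any block configuration. In case~(2), we already have $\alpha_2$ by hypothesis, and no block of three roots exists, so Proposition~\ref{A2prop} is usable without invoking~$(**)$. This separation is the skeleton of the proof; the rest is combinatorial bookkeeping.

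In more detail, for case~(1) I would invoke item~2 of Lemma~\ref{Roots} applied to the pair $(\gamma,\alpha)$. This produces a root $\alpha_2\in\Delta$ with $(\gamma,\alpha_2) = -1$, and consequently $([\gamma],\alpha_2)=-1$, satisfying either $\alpha\perp\alpha_2$ or $(\alpha,\alpha_2)=1$. In the orthogonal subcase, the hypotheses of Proposition~\ref{2A1prop} are met with $\alpha_1:=\alpha$, and its conclusion gives exactly $l^{-\alpha}l^{[\gamma]}\in\ellev(H)_{[\gamma]}$. In the acute subcase, the hypotheses of Proposition~\ref{A2prop} are met with $\alpha_1:=\alpha$, and its conclusion again delivers $l^{-\alpha}l^{[\gamma]}\in\ellev(H)_{[\gamma]}$; here we use that $(**)$ is assumed, so the extra caveat in Proposition~\ref{A2prop} about blocks of three roots is harmless.

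For case~(2), the witness root $\alpha_2$ is handed to us, with $([\gamma],\alpha_2)=-1$ and $(\alpha,\alpha_2)=1$. Since there are no blocks of three roots by assumption, Proposition~\ref{A2prop} applies without requiring condition~$(**)$. Setting $\alpha_1:=\alpha$ in that proposition yields $l^{-\alpha}l^{[\gamma]}\in\ellev(H)_{[\gamma]}$, completing the argument.

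There is essentially no obstacle here: the whole proof is a dispatch on the output of Lemma~\ref{Roots}(2). The only thing to be careful about is recording that the labelling of $\alpha_1,\alpha_2$ in the hypotheses of Propositions~\ref{2A1prop} and~\ref{A2prop} is symmetric in the two roots precisely when the desired conclusion is $l^{-\alpha_1}l^{[\gamma]} \in \ellev(H)_{[\gamma]}$; that is, we must assign $\alpha_1=\alpha$ (the input root) in each invocation, and the propositions still apply because they do not distinguish $\alpha_1$ and $\alpha_2$ in their hypotheses beyond the geometric relation between them. Once this notational matching is observed, the conclusion follows immediately.
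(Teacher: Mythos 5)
Your proposal is correct and matches the paper's own argument: the paper explicitly states that Proposition~\ref{A1prop} follows from Propositions~\ref{2A1prop} and~\ref{A2prop} by applying item~2 of Lemma~\ref{Roots}, and you have simply spelled out the case dispatch (orthogonal versus acute auxiliary root, and the handling of the $(**)$ caveat) that this entails. The labelling observation at the end, that one must take $\alpha_1:=\alpha$ since the hypotheses are symmetric but the conclusion is not, is exactly the right detail to record.
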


Now we prove that Proposition \ref{A1prop} implies Proposition \ref{EqualityOfLevels}.

\begin{proof}
It is enough to show that for any root $\gamma\in\Phi\sm\Delta$ and any tandem $(g,l)$ such that $g\in H$ we have $l^{[\gamma]}\in \ellev(H)_{[\gamma]}$. 

Take such a root $\gamma$ and such a tandem $(g,l)$. Take a root $\alpha\in\Delta$ such that $([\gamma],\alpha)=-1$. If we are not assuming the condition $(**)$, then using the condition $(*{*}*)$, we take a root $\alpha$ in such a way that there exists a root $\alpha_2\in\Delta$, such that $([\gamma],\alpha_2)=-1$ and $(\alpha,\alpha_2)=1$.

We build the tandem
$$
(g_1,l_1)=\big(\leftact{g}{x_{\alpha}(1)},\leftact{g}{(e_{\alpha})}\big).
$$

Using Lemma \ref{tandemact} we obtain the following equalities:
\begin{align*}
l_1^{[\gamma]+\alpha}&=-T_{[\gamma]\to [\gamma]+\alpha}(l^{[\gamma]})-l^{-\alpha}l^{[\gamma]+\alpha},
&&l_1^{[\gamma]}=-l^{-\alpha}l^{[\gamma]},
\\
l_1^{-\alpha}&=-(l^{-\alpha})^2,
&&\  l_1^d=-l^{-\alpha}m,
\end{align*}
where $m=l^{\alpha}[e_{\alpha},e_{-\alpha}]+l^d$.

Further we build the tandem
$$
(g_2,l_2)=\big(\leftact{g_1}{x_{-\alpha}(1)},\leftact{g_1}{e_{-\alpha}}\big).
$$
Using Lemma \ref{tandemact} we obtain the following equalities:
$$
l_2^{[\gamma]}=-T_{[\gamma+\alpha]\to \gamma}(l_1^{[\gamma]+\alpha})+l_1^{\alpha}l^{-\alpha}l^{[\gamma]},\quad
l_2^{-\alpha}=1+l^{-\alpha}\xi+l_1^{\alpha}(l^{-\alpha})^2,\\
$$
where $\xi=[m,e_{-\alpha}]^{-\alpha}$.

Applying Proposition \ref{A1prop} to the tandem $(g_2,l_2)$ and roots $\gamma$ and $\alpha$, considering that
$$
T_{[\gamma+\alpha]\to \gamma}(l_1^{[\gamma]+\alpha})=-l^{[\gamma]}-l^{-\alpha}T_{[\gamma]+\alpha]\to[\gamma]}(l^{[\gamma]+\alpha}),
$$  
we obtain
\begin{align*}
l^{[\gamma]}&+l^{-\alpha}T_{[\gamma]+\alpha]\to[\gamma]}(l^{[\gamma]+\alpha})+l_1^{\alpha}l^{-\alpha}l^{[\gamma]}+l^{-\alpha} \xi l^{[\gamma]}+(l^{-\alpha})^2\xi T_{[\gamma]+\alpha]\to[\gamma]}(l^{[\gamma]+\alpha})
\\
&+l_1^{\alpha}(l^{-\alpha})^2\xi l^{[\gamma]}-l_1^{\alpha}(l^{-\alpha})^2T_{[\gamma+\alpha]\to \gamma}(l_1^{[\gamma]+\alpha})+(l_{1}^{\alpha})^2(l^{-\alpha})^3l^{[\gamma]}\in\ellev(H)_{[\gamma]}.
\end{align*}

It follows from Proposition \ref{A1prop} that 
$$
l^{-\alpha}l^{[\gamma]}\in \ellev(H)_{[\gamma]},
$$
 and that 
$$
l_{1}^{\alpha}l_{1}^{[\gamma]+\alpha}\in \ellev(H)_{[\gamma]+\alpha}.
$$
 Here we can apply Proposition~\ref{A1prop} to the roots $\gamma+\alpha$ and $-\alpha$ because of the symmetry. Hence we can remove the summands that are multiples of $l^{-\alpha}l^{[\gamma]}$ and $l_{1}^{\alpha}T_{[\gamma]+\alpha\to[\gamma]}(l_{1}^{[\gamma]+\alpha})$. Therefore, we obtain
$$
l^{[\gamma]}+l^{-\alpha}T_{[\gamma]+\alpha]\to[\gamma]}(l^{[\gamma]+\alpha})+(l^{-\alpha})^2\xi T_{[\gamma]+\alpha]\to[\gamma]}(l^{[\gamma]+\alpha})\in\ellev(H)_{[\gamma]}.\eqno{(\#)}
$$
 
Further it follows from Proposition \ref{A1prop} that $l^{\alpha}l^{[\gamma]+\alpha}\in\ellev(H)_{[\gamma]+\alpha}$. Hence we can multiply the sum in the statement $(\#)$ by $l^{\alpha}$, and then remove the summands that are multiples of $l^{\alpha}T_{[\gamma]+\alpha]\to[\gamma]}(l^{[\gamma]+\alpha})$. Therefore,
 $$l^{\alpha}l^{[\gamma]}\in \ellev(H)_{[\gamma]}.$$

We can repeat all the previous reasoning replacing $\gamma$ by $\gamma+\alpha$ and $\alpha$ by $-\alpha$, and obtain that
$$
l^{-\alpha}l^{[\gamma]+\alpha}\in\ellev(H)_{[\gamma]+\alpha}.
$$
 Hence we can remove from the statement $(\#)$ all the summands that are multiples of $l^{-\alpha}T_{[\gamma]+\alpha]\to[\gamma]}(l^{[\gamma]+\alpha})$. Thus we obtain
\begin{equation*}
l^{[\gamma]}\in \ellev(H)_{[\gamma]}.\qedhere
\end{equation*}
\end{proof}
Therefore, it remains to prove Propositions \ref{2A1prop} and \ref{A2prop}.

\section{A $2A_1$-proof}
\label{Sec2A1}

In this section we prove Proposition \ref{2A1prop}. The main idea is based on the remark after the proof of the main lemma in \cite{VavGav}, which tells that in order to get into a parabolic subgroup one can use an element that can be written as $x_\alpha(\xi)x_\beta(\zeta)$, where $\angle(\alpha,\beta)=\pi/2$. That way is called a $2A_1$-proof.

\subsection{Parabolic subgroups}

Let $\alpha_1$,$\alpha_2\in \Phi$ and $\alpha_1\perp\alpha_2$. We introduce a notation for the following linear functional on the span of $\Phi$:  
$$
\vpi_{\alpha_1,\alpha_2}(\gamma)=(\alpha_1+\alpha_2,\gamma).
$$
Since $\Phi$ is simply laced, it follows that for any $\gamma\in \Phi$ we have $\vpi_{\alpha_1,\alpha_2}(\gamma)\in\{-2,-1,0,1,2\}$. Note that the values $\pm 1$ may fail to occur. The set of all roots such that the value of $\vpi_{\alpha_1,\alpha_2}$ on them is nonnegative is a parabolic set. We denote by $P_{\alpha_1,\alpha_2}$ the corresponding parabolic subgroup, and we denote by $U_{\alpha_1,\alpha_2}$ its unipotent radical. 

Further we introduce the following notation:
\begin{align*}
\Sigma_{\alpha_1,\alpha_2}&=\{\gamma\in\Phi\colon\vpi_{\alpha_1,\alpha_2}(\gamma)=2\},
\\
U'_{\alpha_1,\alpha_2}&=\<x_{\gamma}(\xi)\colon\gamma\in\Sigma_{\alpha_1,\alpha_2},\, \xi\in R\>\le U_{\alpha_1,\alpha_2}.
\end{align*}
In other words, the group $U'_{\alpha_1,\alpha_2}$ coincides with the group $U_{\alpha_1,\alpha_2}$ if the functional $\vpi_{\alpha_1,\alpha_2}$ gives 3-grading on our root system (that is the values $1$ and $-1$ do not occur on roots). If the functional $\vpi_{\alpha_1,\alpha_2}$ gives a non-degenerate 5-grading, then the group $U'_{\alpha_1,\alpha_2}$ is a derived subgroup of the group $U_{\alpha_1,\alpha_2}$.

\subsection{Extraction of generators}
The following proposition allows to prove that something belongs to elementary level by finding certain elements in the group $H\cap U'_{\alpha_1,\alpha_2}$. 

\begin{prop}
\label{2A1Extraction} Let $\alpha_1,$ $\alpha_2\in \Delta$ and $\alpha_1\perp\alpha_2$. Let $\gamma_1,$$\ldots,$$\gamma_k\in\Sigma_{\alpha_1,\alpha_2}$ be pairwise non-equivalent roots. Let elements $a_i\in M_{[\gamma_i]}$ be such that
$$
g=\prod_{i=1}^{k} x_{[\gamma_i]}(a_i)\in H.
$$

Then we have $a_i\in\ellev(H)_{[\gamma_i]}$ for all $1\le i\le k$.
\end{prop}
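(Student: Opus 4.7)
The plan is to proceed by induction on $k$. The base case $k = 1$ is immediate: $g = x_{[\gamma_1]}(a_1) \in H$ gives $a_1 \in \ellev(H)_{[\gamma_1]}$ directly from the definition of elementary level.

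The structural cornerstone for the induction is that the root subgroups $X_\gamma$ with $\gamma \in \Sigma_{\alpha_1,\alpha_2}$ commute pairwise. Indeed, for any $\gamma,\gamma' \in \Sigma_{\alpha_1,\alpha_2}$ one has $\vpi_{\alpha_1,\alpha_2}(\gamma+\gamma') = 4$, which cannot be the value of $\vpi_{\alpha_1,\alpha_2}$ on a root of a simply-laced system. Hence $U'_{\alpha_1,\alpha_2}$ is abelian, $g$ lives in it, and the factors $x_{[\gamma_i]}(a_i)$ commute, so the order in the product is irrelevant.

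For $k \ge 2$: since the blocks $[\gamma_1],\ldots,[\gamma_k]$ are pairwise distinct, we can pick $\alpha \in \Delta$ (possibly replacing it by $-\alpha$) such that the index set partitions non-trivially into $I_- = \{i : ([\gamma_i],\alpha) = -1\}$ and its complement. Because $x_\alpha(1) \in E(\Delta,R) \le H$, the commutator $[x_\alpha(1),g]$ lies in $H$. Using the Chevalley commutator formula and the pairwise commutativity above, this commutator simplifies to
\[
\prod_{i \in I_-} x_{[\gamma_i]+\alpha}\bigl(T_{[\gamma_i]\to[\gamma_i]+\alpha}(a_i)\bigr),
\]
up to contributions in strictly higher grading layers. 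Commuting again with $x_{-\alpha}(1) \in H$ and invoking Lemma~\ref{BackAndForth} (which gives $T_{[\gamma]+\alpha\to[\gamma]}\circ T_{[\gamma]\to[\gamma]+\alpha} = \id$) transports the factors back to their original blocks and produces $\prod_{i \in I_-} x_{[\gamma_i]}(\pm a_i) \in H$. Multiplying by $g^{-1}$ yields a complementary shorter product $\prod_{i \notin I_-} x_{[\gamma_i]}(\pm a_i) \in H$. Both are products of the form required by the proposition but with strictly fewer factors, so the induction hypothesis applies and yields $a_i \in \ellev(H)_{[\gamma_i]}$ for every $i$.

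The main obstacle is controlling the ancillary terms produced by the Chevalley commutator formula. When $\alpha$ is not orthogonal to $\alpha_1 + \alpha_2$, the intermediate roots $\gamma_i + \alpha$ may leave the grading layer $\Sigma_{\alpha_1,\alpha_2}$, and products $x_\alpha \cdot x_{\gamma_i} \cdot x_{\gamma_j}$ can generate additional root elements of $\vpi$-weight $\ge 3$. The natural remedy is to choose the separating $\alpha$ orthogonal to $\alpha_1 + \alpha_2$ whenever possible, so that all intermediate roots stay inside $\Sigma_{\alpha_1,\alpha_2}$ and the double commutator collapses cleanly; when no such $\alpha$ separates the required blocks, the stray terms must be pushed into $E(\Delta,R) \le H$ or absorbed into previously-extracted factors of $g$ by a secondary round of commutator manipulations. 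Once this bookkeeping is carried out — relying on condition $(*)$ to guarantee the existence of enough separating roots in $\Delta$ — the induction closes.
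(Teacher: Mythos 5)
Your base case, the observation that $U'_{\alpha_1,\alpha_2}$ is abelian, and the idea of descending on $k$ by commutating with elementary generators are all consistent with the paper's argument. But the induction does not close, and the acknowledged gap is exactly where the real work lies.

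When the separating root $\beta\in\Delta$ has $\vpi_{\alpha_1,\alpha_2}(\beta)=0$, a single commutator $[x_\beta(1),g]$ already keeps you inside $U'_{\alpha_1,\alpha_2}$ and kills the second factor, so you can argue by minimality without the return trip $[x_{-\beta}(1),\cdot]$; your double-commutator-plus-complement refinement is extra machinery but correct in that case. The genuine difficulty is that you may be forced to separate $[\gamma_1]$ and $[\gamma_2]$ by a $\beta\in\Delta$ with $\vpi_{\alpha_1,\alpha_2}(\beta)=\pm1$ or $\pm2$, and then there is \emph{no} remedy of the kind you sketch. You cannot in general ``push the stray terms into $E(\Delta,R)$'' or ``absorb them'': after a single commutator with such a $\beta$ the transported roots $\gamma_i+\beta$ no longer lie in $\Sigma_{\alpha_1,\alpha_2}$, and a second commutator with $x_{-\beta}(1)$ does not land you back inside $U'_{\alpha_1,\alpha_2}$ cleanly because $x_{\pm\beta}(1)$ no longer normalizes that subgroup.

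The paper's key move, which your plan is missing, is to \emph{change the parabolic pair rather than return to it}. If $\vpi_{\alpha_1,\alpha_2}(\beta)=-1$, one chooses signs so that $(\beta,\alpha_1)=-1$, $(\beta,\alpha_2)=0$, observes that the factors $x_{[\gamma_i]}(a_i)$ are central in all of $U_{\alpha_1,\alpha_2}$ (not merely $U'$), splits $[x_\beta(1),g]$ into a product of single commutators, and notes that all the shifted roots $\gamma_i+\beta$ now lie in $\Sigma_{\alpha_1+\beta,\,\alpha_2}$. The minimal-counterexample argument must therefore range over \emph{all} admissible pairs $(\alpha_1,\alpha_2)$, not over a fixed one; the descent is on $k$ but the grading functional is allowed to jump. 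The case $\vpi_{\alpha_1,\alpha_2}(\beta)=\pm2$ is handled by one more reduction (replace $\beta$ by $\beta+\alpha_1$) bringing you to the $\vpi=0$ case. Your phrasing ``induction on $k$'' for a fixed $(\alpha_1,\alpha_2)$ forecloses exactly this maneuver, and condition $(*)$ alone does not rescue it: $(*)$ guarantees a separating root exists, not that one orthogonal to $\alpha_1+\alpha_2$ exists. Without the layer-shift idea the proof is incomplete.
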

\begin{proof}
	Assume that for some $g$ the converse is true. Among all such $g$ we choose the one with the minimal $k$. Then $a_i\notin\ellev(H)_{[\gamma_i]}$ for any $i$ because otherwise, since $U'_{\alpha_1,\alpha_2}$ is abelian, it follows that we can remove this factor, and make $k$ smaller. In particular, we have $\gamma_i\notin\Delta$.
	
	If $k=1$, then there is nothing to prove. Let $k\ge 2$. Since the roots $\gamma_i$ are pairwise non-equivalent, it follows that there exists a root $\beta\in\Delta$ such that $([\gamma_1],\beta)\ne ([\gamma_2],\beta)$.
	
	\smallskip
	
	\noindent{\bf Case 1:} $\vpi_{\alpha_1,\alpha_2}(\beta)=0$.
	
	One of the numbers $([\gamma_1],\beta)$ and $([\gamma_2],\beta)$ is nonzero; without loss of generality, we may assume that it is the first one. Further replacing, if necessary, $\beta$ by $-\beta$, we may assume that $([\gamma_1],\beta)=-1$. 
	
	Set
	$$
	g_1=[x_{\beta}(1),g]=\Big[x_{\beta}(1),\prod_{i=1}^k x_{[\gamma_i]}(a_i)\Big]= \prod_{i=1}^k [x_{\beta}(1), x_{[\gamma_i]}(a_i)].
	$$
	The last equality holds true because $x_{\beta}(1)\in P_{\alpha_1,\alpha_2}$, hence it normalises $U'_{\alpha_1,\alpha_2}$, and the group $U'_{\alpha_1,\alpha_2}$ is abelian. 
	 
	For any $i$ we have either $([\gamma_i],\beta)\ne -1$, then we obtain $[x_{\beta}(1), x_{[\gamma_i]}(a_i)]=e$; or $([\gamma_i],\beta)=-1$, then we obtain $|[\gamma_i]|=1$ (because otherwise we can add to the root $\beta$ two distinct roots from the block $[\gamma_i]$ and obtain a root with value of the functional $\vpi_{\alpha_1,\alpha_2}$ equal to 4), hence $[x_{\beta}(1), x_{[\gamma_i]}(a_i)]=x_{[\gamma_i]+\beta}(T_{[\gamma_i]\to[\gamma_i]+\beta}(a_i))$.
	
	Therefore, the first factor is equal to $x_{[\gamma_1]+\beta}(T_{[\gamma_1]\to[\gamma_1]+\beta}(a_1))$. Since we have $a_1\notin\ellev(H)_{[\gamma_1]}$ and $\ellev(H)$ is a prelevel, it follows that 
	$$
	T_{[\gamma_1]\to[\gamma_1]+\beta}(a_1)\notin\ellev(H)_{[\gamma_1]+\beta}.
	$$
	In addition, since the second factor is trivial, the number of non-trivial factors became less than $k$, which contradicts the assumption about minimality of $k$.
	
	\smallskip
	
	\noindent{\bf Case 2:} $\vpi_{\alpha_1,\alpha_2}(\beta)=\pm 2$.
	
	Replacing, if necessary, $\beta$ by $-\beta$, we may assume that $\vpi_{\alpha_1,\alpha_2}(\beta)=-2$. Since $\gamma_i\in\Sigma_{\alpha_1,\alpha_2}$ and $\gamma_i$ are distinct from the roots $\alpha_1$,$\alpha_2$, it follows that $([\gamma_i],\alpha_1)=([\gamma_i],\alpha_2)=1$. Hence, since $([\gamma_1],\beta)\ne ([\gamma_2],\beta)$, it follows that the root $\beta$ is distinct from the roots $-\alpha_1$ and $-\alpha_2$. Hence we have $(\beta,\alpha_1)=-1$ and $\beta+\alpha_1\in\Delta$. In addition, we have $([\gamma_1],\beta+\alpha_1)\ne([\gamma_2],\beta+\alpha_1)$ because $([\gamma_1],\beta)\ne([\gamma_2],\beta)$ and $([\gamma_1],\alpha_1)=([\gamma_2],\alpha_1)$. Therefore, replacing $\beta$ by $\beta+\alpha_1$ we reduce the problem to the first case.
	\smallskip
		
	\noindent{\bf Case 3:} $\vpi_{\alpha_1,\alpha_2}(\beta)=\pm 1$.
	
	One of the numbers $([\gamma_1],\beta)$ and $([\gamma_2],\beta)$ is nonzero; without loss of generality, we may assume that it is the first one. Further replacing, if necessary, $\beta$ by $-\beta$, we may assume that
	$
	([\gamma_1],\beta)=-1
	$.
	Then we have $\vpi_{\alpha_1,\alpha_2}(\beta)=-1$ because otherwise we would obtain $\vpi_{\alpha_1,\alpha_2}(\gamma_1+\beta)=3$. Then without loss of generality, we may assume that $(\beta,\alpha_1)=-1$ and $(\beta, \alpha_2)=0$.
	
	Set
	$$
	g_1=[x_{\beta}(1),g]=\Big[x_{\beta}(1),\prod_{i=1}^k x_{[\gamma_i]}(a_i)\Big]= \prod_{i=1}^k \Big[x_{\beta}(1), x_{[\gamma_i]}(a_i)\Big].
	$$
	The last equality holds true because all the factors in the right hand side belong to the subgroup $U_{\alpha_1,\alpha_2}$ and elements $x_{[\gamma_i]}(a_i)$ are central in this subgroup.
	
	For any $i$ we have either $([\gamma_i],\beta)\ne -1$, then we obtain $[x_{\beta}(1), x_{[\gamma_i]}(a_i)]=e$; or $([\gamma_i],\beta)=-1$, then we obtain $|[\gamma_i]|=1$ (because otherwise we can add to the root $\beta$ two distinct roots from the block $[\gamma_i]$  and obtain a root with value of the functional $\vpi_{\alpha_1,\alpha_2}$ equal to $3$), hence 
	$$
	[x_{\beta}(1), x_{[\gamma_i]}(a_i)]=x_{[\gamma_i]+\beta}(T_{[\gamma_i]\to[\gamma_i]+\beta}(a_i)).
	$$
	
	All the sums $\gamma_i+\beta$ that are roots belong to that set $\Sigma_{\alpha_1+\beta,\alpha_2}$. The first factor is equal to 
	$$
	x_{[\gamma_1]+\beta}(T_{[\gamma_1]\to[\gamma_1]+\beta}(a_1)).
	$$
	Since $a_1\notin\ellev(H)_{[\gamma_1]}$ and $\ellev(H)$ is a prelevel, it follows that 
	$$
	T_{[\gamma_1]\to[\gamma_1]+\beta}(a_1)\notin\ellev(H)_{[\gamma_1]+\beta}.
	$$
	In addition, since the second factor is trivial, the number of non-trivial factors became less than $k$, which contradicts the assumption about minimality of~$k$.
\end{proof}
 
 \subsection{Bitandems}
 
We need the following notion from the paper \cite{Gvoz2A1}. 
 
 \begin{defn}
 We call an element of the set $G(\Phi,R)\times L(\Phi,R)$ a {\it bitandem} if it can be written as
$$
\big(\leftact{h}{(x_{\alpha_1}(\xi)x_{\alpha_2}(\zeta))},\leftact{h}{(\xi e_{\alpha_1}+\zeta e_{\alpha_2})}\big),
$$
 where $\alpha_1$,$\alpha_2\in\Phi$, $\alpha_1\perp \alpha_2$, $\xi$,$\zeta\in R$ and $h\in G(\Phi,R)$.
\end{defn}
 	 
Similarly, We call a family $(g(t),l(t))\in G(\Phi,R)\times L(\Phi,R),$ $t\in R$ a {\it bitandem with parameter} if it can be written as
$$
 g(t)=\leftact{h}{((x_{\alpha_1}(t\xi))x_{\alpha_2}(t\zeta))},\quad
 l(t)=\leftact{h}{(t\xi e_{\alpha_1}+t\zeta e_{\alpha_2})}.
$$
 
In this case we use the notation $g=g(1)$, $l=l(1)$ for short.
 
 Further let $\alpha_1$,$\alpha_2\in \Phi$, $\alpha_1\perp \alpha_2$; and let $(g,l)$ be a tandem. Set
 $$
 (g_1(t),l_1(t))=\big(\leftact{g}{(x_{\alpha_1}(tl^{-\alpha_2})x_{\alpha_2}(- tl^{-\alpha_1}))},\leftact{g}{(tl^{-\alpha_2}e_{\alpha_1}- tl^{-\alpha_1}e_{\alpha_2})}\big).
 $$
 
A bitandem with parameter that can be obtain in such a way and the corresponding bitandem $(g_1,l_1)=(g_1(1),l_1(1))$ will be called {\it special} with respect to the pair $\alpha_1$, $\alpha_2$. 
 
 \medskip
 
The proofs of the following lemmas, which help perform calculations with bitandems, are written in the paper \cite{Gvoz2A1}.
 
 \begin{lem}
\label{bitandemact} Let $(g(t),l(t))$ be a bitandem with parameter, and let $v\in L(\Phi,R)$. Then there exists $w\in L(\Phi,R)$ such that for any $t\in R$ we have
$$
\leftact{g(t)}{v}=v+t[l,v]+t^2w.
$$
In addition, we have $2w=[l,[l,v]]$.
\end{lem}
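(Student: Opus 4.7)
The plan is to decompose the first component of the bitandem into a product of two commuting tandems, apply Lemma~\ref{tandemactpre} twice, and collect terms in $t$. Since $\Phi$ is simply laced and $\alpha_1\perp\alpha_2$, the sum $\alpha_1+\alpha_2$ is not a root, so $[e_{\alpha_1},e_{\alpha_2}]=0$ and the root unipotents commute. Hence $g(t)=g_1(t)g_2(t)$, where $g_i(t)=\leftact{h}{x_{\alpha_i}(t\xi_i)}$ and each $(g_i(t),tu_i)$ is a tandem with $u_i=\leftact{h}{(\xi_i e_{\alpha_i})}$ and $l=u_1+u_2$.

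Lemma~\ref{tandemactpre} gives $\leftact{g_i(t)}{v}=v+t[u_i,v]+t^2\pi_i(v)$, where I write $\pi_i(v):=-\xi_i(\leftact{h^{-1}}{v})^{-\alpha_i}u_i$. I would compose these to expand $\leftact{g(t)}{v}=\leftact{g_1(t)}{(\leftact{g_2(t)}{v})}$ as a polynomial of formal degree four in $t$. The $t^0$ and $t^1$ coefficients immediately yield $v$ and $[l,v]$. The $t^3$ and $t^4$ coefficients vanish by three separate observations: $\pi_i(u_j)=0$ for $i\neq j$ because $\leftact{h^{-1}}{u_j}=\xi_j e_{\alpha_j}$ has no $e_{-\alpha_i}$-component; $\pi_i([u_j,v])=0$ for $i\neq j$ because $[e_{\alpha_j},\leftact{h^{-1}}{v}]$ has no $e_{-\alpha_i}$-component (the equation $\alpha_j+\gamma=-\alpha_i$ has no root solution $\gamma$, since $\alpha_i+\alpha_j$ is not a root, and neither the Cartan term $[e_{\alpha_j},v^d]$ nor the $\gamma=-\alpha_j$ contribution produces $e_{-\alpha_i}$); and $[u_1,\pi_2(v)]=0$ because $\pi_2(v)$ is a scalar multiple of $u_2$ and $[u_1,u_2]=0$. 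What remains is $\leftact{g(t)}{v}=v+t[l,v]+t^2 w$ with $w=\pi_1(v)+\pi_2(v)+[u_1,[u_2,v]]$.

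For the addendum $2w=[l,[l,v]]$, I would expand the double bracket as $[u_1,[u_1,v]]+[u_2,[u_2,v]]+[u_1,[u_2,v]]+[u_2,[u_1,v]]$. The Jacobi identity together with $[u_1,u_2]=0$ gives $[u_1,[u_2,v]]=[u_2,[u_1,v]]$, so the cross terms contribute $2[u_1,[u_2,v]]$. For the diagonal terms, a direct Chevalley-basis computation (valid since in a simply laced root system all root strings have length at most two) shows $[e_\alpha,[e_\alpha,v]]=-2v^{-\alpha}e_\alpha$ for any root $\alpha$; conjugating by $h$ then yields $[u_i,[u_i,v]]=2\pi_i(v)$. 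Summing gives $[l,[l,v]]=2\pi_1(v)+2\pi_2(v)+2[u_1,[u_2,v]]=2w$, as required.

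The main obstacle is purely bookkeeping: the four separate vanishings needed to eliminate the $t^3$ and $t^4$ coefficients, and the identification $[u_i,[u_i,v]]=2\pi_i(v)$. All of these reduce to the simply-laced facts that $\alpha_i+\alpha_j$ is not a root and that $(\ad e_\alpha)^3=0$ on the Chevalley algebra, but careful tracking of signs and of the conjugation by $h$ is required throughout.
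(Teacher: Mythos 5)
Your proof is correct, and the approach is the natural one: since $\alpha_1\perp\alpha_2$ in a simply laced system, $\alpha_1+\alpha_2$ is not a root, so $x_{\alpha_1}(t\xi)$ and $x_{\alpha_2}(t\zeta)$ commute, $g(t)$ factors as a product of two commuting tandems, and Lemma~\ref{tandemactpre} can be applied twice and composed. I have checked each of the four vanishings you cite: $\pi_i(u_j)=0$ and $\pi_i([u_j,v])=0$ for $i\neq j$ both come down to the absence of an $e_{-\alpha_i}$-component (using that $-\alpha_i-\alpha_j$ is not a root and that the Cartan and $h_{\alpha_j}$ contributions land in the wrong graded piece), $[u_1,\pi_2(v)]=0$ because $\pi_2(v)$ is a scalar multiple of $u_2$ and $[u_1,u_2]=0$, and the $t^4$ term is a scalar multiple of $\pi_1(u_2)=0$. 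The identity $(\ad e_\alpha)^2 w=-2w^{-\alpha}e_\alpha$ (valid in simply laced type because root strings through $\alpha$ have length at most two and $\alpha(h_\alpha)=2$) correctly gives $[u_i,[u_i,v]]=2\pi_i(v)$ after conjugating by $h$, and with the Jacobi identity killing the commutator of the cross terms this yields $2w=[l,[l,v]]$. Note that the present paper does not reprove this lemma but cites~\cite{Gvoz2A1} for it; your argument is the expected one and matches the role the lemma plays in the sequel.
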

 
 \begin{lem}
\label{bitandemsinparabolic} Let $(g_1(t),l_1(t))$ be a special with respect to the pair $\alpha_1, \alpha_2$ bitandem with parameter. Then $g_1(t)\in P_{\alpha_1,\alpha_2}$ for all $t\in R$.
\end{lem}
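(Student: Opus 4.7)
The plan is to reduce to a bitandem analogue of Lemma \ref{tandemsinparabolic} by first showing that $l_1(t)$ lies in the Lie subalgebra $L_{\mathfrak{S}}$ associated to the parabolic set $\mathfrak{S} = \{\gamma \in \Phi : \vpi_{\alpha_1,\alpha_2}(\gamma) \geq 0\}$, and then exploiting the orthogonality $\alpha_1 \perp \alpha_2$ to promote this to membership of $g_1(t)$ in $P_{\alpha_1,\alpha_2}$.

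For the first step, I would unfold
\[
l_1(t) = t\bigl(l^{-\alpha_2}\leftact{g}{e_{\alpha_1}} - l^{-\alpha_1}\leftact{g}{e_{\alpha_2}}\bigr),
\]
using Lemma \ref{tandemact} to rewrite each $\leftact{g}{e_{\alpha_i}}$ as $e_{\alpha_i} + [l, e_{\alpha_i}] - l^{-\alpha_i}\, l$. The point of the special choice of coefficients $(l^{-\alpha_2}, -l^{-\alpha_1})$ is that the two dangerous cross terms $\pm t\, l^{-\alpha_1} l^{-\alpha_2}\, l$ cancel, leaving
\[
l_1(t) = t\bigl(l^{-\alpha_2} e_{\alpha_1} - l^{-\alpha_1} e_{\alpha_2} + l^{-\alpha_2}[l, e_{\alpha_1}] - l^{-\alpha_1}[l, e_{\alpha_2}]\bigr).
\]
Every surviving piece is in $L_{\mathfrak{S}}$: one has $\vpi_{\alpha_1,\alpha_2}(\alpha_i) = 2$, so $e_{\alpha_i} \in L_{\mathfrak{S}}$; for any root $\beta$ occurring in $l$, $\vpi_{\alpha_1,\alpha_2}(\beta + \alpha_i) = \vpi_{\alpha_1,\alpha_2}(\beta) + 2 \geq 0$, because $\vpi_{\alpha_1,\alpha_2}$ takes values in $\{-2,-1,0,1,2\}$ on $\Phi$, so $[e_\beta, e_{\alpha_i}] \in L_{\mathfrak{S}}$; and the toric and $\beta = -\alpha_i$ contributions to $[l, e_{\alpha_i}]$ lie in $R e_{\alpha_i} \oplus D \subset L_{\mathfrak{S}}$. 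Hence $l_1(t) \in L_{\mathfrak{S}}$.

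Next I would formulate and prove a bitandem analogue of Lemma \ref{tandemsinparabolic}: any bitandem $(g', l')$ with $l' \in L_{\mathfrak{S}}$ satisfies $g' \in P_{\mathfrak{S}}$. The crucial feature is $\alpha_1 \perp \alpha_2$: since $[e_{\alpha_1}, e_{\alpha_2}] = 0$, the root elements $x_{\alpha_1}(\xi)$ and $x_{\alpha_2}(\zeta)$ commute and their combined adjoint action is a polynomial in the commuting nilpotent operators $\ad e_{\alpha_1}$ and $\ad e_{\alpha_2}$ with integer-valued divided-power coefficients (the higher divided powers vanish in the simply laced case). After conjugating by $h$ this rewrites the adjoint action of $g_1(t)$ as a polynomial in $\ad l_1(t)$ whose elementary factors all preserve $L_{\mathfrak{S}}$ once $l_1(t) \in L_{\mathfrak{S}}$. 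Hence $g_1(t)$ normalises $L_{\mathfrak{S}}$, which by the standard Lie-algebra characterisation of parabolic subgroups forces $g_1(t) \in P_{\mathfrak{S}}$.

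The principal technical obstacle is the caveat recorded in the Warning before the lemma: $L_{\mathfrak{S}}$ need not coincide with $\Lie(P_{\mathfrak{S}})$ when $G(\Phi, R)$ is not simply connected, so the final step ``adjoint-normaliser of $L_{\mathfrak{S}}$ equals $P_{\mathfrak{S}}$'' requires justification. I would handle this by the generic-element technique of Stepanov~\cite{StepUniloc}: membership of $g_1(t)$ in the closed subscheme $P_{\mathfrak{S}}$ of $G(\Phi,-)$ is a closed condition on the affine scheme of bitandems-with-parameter, so it suffices to verify it at the generic bitandem over the simply connected cover and a $\Q$-algebra, where $L_{\mathfrak{S}} = \Lie(P_{\mathfrak{S}})$ and the above argument applies unambiguously; specialisation then yields the assertion over an arbitrary commutative ring.
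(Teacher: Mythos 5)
Your proof is essentially correct and, as far as one can tell from the formula recorded in Lemma~\ref{bitandemsinparabolicCor} (which the paper says is extracted from the proof of Lemma~\ref{bitandemsinparabolic}), your step showing $l_1(t)\in L_{\mathfrak{S}}$ coincides with what the cited paper does. The computation
$$
l_1(t) = t\bigl(l^{-\alpha_2}e_{\alpha_1}-l^{-\alpha_1}e_{\alpha_2}+[l,\,l^{-\alpha_2}e_{\alpha_1}-l^{-\alpha_1}e_{\alpha_2}]\bigr)
$$
is exactly right, including the cancellation of the $\pm t\,l^{-\alpha_1}l^{-\alpha_2}\,l$ terms, and the verification that every surviving piece has nonnegative $\vpi_{\alpha_1,\alpha_2}$-weight is sound.

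It is worth contrasting the second half of your argument with the paper's proof of the $A_2$-analogue Lemma~\ref{A2tandemsinparabolic}, the only one of the two that is written out here. There, the key trick is to pass to the universal situation over $\Z[G][\xi]$, invert $l^{-\alpha_2}$, and observe that after this localisation the $A_2$-tandem actually \emph{is} a tandem, so Lemma~\ref{tandemsinparabolic} can be quoted directly. The Remark after Lemma~\ref{A2tandemact} points out explicitly that this route is not available for bitandems. Your substitute is the observation that, since $\alpha_1\perp\alpha_2$, the operator $\mathrm{Ad}(g_1(t))$ equals $\exp(\ad l_1(t))$ (the two factors $\exp(\ad a_i)$ commute because $[e_{\alpha_1},e_{\alpha_2}]=0$), and this manifestly preserves $L_{\mathfrak{S}}$ once $l_1(t)\in L_{\mathfrak{S}}$. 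Combined with the generic-element passage to the fraction field of $\Z[G][\xi,t]\otimes\Q$ --- where the Chevalley algebra is the tangent algebra regardless of isogeny type and a parabolic over a characteristic-zero field is the normaliser of its own Lie algebra --- and then specialising back via injectivity of $\Z[G][\xi,t]\hookrightarrow\Z[G][\xi,t]\otimes\Q\hookrightarrow K$, this gives a complete proof. Two small imprecisions: the divided powers $\tfrac{1}{2}(\ad e_{\alpha_i})^2$ do \emph{not} vanish in the simply laced case (only those of degree $\ge 3$ do), though they are integral; and one should say explicitly that the generic-element argument works over the \emph{fraction field} of the universal ring, since the normaliser characterisation of a parabolic is a statement over a field rather than over an arbitrary $\Q$-algebra, and since the ``affine scheme of bitandems-with-parameter'' is more safely described as $\Spec\Z[G][\xi,t]$ via the tandem parametrisation rather than as a closed subscheme of $G\times L$.
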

 
 From the proof of Lemma \ref{bitandemsinparabolic} one can obtain one more useful fact.
 
 \begin{lem}
\label{bitandemsinparabolicCor} Let $(g_1(t),l_1(t))$ be a special with respect to the pair $\alpha_1, \alpha_2$ bitandem with parameter that is constructed as above. Then the following equality holds true
 $$
 l_1=(l^{-\alpha_2}e_{\alpha_1}- l^{-\alpha_1}e_{\alpha_2})+[l,(l^{-\alpha_2}e_{\alpha_1}- l^{-\alpha_1}e_{\alpha_2})].
 $$
\end{lem}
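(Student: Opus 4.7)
The plan is to unfold the definition of $l_1$ and reduce the computation, via $R$-linearity of the adjoint action, to two independent applications of Lemma~\ref{tandemact}, one at each of the roots $\alpha_1$ and $\alpha_2$. By construction of the special bitandem, $l_1 = \leftact{g}{v}$ where $v = l^{-\alpha_2} e_{\alpha_1} - l^{-\alpha_1} e_{\alpha_2}$. Since $l^{-\alpha_1}$ and $l^{-\alpha_2}$ are scalars in $R$, $R$-linearity of the adjoint action gives
$$
l_1 = l^{-\alpha_2}\cdot\leftact{g}{e_{\alpha_1}} - l^{-\alpha_1}\cdot\leftact{g}{e_{\alpha_2}}.
$$

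Next, I would apply Lemma~\ref{tandemact} to the original tandem $(g,l)$ at each of the two roots $\alpha_1,\alpha_2$, obtaining
$$
\leftact{g}{e_{\alpha_i}} = e_{\alpha_i} + [l, e_{\alpha_i}] - l^{-\alpha_i}\cdot l \qquad (i=1,2).
$$
Substituting these into the displayed formula for $l_1$ and using $R$-linearity of the Lie bracket yields
$$
l_1 = v + [l,v] - l^{-\alpha_2} l^{-\alpha_1}\cdot l + l^{-\alpha_1} l^{-\alpha_2}\cdot l.
$$
Since $R$ is commutative, the last two terms cancel, leaving exactly $l_1 = v + [l,v]$, which is the claimed identity.

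The computation is essentially bookkeeping and I do not foresee any serious obstacle. The only nontrivial observation is that the two scalar corrections $-l^{-\alpha_2} l^{-\alpha_1}\cdot l$ and $+l^{-\alpha_1} l^{-\alpha_2}\cdot l$, produced by the two separate invocations of Lemma~\ref{tandemact}, cancel by the commutativity of $R$. Notably, the orthogonality $\alpha_1\perp\alpha_2$ plays no role in this reduction beyond ensuring that the factors $x_{\alpha_1}(l^{-\alpha_2})$ and $x_{\alpha_2}(-l^{-\alpha_1})$ in the definition of $g_1$ commute, so that the special bitandem is well defined; no further hypothesis on the root system or on the ring enters the argument.
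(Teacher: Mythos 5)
Your proof is correct and is exactly the computation the paper has in mind; the paper only remarks that the lemma follows ``from the proof of Lemma~\ref{bitandemsinparabolic}'', but the identical calculation is written out explicitly for the $A_2$-analogue in the proof of Lemma~\ref{A2tandemsinparabolic}, item~1, where $l_1$ is expanded by $R$-linearity, Lemma~\ref{tandemact} is applied at each root, and the two scalar correction terms cancel by commutativity of $R$. No gap; same approach.
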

 
 \subsection{The proof of Proposition \ref{2A1prop}}
 
 Let $\gamma\in\Phi\sm\Delta$, $\alpha_1$,$\alpha_2\in\Delta$ be such roots that $\alpha_1\perp\alpha_2$ and $([\gamma],\alpha_1)=([\gamma],\alpha_2)=-1$. Let $(g,l)$ be such tandem that $g\in H$. We must prove that $l^{-\alpha_1}l^{[\gamma]}\in \ellev(H)_{[\gamma]}$.
 
 We build the following bitandem with parameter that is special with respect to the pair $\alpha_1$,$\alpha_2$:
 $$
 (g_1(t),l_1(t))=\big(\leftact{g}{(x_{\alpha_1}(tl^{-\alpha_2})x_{\alpha_2}(- tl^{-\alpha_1}))},\leftact{g}{(tl^{-\alpha_2}e_{\alpha_1}- tl^{-\alpha_1}e_{\alpha_2})}\big).
 $$
 Further for any $t\in R$ we build the following tandem $$(g_{2,t},l_{2,t})=(\leftact{g_1(t)}{x_{\alpha_1}(1)},\leftact{g_1(t)}{e_{\alpha_1}}).$$
 
 Regardless of $t$, by Lemma \ref{bitandemsinparabolic} we have $g_1(t)\in P_{\alpha_1,\alpha_2}$; hence we obtain that $g_{2,t}\in H\cap U'_{\alpha_1,\alpha_2}$.
 
 Set
 $$
 g_{2,t}=\prod_{i=1}^{k} x_{[\gamma_i]}(a_{i,t}),\eqno{(1)}
 $$
 where $[\gamma_i]$ runs through all the blocks that are contained in $\Sigma_{\alpha_1,\alpha_2}$, and where $\gamma_1=\gamma+\alpha_1+\alpha_2$.
 
Let us prove the equality
 $$
 a_{1,t}=l_{2,t}^{[\gamma_1]}\eqno{(2)}
 $$
 Let us compute the coefficient $\left(\leftact{g_{2,t}}{e_{-\alpha_1}}\right)^{[\gamma]+\alpha_2}$ by two ways. On one hand, when we compute it directly using formula $(1)$, we obtain that $$\left(\leftact{g_{2,t}}{e_{-\alpha_1}}\right)^{[\gamma]+\alpha_2}=-T_{[\gamma_1]\to [\gamma]+\alpha_2}(a_{1,t}).$$ On the other hand, by Lemma \ref{tandemact} we have
 $$
 \leftact{g_{2,t}}{e_{-\alpha_1}}=e_{-\alpha_1}+[l_{2,t},e_{-\alpha_1}]+l_{2,t}^{\alpha_1}l_{2,t}.
 $$
 Since $g_1(t)\in P_{\alpha_1,\alpha_2}$, it follows that the element $l_{2,t}$ belongs to the sum of submodules $M_{[\gamma_i]}$. Hence $l_{2,t}^{[\gamma]+\alpha_2}=0$, which implies that
 $$
 \left(\leftact{g_{2,t}}{e_{-\alpha_1}}\right)^{[\gamma]+\alpha_2}=[l_{2,t},e_{-\alpha_1}]^{[\gamma]+\alpha_2}=-T_{[\gamma_1]\to [\gamma]+\alpha_2}(l_{2,t}^{[\gamma_1]}).
 $$
 Therefore, we proved that $a_{1,t}=l_{2,t}^{[\gamma_1]}$.
 
 Further by Lemma \ref{bitandemact} we have
 $$
 l_{2,t}^{[\gamma_1]}=(e_{\alpha_1}+t[l_1,e_{\alpha_1}]+t^2w)^{[\gamma_1]}=-tT_{[\gamma]+\alpha_2\to[\gamma_1]}(l_1^{[\gamma+\alpha_2]})+t^2 w^{[\gamma_1]}.
 $$
 Applying this equality, Equality~$(2)$, Proposition~\ref{2A1Extraction}, Lemma~\ref{EliminateSquareTerm}, and Proposition~\ref{EllevIsPrelev}, we obtain that $l_1^{[\gamma]+\alpha_2}\in\ellev(H)_{[\gamma]+\alpha_2}$.
 
 Further by Lemma \ref{bitandemsinparabolicCor} we have
 \begin{align*}
 l_1^{[\gamma]+\alpha_2}&=((l^{-\alpha_2}e_{\alpha_1}- l^{-\alpha_1}e_{\alpha_2})+[l,(l^{-\alpha_2}e_{\alpha_1}- l^{-\alpha_1}e_{\alpha_2})])^{[\gamma]+\alpha_2}
\\
&=l^{-\alpha_1}T_{[\gamma]\to [\gamma]+\alpha_2}(l^{[\gamma]}).
 \end{align*}
 Therefore, we have $l^{-\alpha_1}T_{[\gamma]\to [\gamma]+\alpha_2}(l^{[\gamma]})\in\ellev(H)_{[\gamma]+\alpha_2}$. Hence we obtain that $l^{-\alpha_1}l^{[\gamma]}\in\ellev(H)_{[\gamma]}$.
 
 \section{An $A_2$-proof}
 \label{SecA2}
 
In this section we prove Proposition \ref{A2prop}. The proof is similar to the one of Proposition \ref{2A1prop}; but now in order to get into a parabolic subgroup we use an element that can be written as $x_\alpha(\xi)x_\beta(\zeta)$, where $\angle(\alpha,\beta)=\pi/3$. Following \cite{VavGav}, we call this way an $A_2$-proof.
 
 \subsection{Parabolic subgroups}
 
 Let $\alpha_1,\alpha_2\in \Phi$ and $(\alpha_1,\alpha_2)=1$.  We introduce a notation for the following linear functional on the span of $\Phi$:  
 $$
 \vpi_{\alpha_1,\alpha_2}(\gamma)=(\alpha_1+\alpha_2,\gamma).
 $$
 Since $\Phi$ is simply laced, it follows that for any $\gamma\in \Phi$ we have $\vpi_{\alpha_1,\alpha_2}(\gamma)\in\{-3,-2,-1,0,1,2,3\}$. In addition, the values $3$ and $-3$ occur only on roots $\pm\alpha_1$ and $\pm\alpha_2$.  The set of all roots such that the value of $\vpi_{\alpha_1,\alpha_2}$ on them is nonnegative is a parabolic set. We denote by $P_{\alpha_1,\alpha_2}$ the corresponding parabolic subgroup, and we denote by $U_{\alpha_1,\alpha_2}$ its unipotent radical. 
 
 \subsection{Extraction of generators}
 
 The following proposition allows to prove that something belongs to elementary level by finding certain elements in the group $H\cap U_{\alpha_1,\alpha_2}$. 
 
 \begin{prop}
\label{A2Extraction}\! Let $\alpha_1,\alpha_2\!\in \!\Delta\!$ and $(\alpha_1,\alpha_2)\!=\!1$. Let $\gamma_1,\ldots,\gamma_k,$ $\delta_1$,$\ldots$,$\delta_n\!\in\!\Phi\!$ be pairwise non-equivalent roots such that ${\vpi_{\alpha_1,\alpha_2}(\gamma_i)\!=1\!}$ and $\vpi_{\alpha_1,\alpha_2}(\delta_i)=2$ for all $i$. Let elements $a_i\in M_{[\gamma_i]}$ and $b_i\in M_{[\delta_i]}$ be such that
\vspace{-4mm}
 	$$
 	g=\prod_{i=1}^{k} x_{[\gamma_i]}(a_i)\times\prod_{i=1}^{n} x_{[\delta_i]}(b_i)\in H.
 	$$
 	
 	Then we have $a_i\in\ellev(H)_{[\gamma_i]}$ for all $1\le i\le k$.
	\end{prop}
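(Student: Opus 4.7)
The plan is to mimic the inductive argument in the proof of Proposition~\ref{2A1Extraction}, now adapted to the non-abelian, two-step nilpotent radical $U_{\alpha_1,\alpha_2}$ of the $A_2$-parabolic. Writing $U^{(i)}$ for the subgroup generated by $X_\gamma$ with $\vpi_{\alpha_1,\alpha_2}(\gamma)\ge i$, two structural facts let one proceed as if $U$ were abelian. First, the level-three roots are exactly $\alpha_1$ and $\alpha_2$, both in $\Delta$, so $U^{(3)}\subset E(\Delta,R)\subset H$ and every level-three factor is automatically absorbed. Second, $[U^{(1)},U^{(2)}]\subset U^{(3)}\subset H$ and $[U^{(1)},U^{(1)}]\subset U^{(2)}$, so modulo $H$ the level-two factors are central and the level-one factors commute pairwise. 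Consequently, a factor $x_{[\gamma_j]}(a_j)$ with $a_j\in\ellev(H)_{[\gamma_j]}$ can be pushed to the far end of $g$ and removed, at worst modifying the level-two part; in a minimal counterexample with smallest $k$, every $a_i$ is therefore non-elementary.

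For $k\ge 2$ the inductive step runs parallel to the three cases of Proposition~\ref{2A1Extraction}. Choose $\beta\in\Delta$ separating two level-one classes, that is $([\gamma_1],\beta)\ne([\gamma_j],\beta)$ for some $j$, and, after a sign change, assume $([\gamma_1],\beta)=-1$. Depending on $\vpi_{\alpha_1,\alpha_2}(\beta)\in\{-3,\dots,3\}$, one reduces to the case $\vpi(\beta)=0$ by adding or subtracting $\alpha_1$ or $\alpha_2$ from $\beta$; such substitutions stay in $\Delta$ because $\alpha_2-\alpha_1\in\Delta$ by $(\alpha_1,\alpha_2)=1$, and the extremal case $\vpi(\beta)=\pm 3$ occurs only at $\pm\alpha_1,\pm\alpha_2$ and is handled directly. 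Once $\vpi(\beta)=0$, the element $x_\beta(1)$ lies in the Levi and normalises each $U^{(i)}$, so $[x_\beta(1),g]\in H\cap U_{\alpha_1,\alpha_2}$ has the same form as $g$ but with strictly fewer active level-one classes, and with leading coefficient $T_{[\gamma_1]\to[\gamma_1]+\beta}(a_1)$, still non-elementary by Lemma~\ref{BackAndForth}. This contradicts the minimality of $k$. When a block $[\gamma_i]$ has several roots, the Chevalley commutator formula supplies extra terms at levels two and three, harmless by the previous paragraph; in the three-root case Lemma~\ref{EliminateSquareTerm}, and hence the hypothesis $(**)$, absorbs the quadratic corrections, exactly as in the proof of Proposition~\ref{EllevIsPrelev}.

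The main obstacle, I expect, is the base case $k=1$: here $g=x_{[\gamma_1]}(a_1)\cdot\prod_j x_{[\delta_j]}(b_j)$ has a non-trivial level-two tail that cannot simply be truncated. The favourable situation is that there exists $\beta\in\Delta$ with $\vpi(\beta)=0$, $([\gamma_1],\beta)=-1$, and $([\delta_j],\beta)\ne -1$ for all $j$; then $[x_\beta(1),g]$ equals $x_{[\gamma_1]+\beta}(T_{[\gamma_1]\to[\gamma_1]+\beta}(a_1))$ modulo $H$, giving $a_1\in\ellev(H)_{[\gamma_1]}$ immediately by prelevel-ness. In the general case, when no such $\beta$ is available, I would iterate the commutator construction with a scalar parameter $t\in R$ so that the level-two tail depends polynomially on $t$ without constant term while the level-one coefficient is linear in $t$; Lemma~\ref{EliminateSquareTerm} then separates the linear contribution and forces $x_{[\gamma_1]+\beta}(T_{[\gamma_1]\to[\gamma_1]+\beta}(a_1))\in H$, whence $a_1\in\ellev(H)_{[\gamma_1]}$ by Proposition~\ref{EllevIsPrelev}.
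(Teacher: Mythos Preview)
Your overall architecture matches the paper, but the inductive step for $k\ge 2$ has an arithmetical gap. You propose to reduce to $\vpi_{\alpha_1,\alpha_2}(\beta)=0$ by adding or subtracting $\alpha_1$ or $\alpha_2$, but $\vpi(\alpha_1)=\vpi(\alpha_2)=3$ and $\vpi(\alpha_2-\alpha_1)=0$, so any such modification changes $\vpi(\beta)$ by a multiple of~$3$; from $\vpi(\beta)=\pm1$ or $\pm2$ you can never reach~$0$. The paper does not attempt this reduction. Instead it first notes that each level-one block has $([\gamma_i],\alpha_1)=1$ or $([\gamma_i],\alpha_2)=1$; if both types occur, $\beta=\alpha_2-\alpha_1$ already separates them. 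If all $[\gamma_i]$ are of one type and the separating $\beta$ has $\vpi(\beta)=\pm1$, the paper uses the \emph{double} commutator $[x_{-\beta}(1),[x_\beta(1),g]]$ (the inner step pushes the active block to level~$2$, the outer brings it back to level~$1$ with fewer companions); the case $\vpi(\beta)=\pm2$ is first shifted to $\pm1$ via $\beta\mapsto\beta+\alpha_1$.

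For $k=1$ you overlook the key simplification: every level-two block satisfies $([\delta_j],\alpha_1)=([\delta_j],\alpha_2)=1$, hence $([\delta_j],\alpha_2-\alpha_1)=0$, so the choice $\beta=\alpha_2-\alpha_1$ \emph{always} annihilates the entire level-two tail of $g$. Your ``favourable situation'' is therefore the only situation, and your proposed parametrised fallback for a ``general case'' is not needed for the~$b_j$. What remains is $[x_{\alpha_2-\alpha_1}(1),x_{[\gamma_1]}(a_1)]$, which for $|[\gamma_1]|\ge 2$ produces extra factors supported on sums $\gamma_1+\gamma_1'+(\alpha_2-\alpha_1)$ inside level~$2$; these are not in $H$ a priori and are not ``harmless''. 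The paper removes them by a further triple commutator with $x_{\pm\alpha_1}(1)$ when $|[\gamma_1]|=2$, and by the parametrised module argument with Lemma~\ref{EliminateSquareTerm} when $|[\gamma_1]|=3$. Your sketch gestures at the latter but does not identify $\alpha_2-\alpha_1$ as the root that makes the base case tractable in the first place.
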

 
 \begin{proof} 	
 	We fix some complete order on the set of such blocks $[\gamma]$ that $\vpi_{\alpha_1,\alpha_2}(\gamma)>0$ in such a way that the blocks with value of the functional $\vpi_{\alpha_1,\alpha_2}$ equal to $1$ go first, after them go blocks with this value equal to $2$, and the blocks of the roots $\alpha_1$,$\alpha_2$ go last. Any element of the group $U_{\alpha_1,\alpha_2}$ can be written as a product of elementary generators in our order. We call this product a canonical form of an element.
 	
 	Let us begin the proof of the proposition. 	Assume that for some $g$ the converse is true. Among all such $g$ we choose the one with the minimal $k$. Then $a_i\notin\ellev(H)_{[\gamma_i]}$ for any $i$ because otherwise, as one can easily see, we could reduce $k$ (possibly increasing $n$). In particular, $\gamma_i\notin\Delta$.

	We will consider several cases. 
 	
 	\smallskip
 	
 \noindent	{\bf Case 1:} $k\ge 2$.
 	
 	Note that for any block $[\gamma_i]$ we have either $([\gamma_i],\alpha_1)=1$ and $([\gamma_i],\alpha_2)=0$, or vice versa. 
 	
 	\smallskip
 		
 \noindent	{\bf Subcase 1.1:} there are such indices $i\ne j$ that $([\gamma_i],\alpha_1)=1$ and $([\gamma_j],\alpha_2)=1$.
 	
 	\smallskip
 	
 	In this case we have $([\gamma_i],\alpha_2-\alpha_1)=-1$ and $([\gamma_j],\alpha_2-\alpha_1)=1$. Let $g_1$ be the element that we obtain if we take the canonical form of the commutator $[x_{\alpha_2-\alpha_1}(1),g]$ and remove the terms that correspond to roots $\alpha_1$ and $\alpha_2$. Cle\-arly, the element $g_1$ belongs to the subgroup $H$. In its canonical form the num\-ber of factors that correspond blocks with value of the functional $\vpi_{\alpha_1,\alpha_2}$ equal to $1$ is less than $k$; in addition, $x_{[\gamma_i]+\alpha_2-\alpha_1}(T_{[\gamma_i]\to [\gamma_i]+\alpha_2-\alpha_1}(a_i))$ is one of such factors. Since $a_i\notin \ellev(H)_{[\gamma_i]}$, it follows that $T_{[\gamma_i]\to [\gamma_i]+\alpha_2-\alpha_1}(a_i)\notin \ellev(H)_{[\gamma_i]+\alpha_2-\alpha_1}$. Therefore, we obtain a contradiction with the minimality of $k$.
 	
 	\smallskip
 		
 \noindent	{\bf Subcase 1.2:} there are no such indices as in Subcase 1.1. Without loss of generality, we may assume that $([\gamma_i],\alpha_1)=1$ for all $1\le i\le k$. 
 	
 	Since the roots $\gamma_i$ are pairwise non-equivalent, it follows that there exists a root $\beta\in\Delta$ such that $([\gamma_1],\beta)\ne ([\gamma_2],\beta)$. The assumptions we made imply that the root $\beta$ does not coincide with roots $\pm\alpha_1$, $\pm\alpha_2$, $\pm(\alpha_1-\alpha_2)$.
 	
 	\smallskip
 	
 \noindent	{\bf Subcase 1.2.1:} $\vpi_{\alpha_1,\alpha_2}(\beta)=0$.
 	
 	Replacing, if necessary, $\beta$ by $-\beta$ we may assume that one of the numbers $([\gamma_1],\beta)$ and $ ([\gamma_2],\beta)$ is equal to $-1$.
 	
 	Let $g_1$ be the element that we obtain if we take the canonical form of the commutator $[x_{\beta}(1),g]$ and remove the terms that correspond to roots $\alpha_1$ and $\alpha_2$. It is easy to see that replacing $g$ by $g_1$ we reduce~$k$.

	\smallskip
	
\noindent	{\bf Subcase 1.2.2:} $\vpi_{\alpha_1,\alpha_2}(\beta)=\pm 1$.
	
	Replacing $\beta$ by $-\beta$ we may assume that $\vpi_{\alpha_1,\alpha_2}(\beta)=1$.
	
	Further we may also assume that one of the numbers $([\gamma_1],\beta)$ and $ ([\gamma_2],\beta)$ is equal to $-1$. Indeed, we only have problem if one of these numbers is $0$ and the other is $1$; without loss of generality, we have $([\gamma_1],\beta)=0$ and $([\gamma_2],\beta)=1$. Note that in this case we have $(\beta,\alpha_1)=1$ because otherwise equalities $(\gamma_2,\beta)=1$, $(\gamma_2,\alpha_1-\alpha_2)=1$ and $(\beta,\alpha_1-\alpha_2)=-1$ would imply that $\gamma_2=\beta+(\alpha_1-\alpha_2)\in\Delta$. Therefore, we can achive what we want replacing $\beta$ by $\beta+(\alpha_2-\alpha_1)$.
	
	So we have $\vpi_{\alpha_1,\alpha_2}(\beta)\!=\!1$ and one of the numbers $([\gamma_1],\beta)$ and $([\gamma_2],\beta)$ is equal to~$-1$.
	
	Let $g_1$ be the element that we obtain if we take the canonical form of the commutator $[x_{-\beta}(1),[x_{\beta}(1),g]]$ and remove the terms that correspond to roots $\alpha_1$ and $\alpha_2$. It is easy to see that replacing $g$ by $g_1$ we reduce~$k$.
	
	\smallskip
	
\noindent	{\bf Subcase 1.2.3:} $\vpi_{\alpha_1,\alpha_2}(\beta)=\pm 2$.
	
	Replacing $\beta$ by $-\beta$ we may assume that $\vpi_{\alpha_1,\alpha_2}(\beta)=-2$.
	
	Further replacing $\beta$ by $\beta+\alpha_1$, we reduce the problem to Subcase 1.2.2.
	
	\smallskip
	
	\noindent{\bf Case 2:} $k=1$. Without loss of generality, we may assume that $([\gamma_1],\alpha_1)=1$.
	
	\smallskip
	
	\noindent{\bf Subcase 2.1:} $|[\gamma_1]|=1$.
	
	In this case we have:
	$$
	[x_{\alpha_2-\alpha_1}(1),g]=[x_{\alpha_2-\alpha_1}(1),x_{[\gamma_1]}(a_1)]=x_{[\gamma_1]+\alpha_2-\alpha_1}\left(T_{[\gamma_1]\to [\gamma_1]+\alpha_2-\alpha_1}(a_1)\right).
	$$
	Hence we obtain that $T_{[\gamma_1]\to [\gamma_1]+\alpha_2-\alpha_1}(a_1)\in\ellev(H)_{[\gamma_1]+\alpha_2-\alpha_1}$, and thus we have $a_1\in\ellev(H)_{[\gamma_1]}$.
	
	\smallskip
	
	\noindent{\bf Subcase 2.2:} $|[\gamma_1]|=2$.
	
	Let $[\gamma_1]=\{\gamma_1,\gamma'_1\}$ and $a_1=\xi e_{\gamma_1}+\xi' e_{\gamma'_1}$. Then 
	\begin{align*}
	[x_{\alpha_2-\alpha_1}(1),g]&=[x_{\alpha_2-\alpha_1}(1),x_{[\gamma_1]}(a_1)]
	\\
	&=x_{[\gamma_1]+\alpha_2-\alpha_1}\left(T_{[\gamma_1]\to [\gamma_1]+\alpha_2-\alpha_1}(a_1)\right)x_{\gamma_1+\gamma'_1+\alpha_2-\alpha_1}(\pm \xi\xi'),
	\end{align*}
	and also
	$$
	\big[x_{\alpha_1}(1),\big[x_{-\alpha_1}(1),[x_{\alpha_2-\alpha_1}(1),g]\big]\big]=x_{\gamma_1+\gamma'_1+\alpha_2-\alpha_1}(\pm \xi\xi').
	$$
	Hence we obtain that 
	$$
	T_{[\gamma_1]\to [\gamma_1]+\alpha_2-\alpha_1}(a_1)\in\ellev(H)_{[\gamma_1]+\alpha_2-\alpha_1},
	$$
	and $a_1\in\ellev(H)_{[\gamma_1]}$.

	\smallskip
	
	\noindent{\bf Subcase 2.3:} $|[\gamma_1]|=3$.
	
	Let us recall that if there is a block of three roots, then we assume the condition $(**)$.
	
	Let $[\gamma_1]=\{\gamma_1,\gamma'_1,\gamma''_1\}$ and $a_1=\xi e_{\gamma_1}+\xi' e_{\gamma'_1}+\xi'' e_{\gamma''_1}$. Then we have the equality 
	$
	\gamma_1+\gamma'_1+\gamma''_1+\alpha_2-\alpha_1=\alpha_1.
	$
It can be verified by computing the inner products of the left hand side with the roots $\alpha_1$ and $\alpha_2$.
	
	Further we set
	\begin{align*}
	A&=\big\{v\in R^6\colon\forall\,t\in R\, x_{\gamma_1}(tv_1)x_{\gamma'_1}(tv_2)x_{\gamma''_1}(tv_3)
	\\
	&\times x_{\gamma_1+\gamma'_1+\alpha_2-\alpha_1}(tv_4)x_{\gamma_1+\gamma''_1+\alpha_2-\alpha_1}(tv_5)
	 x_{\gamma'_1+\gamma''_1+\alpha_2-\alpha_1}(tv_6)\in H\big\}.
	\end{align*}
	The set $A$ is an additive subgroup in $R^6$ because the root elements in its definition commute modulo the root subgroup of the root $\alpha_1$, which is contained in the subgroup $H$. It is also clear that $A=RA$, i.e. it is an  $R$-submodule. 
	
	Take arbitrary elements $s$,$t\in R$.
	
	From the equality
	\begin{align*}
	&[x_{\alpha_2-\alpha_1}(s),g]=[x_{\alpha_2-\alpha_1}(s),x_{[\gamma_1]}(a_1)]
	\\
	&=x_{[\gamma_1]+\alpha_2-\alpha_1}(sT_{[\gamma_1]\to[\gamma_1]+\alpha_2-\alpha_1}(a_1))x_{\gamma_1+\gamma'_1+\alpha_2-\alpha_1}(\pm s\xi\xi')\\
	&\times x_{\gamma_1+\gamma''_1+\alpha_2-\alpha_1}(\pm s\xi\xi'')
	 x_{\gamma'_1+\gamma''_1+\alpha_2-\alpha_1}(\pm s\xi'\xi'')x_{\alpha_1}(\pm s\xi\xi'\xi'') x_{\alpha_2}(\pm s^2\xi\xi'\xi'')
	\end{align*}
	we obtain that
	\begin{align*}
	g_1&=x_{[\gamma_1]+\alpha_2-\alpha_1}(sT_{[\gamma_1]\to[\gamma_1]+\alpha_2-\alpha_1}(a_1))x_{\gamma_1+\gamma'_1+\alpha_2-\alpha_1}(\pm s\xi\xi')
	\\
	&\times  x_{\gamma_1+\gamma''_1+\alpha_2-\alpha_1}(\pm s\xi\xi'') x_{\gamma'_1+\gamma''_1+\alpha_2-\alpha_1}(\pm s\xi'\xi'')\in H.
	\end{align*}
	Further from the equality
	\begin{align*}
	[x_{\alpha_1-\alpha_2}(t),g_1]&=[x_{\alpha_1-\alpha_2}(t),x_{[\gamma_1]+\alpha_2-\alpha_1}(sT_{[\gamma_1]\to[\gamma_1]+\alpha_2-\alpha_1}(a_1))]
	\\
	&=x_{[\gamma_1]}(tsa_1)x_{\gamma_1+\gamma'_1+\alpha_2-\alpha_1}(\pm ts^2\xi\xi')x_{\gamma_1+\gamma''_1+\alpha_2-\alpha_1}(\pm ts^2\xi\xi'')
	\\
	&\times x_{\gamma'_1+\gamma''_1+\alpha_2-\alpha_1}(\pm ts^2\xi'\xi'')
	x_{\alpha_1}(\pm ts^3\xi\xi'\xi'') x_{\alpha_2}(\pm t^2s^3\xi\xi'\xi'')
	\end{align*}
	we obtain that
	\begin{align*}
	g_2&=x_{[\gamma_1]}(tsa_1)x_{\gamma_1+\gamma'_1+\alpha_2-\alpha_1}(\pm ts^2\xi\xi')
	\\
	&\times x_{\gamma_1+\gamma''_1+\alpha_2-\alpha_1}(\pm ts^2\xi\xi'')x_{\gamma'_1+\gamma''_1+\alpha_2-\alpha_1}(\pm ts^2\xi'\xi'')\in H.
	\end{align*}
	Therefore, for any $s\in R$ we obtained that
	$$
	(s\xi,s\xi',s\xi'',\pm s^2\xi\xi',\pm s^2\xi\xi'',\pm s^2\xi'\xi'')\in A.
	$$
	Applying Lemma \ref{EliminateSquareTerm} we obtain that $(\xi,\xi',\xi'',0,0,0)\in A$; in particular, we have $a_1\in \ellev(H)_{[\gamma_1]}$.
 \end{proof}

\subsection{$A_2$-tandems}

Let us give the following definition.

\begin{defn}
 We call an element of the set $G(\Phi,R)\times L(\Phi,R)$ an {\it $A_2$-tandem}, if it can be written as
$$
(\leftact{h}{(x_{\alpha_1}(\xi)x_{\alpha_2}(\zeta))},\leftact{h}{(\xi e_{\alpha_1}+\zeta e_{\alpha_2})}),
$$
 where $\alpha_1$,$\alpha_2\in\Phi$, $(\alpha_1,\alpha_2)=1$, $\xi$,$\zeta\in R$ and $h\in G(\Phi,R)$.
\end{defn}

The following property of $A_2$-tandems is required for our calculations.

  \begin{lem}
	\label{A2tandemact} Let $(g,l)$ be an $A_2$-tandem, and let $\beta\in\Phi$. Then
  	$$
  	\leftact{g}{e_\beta}=e_\beta+[l,e_\beta]-l^{-\beta} \cdot l.
  	$$
  \end{lem}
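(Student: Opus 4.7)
The strategy is to mimic the proof of Lemma~\ref{tandemact} (the analogous statement for ordinary tandems), while exploiting the key structural fact that $(\alpha_1,\alpha_2)=1$ in the simply laced system $\Phi$ forces $\alpha_1+\alpha_2\notin\Phi$; hence $[e_{\alpha_1},e_{\alpha_2}]=0$ and $[x_{\alpha_1}(\xi),x_{\alpha_2}(\zeta)]=e$ by the Chevalley commutator formula.

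First, I would reduce to the case $h=e$. Using the adjoint-invariant symmetric bilinear form $\<\cdot,\cdot\>$ on $L(\Phi,R)$ normalized by $\<e_\gamma,e_{-\gamma}\>=1$ and vanishing on all other pairs of Chevalley basis vectors, one rewrites $l^{-\beta}=\<l,e_\beta\>$; then $G$-invariance of $[\cdot,\cdot]$ and $\<\cdot,\cdot\>$ reduces the claim to the identity
$$
\leftact{g_0}{v}=v+[l_0,v]-\<l_0,v\>\,l_0
$$
for $v=\leftact{h^{-1}}{e_\beta}$, where $g_0=x_{\alpha_1}(\xi)x_{\alpha_2}(\zeta)$ and $l_0=\xi e_{\alpha_1}+\zeta e_{\alpha_2}$. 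Since both sides are $R$-linear in $v$, it suffices to verify this for $v$ running over the Chevalley basis.

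Next, I would compute $\leftact{g_0}{v}$ by two successive applications of Lemma~\ref{tandemactpre} with $h=e$. Setting $a=\xi e_{\alpha_1}$, $b=\zeta e_{\alpha_2}$ and invoking $[a,b]=0$ together with $b^{-\alpha_1}=0$, the expansion collapses to
$$
\leftact{g_0}{v}=v+[l_0,v]+[a,[b,v]]-\xi^2 v^{-\alpha_1}e_{\alpha_1}-\zeta^2 v^{-\alpha_2}e_{\alpha_2}-\xi^2\zeta\,[e_{\alpha_2},v]^{-\alpha_1}e_{\alpha_1}.
$$
For $v=e_\beta$ the last summand vanishes automatically (its nonvanishing would force $\beta=-\alpha_1-\alpha_2\notin\Phi$), and three subcases remain. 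If $\beta\notin\{-\alpha_1,-\alpha_2\}$, the Kronecker terms disappear and $[e_{\alpha_1},[e_{\alpha_2},e_\beta]]=0$, because in the simply laced setting $\alpha_1+(\alpha_2+\beta)\in\Phi$ requires $(\alpha_1,\alpha_2+\beta)=-1$, i.e.\ $(\alpha_1,\beta)=-2$, forcing $\beta=-\alpha_1$. If $\beta=-\alpha_1$, the Jacobi identity combined with $[e_{\alpha_1},e_{\alpha_2}]=0$ yields $[e_{\alpha_1},[e_{\alpha_2},e_{-\alpha_1}]]=-e_{\alpha_2}$, and the corrections assemble into $-\xi^2 e_{\alpha_1}-\xi\zeta e_{\alpha_2}=-\xi\,l_0=-\<l_0,e_{-\alpha_1}\>l_0$. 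The case $\beta=-\alpha_2$ is symmetric via $[e_{\alpha_1},h_{\alpha_2}]=-e_{\alpha_1}$, and the Cartan case $v\in D$ is trivial since $\<l_0,v\>=0$ and every correction term vanishes.

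The main obstacle is the Jacobi-identity computation in the two symmetric subcases $\beta=-\alpha_1$ and $\beta=-\alpha_2$; once those signs are pinned down, everything else reduces to bookkeeping built on the single observation that $(\alpha_1,\alpha_2)=1$ prevents $\alpha_1+\alpha_2+\beta$ from being a root unless $\beta=-\alpha_i$.
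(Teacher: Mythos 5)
Your proof is correct, and it follows a genuinely different route from the paper's. The paper's argument is a two-line reduction: pass to the universal situation $R=\Z[G][\xi,\zeta]$, then invert $\zeta$ so that $x_{\alpha_1}(\xi)x_{\alpha_2}(\zeta)$ becomes conjugate to $x_{\alpha_1}(1)$ inside the $A_2$-subgroup --- this turns the $A_2$-tandem into an ordinary tandem, and the formula drops out of Lemma~\ref{tandemact} together with injectivity of $\Z[G][\xi,\zeta]\hookrightarrow\Z[G][\xi,\zeta,\zeta^{-1}]$. Your argument instead does a direct Chevalley-basis computation via two applications of Lemma~\ref{tandemactpre}, using only the elementary facts that $(\alpha_1,\alpha_2)=1$ forces $\alpha_1+\alpha_2\notin\Phi$ and $[e_{\alpha_1},e_{\alpha_2}]=0$. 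This is longer but more explicit, and it sidesteps the localization trick entirely. The one thing to tighten is the description of the bilinear form: a $G$-invariant form with $\<e_\gamma,e_{-\gamma}\>=1$ cannot vanish on all other Chevalley-basis pairs, since invariance forces $\<h_\alpha,h_\beta\>=(\alpha,\beta)$ on the Cartan part; what you actually use is only that $\<l,e_\beta\>=l^{-\beta}$ and that $e_\beta\perp D$, and those properties do hold for the genuine normalized invariant form, so the argument is unaffected --- just state the form correctly. Everything else, including the Jacobi-identity bookkeeping in the subcases $\beta=-\alpha_1$, $\beta=-\alpha_2$, and the Cartan case, checks out.
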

  \begin{proof}
  	It is enough to consider the universal situation, where $R=\Z[G][\xi,\zeta]$, $g=\leftact{g_{\gen}}{(x_{\alpha_1}(\xi)x_{\alpha_2}(\zeta))}$ and $l=\leftact{g_{\gen}}{(\xi e_{\alpha_1}+\zeta e_{\alpha_2})}$. Indeed, such an $A_2$-tandem can be specialised to any other. 
  	
  	The natural map $\Z[G][\xi,\zeta]\to \Z[G][\xi,\zeta,\zeta^{-1}]$ is injective. Hence it it enough to prove the equality in question for the ring $\Z[G][\xi,\zeta,\zeta^{-1}]$.
  	
  	It is easy to see that there exists such an element $h\in G(\Phi,Z[G][\xi,\zeta,\zeta^{-1}])$ that the equalities $x_{\alpha_1}(\xi)x_{\alpha_2}(\zeta)=\leftact{h}{x_{\alpha_1}(1)}$ and $\xi e_{\alpha_1}+\zeta e_{\alpha_2}=\leftact{h}{e_{\alpha_1}}$ hold true. Hence over the ring $\Z[G][\xi,\zeta,\zeta^{-1}]$ the pair $(g,l)$ is a tandem, and the equality in question holds true by Lemma~\ref{tandemact}.
  \end{proof}

	\begin{rem} In fact, the above argument shows something more general. Namely, if we consider the smallest closed subscheme (over $\Z$) of the scheme $G(\Phi,\,-\,)\times L(\Phi,\,-\,)$ that contains all tandems, then it will contain all $A_2$-tandems as well. This is not true for bitandems.
	\end{rem}
	
	 Further let $\alpha_1$,$\alpha_2\in \Phi$, $(\alpha_1,\alpha_2)=1$, and let $(g,l)$ be a tandem. Set
	$$
	(g_1,l_1)=\big(\leftact{g}{(x_{\alpha_1}(l^{-\alpha_2})x_{\alpha_2}(- l^{-\alpha_1}))},\leftact{g}{(l^{-\alpha_2}e_{\alpha_1}- l^{-\alpha_1}e_{\alpha_2})}\big).
	$$
	
	An $A_2$-tandem that can be obtained in such a way will be called {\it special} with respect to the pair $\alpha_1$, $\alpha_2$. By analogy with bitandems, we can prove the following lemma.
\begin{lem}
\label{A2tandemsinparabolic} Let $(g_1,l_1)$ be a special with respect to the pair $\alpha_1,$ $\alpha_2,$ $A_2$-tandem that is constructed as above. Then:
		\begin{enumerate}
			
			\item the following equality holds true
			$$
			l_1=(l^{-\alpha_2}e_{\alpha_1}- l^{-\alpha_1}e_{\alpha_2})+[l,(l^{-\alpha_2}e_{\alpha_1}- l^{-\alpha_1}e_{\alpha_2})];
			$$
			
			\item  $g_1\in P_{\alpha_1,\alpha_2}$.
		\end{enumerate}
\end{lem}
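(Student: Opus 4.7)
Part (1) follows from Lemma \ref{tandemact}. Writing $v = l^{-\alpha_2}e_{\alpha_1} - l^{-\alpha_1}e_{\alpha_2}$, we have $l_1 = \leftact{g}{v}$. Since the scalars $l^{-\alpha_1},l^{-\alpha_2} \in R$ commute with the adjoint action, applying Lemma \ref{tandemact} to each of $e_{\alpha_1}$ and $e_{\alpha_2}$ and taking the corresponding linear combination gives
\[
l_1 = v + [l, v] + \bigl(-l^{-\alpha_2} l^{-\alpha_1} + l^{-\alpha_1} l^{-\alpha_2}\bigr) l = v + [l, v],
\]
the correction term vanishing by commutativity of $R$.

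For part (2), the plan is to verify that $l_1 \in L_{\mathfrak{S}}$, where $\mathfrak{S} = \{\gamma \in \Phi : \vpi_{\alpha_1,\alpha_2}(\gamma) \geq 0\}$, and then to deduce $g_1 \in P_{\mathfrak{S}} = P_{\alpha_1,\alpha_2}$ from Lemma \ref{tandemsinparabolic} via a universal-ring descent. Since $\vpi(\alpha_i) = 3$, the summand $v$ lies in $L_{\mathfrak{S}}$, so by (1) I only need $[l, v]^{\gamma} = 0$ for every $\gamma \in \Phi$ with $\vpi(\gamma) < 0$. The $\gamma$-component of $[l, e_{\alpha_i}]$ is nonzero only if $\gamma - \alpha_i \in \Phi$; in a simply laced system this forces $(\gamma, \alpha_i) \geq 1$. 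A short case analysis using $(\alpha_1, \alpha_2) = 1$ rules this out: if $\gamma = -\alpha_j$ for some $j$, then $\gamma - \alpha_i$ is either $-2\alpha_i$ or $-(\alpha_1+\alpha_2)$, neither being a root (the latter because $(\alpha_1,\alpha_2) = 1 > 0$ forces $\alpha_1 + \alpha_2 \notin \Phi$); otherwise $(\gamma, \alpha_i) = 1$ together with $\vpi(\gamma) < 0$ yields $(\gamma, \alpha_{3-i}) = -2$, whence $\gamma = -\alpha_{3-i}$, contradiction.

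Finally, to pass from $l_1 \in L_{\mathfrak{S}}$ to $g_1 \in P_{\mathfrak{S}}$, I reprise the universal argument of Lemma \ref{A2tandemact}'s proof. Work over the universal coordinate ring of tandems $\Z[G][\xi]$, an integral domain in which $l^{-\alpha_1}$ is a non-zero-divisor, so the localization map to $\Z[G][\xi][(l^{-\alpha_1})^{-1}]$ is injective. Over the localization, the Chevalley commutator identity (valid since $\alpha_1 - \alpha_2 \in \Phi$) rewrites $x_{\alpha_1}(l^{-\alpha_2})\, x_{\alpha_2}(-l^{-\alpha_1}) = \leftact{x_{\alpha_1 - \alpha_2}(a)}{x_{\alpha_2}(-l^{-\alpha_1})}$ for a suitable $a$; combined with part (1), this exhibits $(g_1, l_1)$ as a genuine tandem in the localization. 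Applying Lemma \ref{tandemsinparabolic} then gives $g_1 \in P_{\mathfrak{S}}$ over the localization, hence over $\Z[G][\xi]$ by injectivity, and over arbitrary $R$ by specialization. The main obstacle is the combinatorial verification of $l_1 \in L_{\mathfrak{S}}$, where the boundary cases $\gamma \in \{-\alpha_1,-\alpha_2\}$ depend essentially on $\alpha_1 + \alpha_2 \notin \Phi$; the universal descent itself follows the template of Lemma \ref{A2tandemact} without further complication.
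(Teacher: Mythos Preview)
Your proof is correct and follows essentially the same strategy as the paper's: compute $l_1$ via Lemma~\ref{tandemact} (the cancellation of the quadratic term in $l$ is exactly the paper's calculation), then pass to the universal tandem over $\Z[G][\xi]$, localize at one of $l^{-\alpha_1}$ or $l^{-\alpha_2}$ so that the $A_2$-tandem becomes a genuine tandem, and invoke Lemma~\ref{tandemsinparabolic} after checking $l_1\in L_{\mathfrak{S}}$. The only cosmetic differences are that you localize at $l^{-\alpha_1}$ while the paper uses $l^{-\alpha_2}$ (symmetric choices), and you spell out the root combinatorics for $l_1\in L_{\mathfrak{S}}$ explicitly whereas the paper simply cites item~(1).
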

	\begin{proof}
		The first item can be checked by a direct calculation. Indeed, by Lemma~\ref{tandemact} we obtain
		\begin{align*}
		l_1&=l^{-\alpha_2}\cdot\leftact{g}{e_{\alpha_1}}-l^{-\alpha_1}\cdot\leftact{g}{e_{\alpha_2}}
		\\
		&=(l^{-\alpha_2}e_{\alpha_1}- l^{-\alpha_1}e_{\alpha_2})+[l,(l^{-\alpha_2}e_{\alpha_1}- l^{-\alpha_1}e_{\alpha_2})]+
		l^{-\alpha_2}l^{-\alpha_1}\cdot l-l^{-\alpha_1}l^{-\alpha_2}\cdot l
		\\
		&=(l^{-\alpha_2}e_{\alpha_1}- l^{-\alpha_1}e_{\alpha_2})+[l,(l^{-\alpha_2}e_{\alpha_1}- l^{-\alpha_1}e_{\alpha_2})].
		\end{align*}
		
		Now it is enough to check the second item in the universal situation, where $R=\Z[G][\xi]$, $g=\leftact{g_{\gen}}{x_{\alpha}(\xi)}$, $l=\leftact{g_{\gen}}{\xi e_{\alpha}}$.
		
		The natural map $\Z[G][\xi]\to\Z[G][\xi,(l^{-\alpha_2})^{-1}]$ is injective. Here we used that the ring $\Z[G]$ is a domain. Hence it is enough to prove the statement in question for the ring $\Z[G][\xi,(l^{-\alpha_2})^{-1}]$. 
		
As in the proof of Lemma~\ref{A2tandemact}, we obtain that over the ring $\Z[G][\xi,(l^{-\alpha_2})^{-1}]$ the pair $(g_1,l_1)$ is a tandem; hence we can apply Lemma~\ref{tandemsinparabolic}. In order to do so we must check that $l_1$ belong to the corresponding Lie-subalgebra, which follows from the first item.
	\end{proof}

\subsection{The proof of Proposition~\ref{A2prop}}

Let roots $\gamma\in\Phi\sm\Delta$, $\alpha_1$,$\alpha_2\in\Delta$ be such that $(\alpha_1,\alpha_2)=1$ and $([\gamma],\alpha_1)=([\gamma],\alpha_2)=-1$. Let $(g,l)$ be such a tandem that $g\in H$. We must prove that $l^{-\alpha_1}l^{[\gamma]}\in \ellev(H)_{[\gamma]}$.

We build the following $A_2$-tandem that is special with respect to the pair $\alpha_1$,$\alpha_2$: 
$$
(g_1,l_1)=(\leftact{g}{(x_{\alpha_1}(l^{-\alpha_2})x_{\alpha_2}(- l^{-\alpha_1}))},\leftact{g}{(l^{-\alpha_2}e_{\alpha_1}- l^{-\alpha_1}e_{\alpha_2})}).
$$
Then we build the following tandem $(g_2,l_2)=(\leftact{g_1}{x_{\alpha_1-\alpha_2}(1)},\leftact{g_1}{e_{\alpha_1-\alpha_2}})$.

By Lemma~\ref{A2tandemsinparabolic} we have $g_1\in P_{\alpha_1,\alpha_2}$; hence we obtain that $g_2\in H\cap P_{\alpha_1,\alpha_2}$.

Consider the following subsystems of the root system $\Phi$:
\begin{align*}
A_1&=\{\pm(\alpha_1-\alpha_2)\},
\\
A_2&=\{\pm\alpha_1,\pm\alpha_2,\pm(\alpha_1-\alpha_2)\},
\\
\Theta&=\{\beta\in\Phi\colon (\beta,\alpha_1)=(\beta,\alpha_2)=0\}.
\end{align*}
	
It is easy to see that  $\{\beta\in\Phi\colon \vpi_{\alpha_1,\alpha_2}(\beta)=0\}=\Theta\cup A_1$. This set is the symmetric part of the parabolic set of roots $\{\beta\in\Phi\colon \vpi_{\alpha_1,\alpha_2}(\beta)\ge 0\}$. Therefore, our elements can be written as $g_1=u_1h_1$ and $g_2=u_2h_2$, where $h_1$,$h_2\in T(\Phi,R)G(\Theta+A_1,R)$ and $u_1$,$u_2\in U_{\alpha_1,\alpha_2}$.

Note that the group scheme $G(\Theta+A_1,\,-\,)$ is a direct product of schemes $G(\Theta+A_1,\,-\,)=G(\Theta,\,-\,)\times G(A_1,\,-\,)$. Here we must check that the intersection of subschemes $G(\Theta,\,-\,)$ and $G(A_1,\,-\,)$ is trivial. In order to do so let us notice that the elements of subscheme $G(\Theta,\,-\,)$ commutes with the elements of the root subgroups $X_{\alpha_1}$ and $X_{\alpha_2}$; and it is easy to see that any element of the subscheme $G(A_1,\,-\,)$ that commutes with these subgroups is trivial.

Therefore, our elements can be written as $h_1=h'_1h''_1$, where $h'_1\in G(A_1,\,-\,)$ and $h''_1\in T(\Phi,R)G(\Theta,\,-\,)$. Hence we have $h_2=\leftact{h_1}{x_{\alpha_1-\alpha_2}(1)}=\leftact{h'_1}{x_{\alpha_1-\alpha_2}(\eps)}$ for some $\eps\in R^*$.

It is well known (see, for example, \cite{Taddei}) that the elementary subgroup $E(A_2,R)$ is normal in the group $G(A_2,R)$. Hence 
$$
h_2\in E(A_2,R)\le E(\Delta,R)\le H
$$
Thus we obtain that $u_2\in H$.

Let
\vspace{-3mm}
$$
u_2=\prod_{i=1}^{k} x_{[\gamma_i]}(a_i)\cdot\prod_{i=1}^{n} x_{[\delta_i]}(b_i)\cdot x_{\alpha_1}(\eta)x_{\alpha_2}(\theta),
$$
where $\gamma_1$,$\ldots$,$\gamma_k$,$\delta_1$,$\ldots$,$\delta_n\in\Phi$ are pairwise non-equivalent roots such that we have $\vpi_{\alpha_1,\alpha_2}(\gamma_i)=1$ and $\vpi_{\alpha_1,\alpha_2}(\delta_i)=2$ for all indices $i$. In addition we may assume that $[\gamma_1]=[\gamma]+\alpha_1$ and $[\gamma_2]=[\gamma]+\alpha_2$.

Then by Proposition~\ref{A2Extraction}, we have  $a_1\in \ellev(H)_{[\gamma_1]}$ and $a_2\in\ellev(H)_{[\gamma_2]}$.

The next step is to prove that $l_2^{[\gamma_1]}\in \ellev(H)_{[\gamma_1]}$.

It follows from the first item of Lemma~\ref{A2tandemsinparabolic} that the coefficients of the element $l_1$ in roots with negative value of the functional $\vpi_{\alpha_1,\alpha_2}$ vanish. Computing the element $l_2$ by Lemma~\ref{A2tandemact}, we obtain that such coefficient vanish for it as well.

Further we compute the element $(\leftact{g_2}{e_{-\alpha_1}})^{[\gamma]}$ by two ways.

 On one hand, computing it by Lemma~\ref{tandemact}, considering what is said above, we obtain that $(\leftact{g_2}{e_{-\alpha_1}})^{[\gamma]}=l_2^{[\gamma_1]}$. 

On the other hand, we have $h_2\in G(A_1,R)$; hence $\leftact{h_2}{e_{-\alpha_1}}=\xi e_{-\alpha_1}+\zeta e_{-\alpha_2}$ for some $\xi$,$\zeta\in R$. Therefore, 
\begin{align*}
(\leftact{g_2}{e_{-\alpha_1}})^{[\gamma]}&=\left(\leftact{u_2}{(\xi e_{-\alpha_1}+\zeta e_{-\alpha_2})}\right)^{[\gamma]}
\\
&=-\xi T_{[\gamma_1]\to[\gamma]}(a_1)-\zeta T_{[\gamma_2]\to[\gamma]}(a_2)\in \ellev(H)_{[\gamma]}.
\end{align*}

So we have $l_2^{[\gamma_1]}\in \ellev(H)_{[\gamma_1]}$. Similarly we have $l_2^{[\gamma_2]}\in \ellev(H)_{[\gamma_2]}$.

Further computing the element $l_2^{[\gamma_1]}$ by Lemma~\ref{A2tandemact} we obtain that
$$
l_2^{[\gamma_1]}=-T_{[\gamma_2]\to [\gamma_1]}(l_1^{[\gamma_2]})-l_1^{\alpha_2-\alpha_1} l_1^{[\gamma_1]}.
$$
Therefore, $T_{[\gamma_2]\to [\gamma_1]}(l_1^{[\gamma_2]})+l_1^{\alpha_2-\alpha_1} l_1^{[\gamma_1]}\in \ellev(H)_{[\gamma_1]}$. Hence 
 $$
 l_1^{[\gamma_2]}+l_1^{\alpha_2-\alpha_1} T_{[\gamma_1]\to[\gamma_2]}(l_1^{[\gamma_1]}) \in \ellev(H)_{[\gamma_2]}.\eqno(1)
 $$

Similarly considering the tandem $(\leftact{g_1}{x_{\alpha_2-\alpha_1}(1)},\leftact{g_1}{e_{\alpha_2-\alpha_1}})$ we obtain that

$$
T_{[\gamma_1]\to [\gamma_2]}(l_1^{[\gamma_1]})+l_1^{\alpha_1-\alpha_2} l_1^{[\gamma_2]}\in \ellev(H)_{[\gamma_2]}.\eqno(2)
$$

Computing the element $l_2^{[\gamma_2]}$ by Lemma~\ref{A2tandemact} we obtain that
$$
l_2^{[\gamma_2]}=-l_1^{\alpha_2-\alpha_1}l_1^{[\gamma_2]}.
$$

Hence we also have
$$
l_1^{\alpha_2-\alpha_1}l_1^{[\gamma_2]}\in\ellev(H)_{[\gamma_2]}.\eqno(3)
$$

Applying (1), (2) and (3) we obtain that
\begin{align*}
l_1^{[\gamma_2]}=\left(l_1^{[\gamma_2]}+l_1^{\alpha_2-\alpha_1} T_{[\gamma_1]\to[\gamma_2]}(l_1^{[\gamma_1]})\right)&-l_1^{\alpha_2-\alpha_1}\left(T_{[\gamma_1]\to [\gamma_2]}(l_1^{[\gamma_1]})+l_1^{\alpha_1-\alpha_2} l_1^{[\gamma_2]}\right)
\\
&+l_1^{\alpha_1-\alpha_2}\left(l_1^{\alpha_2-\alpha_1}l_1^{[\gamma_2]} \right)\in \ellev(H)_{[\gamma_2]}.
\end{align*}

Finally, computing the element $l_1^{[\gamma_2]}$ by the first item of Lemma~\ref{A2tandemsinparabolic} we obtain that $l_1^{[\gamma_2]}=l^{-\alpha_1}T_{[\gamma]\to [\gamma_2]}(l^{[\gamma]})$. Thus, we have $l^{-\alpha_1}T_{[\gamma]\to [\gamma_2]}(l^{[\gamma]})\in \ellev(H)_{[\gamma_2]}$. Hence $l^{-\alpha_1}l^{[\gamma]}\in\ellev(H)_{[\gamma]}$.

\section{Special cases and corollaries for multiply laced systems}
\label{SecSpecialCases}

In this section we consider the most interesting special cases of Theorem \ref{sandwich}. We also show that for some subsystems of multiply laced systems the description of overgroups can be obtained as a corollary of Theorem \ref{sandwich}.

\setcounter{subsection}{-1}

\subsection{Ideal levels}

A collection of ideals $\sigma=\{\sigma_\alpha\}_{\alpha\in\Phi}$ of the ring $R$ is called a {\it net of ideals}, if the following conditions hold true.
\begin{enumerate}
	\item If $\alpha$, $\beta$, $\alpha+\beta\in\Phi$, then $\sigma_\alpha\sigma_\beta\sub \sigma_{\alpha+\beta}$.
	
	\item If $\alpha\in\Delta$, then $\sigma_\alpha=R$.
\end{enumerate} 

Assume that the subsystem $\Delta\le\Phi$ is such that every block has one root. Then it is easy to see that almost levels are in one-to-one correspondence with nets of ideals.

If we identify an almost level with the corresponding net of ideals, then the algebra $L_{\max}(\sigma)$ defined in the present paper consides with the algebra $L(\sigma)$, defined in \cite{Gvoz2A1}:
$$
L_{\max}(\sigma)=L(\sigma)=D\oplus\bigoplus_{\alpha\in\Phi}\sigma_\alpha e_\alpha\le L(\Phi,R).
$$

Hence the group $S(\sigma)$ defined in the present paper consides with the group $S(\sigma)$ defined in \cite{Gvoz2A1}. Let us also note that in this case any almost level is a level, see \cite[Corollary 1]{Gvoz2A1}. 

Therefore, the first part of the main result in \cite{Gvoz2A1} is a special case of the main result of the present paper. 

\subsection{$\textbf{D}_{n-1}\le \textbf{D}_n$}

Consider the subsystem $D_{n-1}\le D_n$, where $n\ge 3$. It is easy to see that the condition $(*)$ holds true. In addition, if $n\ge 4$, then the condition $(*{*}*)$ hold true. All the blocks whose roots does not belong to the subsystem consists of two roots. In addition, between any two such blocks there is a path with every step being an addition of root from the subsystem. Hence in this case a prelevel can be uniquely recovered from one its component. 

We recall that the algebra $L(D_n,R)=\mathfrak{so}(2n,R)$ can be defined as the algebra of all such $(2n\times 2n)$ matrices~$l$ over the ring $R$ that $fl+l^tf=0$, where
$$
f=\begin{pmatrix}
	&       & 1\\
	&\reflectbox{$\ddots$} &  \\
	1 &       &
\end{pmatrix}
$$

We get used to numerate rows and columns of these matrices as follows: 
$$
1,\ldots,n,-n,\ldots,-1.
$$
 We denote by $e_{i,j}$ the corresponding matrix units.

In this case the set of matrices $\{e_{i,j}-e_{-j-i}\colon i\ne\pm j\}$ coincides with the set of elements $\{\pm e_{\alpha}\colon \alpha\in D_n\}$. In addition, if $i,j\ne \pm n$, then the corresponding root belongs to $D_{n-1}$.

We choose signs in such a way that matrices $e_{1,n}-e_{-n,-1}$ and $e_{1,-n}-e_{n,-1}$ were root elements and not the opposite to them. We denote by $\delta$ and $\delta'$ the corresponding roots. Note that these roots form a block. 

Elements $e_{\delta}$ and $e_{\delta'}$ form a basis of the module $M_{[\delta]}$. Using this basis we identify the module $M_{[\delta]}$ with $R^2$. Therefore, to every prelevel we put in correspondence a submodule in $R^2$; and the prelevel can be uniquely recovered from this submodule. We identify a prelevel with the corresponding submodule.

Further, if $n=3$, then we assume that the ring $R$ satisfies the condition $(**)$. Let us show that, firstly, any submodule in $R^2$ occur as a prelevel; and secondly, every prelevel is a level. In order to do so we present an overgroup $E(D_{n-1},R)\le H\le G(D_n,R)$ such that its level is given by a prescribed submodule $A\le R^2$. In addition, since the definitions of prelevel and level do not depend on the choice of a group in the given isogeny class, it follows that we can choose the most convenient one. That would be the special orthogonal group $G(D_n,R)=\mathrm{SO}(2n,R)$, which is defined as the group of $2n\times 2n$ matrices that preserve the following quadratic form
$$
x_1x_{-1}+\ldots+x_{n}x_{-n},
$$
and have trivial Dickson invariant. In this case we have
\begin{align*} 
&x_{\delta}(\xi)=e+\xi(e_{1,n}-e_{-n,-1}),\\
&x_{\delta'}(\zeta)=e+\zeta(e_{1,-n}-e_{n,-1}),\\
&x_{[\delta]}(\xi,\zeta)=x_{\delta}(\xi)x_{\delta'}(\zeta)=e+\xi(e_{1,n}-e_{-n,-1})+\zeta(e_{1,-n}-e_{n,-1})-\xi\zeta e_{1,-1},
\end{align*}
where $e$ is the identity matrix.

We fix a submodule $A\le R^2$, and set
$$
\mathfrak{I}=\{a\in M(2n,R)\colon \forall i,\; (a_{i,n},a_{i,-n})\in A\}.
$$
It is easy to see that $\mathfrak{I}$ is a left ideal of the matrix ring $M(2n,R)$. Hence the set
$$
H=\mathrm{SO}(2n,R)\cap (e+\mathfrak{I})
$$
is a subgroup of the group $\mathrm{SO}(2n,R)$. It follows from the definition that $x_{[\delta]}(\xi,\zeta)\in H$ if and only if $(\xi,\zeta)\in A$. Therefore, 
$$
\lev(H)=\ellev(H)=A.
$$

So we establish that levels correspond to submodules in $R^2$. Consider the overgroup $E(D_{n-1},R)\le G(B_{n-1},R)\le G(D_n,R)$. Here the first embedding is an embedding of a subsystem subgroup; and the second one is obtained by folding. The level of this overgroup is the diagonal in $R^2$. The same is true for the group $E(B_{n-1},R)$.

If an overgroup $E(D_{n-1},R)\le H\le G(D_n,R)$ is contained in the group $G(B_{n-1},R)$, then its level is contained in the diagonal; and such submodules are in one-to-one correspondence with ideals of the ring $R$. Therefore, overgroups $E(D_{n-1},R)$ in $G(B_{n-1},R)$ are described by ideals of the ring $R$. Similarly overgroups $E(B_{n-1},R)$ in $G(D_n,R)$ are described by ideals of the ring $R$.

All these results for orthogonal groups are special cases of the result of the paper by E.~Voronetsky \cite{VoronNormilised} on odd unitary groups. It is said explicitly in this paper about overgroups $\mathrm{EO}(2n-2,R)$ and $\mathrm{O}(2n-1,R)$, and about overgroups $\mathrm{EO}(2n-1,R)$ in $\mathrm{O}(2n,R)$. It is not mentioned in \cite{VoronNormilised} that the description of overgroups $\mathrm{EO}(2n-2,R)$ in $\mathrm{O}(2n,R)$ is a special case; however its author told me that it is true. 

\begin{rem}
 In a general case, when there are blocks of two or three roots, one can ask whether it is true that any submodule in $R^2$ (resp. in $R^3$) can be a component of a prelevel. In other words, is it true that we cannot walking on blocks and applying $T$-operators go in a circle and change sign at one of the components without changing sign at the other? This would mean that the components of a prelevel must be stable with respect to this operation. The answer is yes.
	
It can be shown that for any block of two roots the problem for the subsystem $\Delta\le\Phi$ includes the problem for a subsystem $\Delta'\le D_n$ such that the subsystem $\Delta'$ is contained in $D_{n-1}$, and this block is a block with respect to the subsystem $D_{n-1}\le D_n$. Then it follows from the reasoning above that the described above situation with signs is impossible. 

Similarly for any block of three roots the problem for the subsystem $\Delta\le\Phi$ includes the problem for a subsystem $A_2\le D_4$ and this block is a block with respect to this subsystem. Considering the overgroup $E(A_2,R)\le G(G_2,R)\le G(D_4,R)$, we obtain that the diagonal can be a component of a level. Hence here the described above situation with signs is impossible as well. 
\end{rem}

\subsection{$\textbf{4A}_1\le\textbf{E}_6$}

Consider the subsystem $4A_1\le E_6$. It is easy to see that the condition $(*)$ holds true. Blocks that have roots not in the subsystem form 7 orbits with respect to the action of the Weyl group $W(4A_1)$. One of these orbits contains blocks of one root, and 6 orbits contains blocks of two roots. 

Therefore, a prelevel is given by a collection of one ideal and six submodules in $R^2$. In order to be an almost level or a level such a collection must satisfy certain conditions that we will not try to write down explicitly. However, in order to describe overgroups of $E(4A_1,R)$ in $G(F_4,R)$ it is enough to consider only such levels that their components in blocks of two roots are contained in the diagonal. Almost levels with such property are in natural one-to-one correspondence with collections of ideals $\sigma=\{\sigma_\alpha\}_{\alpha\in F_4}$ of the ring $R$ that satisfy the following conditions:
\begin{enumerate}
	\item if $\alpha$,$\beta\in F_4$ and $\angle(\alpha,\beta)>\tfrac{\pi}{2}$, then $\sigma_\alpha\sigma_\beta\sub \sigma_{\alpha+\beta}$;
	
	\smallskip
	
	\item if $\alpha$,$\beta\in F_4$ are such orthogonal short roots that $\alpha+\beta\in F_4$, then
	$2\sigma_\alpha\sigma_\beta\sub \sigma_{\alpha+\beta}$;
	
	\smallskip
	
	\item if $\alpha\in 4A_1$, then $\sigma_\alpha=R$.
\end{enumerate} 

We identify almost levels with the corresponding collections of ideals. Assume, for simplicity, that $2\in R^*$. Then such collections are nets of ideals. It is easy to see that in this case all of them are levels. In addition, the intersection of the corresponding subgroup $S(\sigma)$ with the subgroup $G(F_4,R)$ coincides with the subgroup $S(\sigma)$ defined in the paper \cite{GvozInside}. In this paper it is shown how one can enlarge the subgroup $E(\sigma)$ so that the new subgroup $\hat{E}(\sigma)$ were normal in $S(\sigma)$.

For the description of overgroups of $D_4$ in $F_4$ we make the following remarks. Firstly, i this case all the almost levels are levels even if we do not require the invertability of two. Secondly, we have $\hat{E}(\sigma)=E(\sigma)$.

Note that as a corollary we obtain the description of overgroups of $E(F_4,R)$ in $E(E_6,R)$. It were obtained earlier in the paper \cite{LuzF4E6} by A.~Luzgarev without any assumptions on the ground ring. In our proof we also do not require the condition $(**)$ because the subsystem $D_4\le E_6$ satisfies the condition $(*{*}*)$.

\subsection{$\textbf{lA}_1\le\textbf{A}_{2l-1}$}

Consider the subsystem $lA_1\le A_{2l-1}$, where $l\ge 2$. It is easy to see that the condition $(*)$ holds true. All the blocks whose roots does not belong to the subsystem consists of two roots. The number of $W(lA_1)$-orbits of such blocks is equal to $\binom{l}{2}$. Hence a prelevel is defined by $\binom{l}{2}$ submodules in $R^2$. In order to be an almost level or a level such submodules must satisfy certain conditions that we will not try to write down explicitly. However, in order to describe overgroups of $E(lA_1,R)$ in $G(C_l,R)$ it is enough to consider only such levels that their components in blocks of two roots are contained in the diagonal. Almost levels with such property are in natural one-to-one correspondence with nets of ideals for the system $C_l$. Any such an almost level is a level, and one can apply the results of \cite{GvozInside} to the corresponding sandwich.

Note that we also obtained the description of overgroups of
$$
E(C_l,R)\mbox{ in } G(A_{2l-1},R).
$$
They are described by ideals of the ring $R$. For the simply connected groups this result was obtained earlier by N.~A.~Vavilov and V.~A.~Petrov in the paper~\cite{VP-Ep}. In addition, they require the condition $(**)$ only for the case $l=2$. 

Note that over a field the description of overgroups for the subsystem $lA_1\le A_{2l-1}$ in case of simply connected groups was obtained in the paper~\cite{KoibaevBlockdiag} by V.~A.~Koibaev.

\subsection{$\textbf{A}_2\le\textbf{D}_4$}
\label{A2D4}

Consider the subsystem $A_2\le D_4$. It is easy to see that the condition $(*)$ holds true.  All the blocks whose roots does not belong to the subsystem consists of three roots. There are two $W(A_2)$-orbits of such blocks. Hence a prelevel is defined by two submodules in $R^3$. In order to be an almost level or a level such submodules must satisfy certain conditions that we will not try to write down explicitly. However, in order to describe overgroups of $E(A_2,R)$ in $G(G_2,R)$ it is enough to consider only such levels that their components in blocks of three roots are contained in the diagonal. Almost levels with such property are in natural one-to-one correspondence with pairs of ideals $A$,$B\unlhd R$ that satisfy the conditions $2A^2\le B$ and $2B^2\le A$. In order to be a level such a pair must satisfy the following stronger condition: the ideal $A$ must contain the squares of all the elements of the ideal $B$ and vice versa. 

One can give an example of a pair of ideals that defines an almost level but not a level. For example assume that $2=0$ in the ring $R$ and take $A=R$ and $B=(0)$. However, if $2\in R^*$, then every almost level is a level and one can apply the results of the paper \cite{GvozInside} to the corresponding sandwich.

Note that we also obtained the description of overgroups of $E(G_2,R)$ in $G(D_4,R)$. They are described by pairs of submodules in $R^2$ that satisfy certain conditions.

\section{Cases that are not done}

Using the tables from the paper \cite{VSch} we state the problems for multiply laced systems such that the description of overgroups for them cannot be obtained as a corollary of the main result of the present paper. Firstly, there are three problems for the full rank subsystems. 

\begin{prob}
 Describe overgroups of $E(A_2+\tilde{A_2},R)$ in $G(F_4,R)$.
\end{prob}

\begin{prob}
 Describe overgroups of $E(A_3+\tilde{A_1},R)$ in $G(F_4,R)$.
\end{prob}

\begin{prob}
 Describe overgroups of $E(A_1+\tilde{A_1},R)$ in $G(G_2,R)$.
\end{prob}

Here tilda stands for the short root component. It is most likely that the first two problems can be solved by the same means that are used in the present paper. Namely one can use $A_2$ of long roots in order to get into the parabolic subgroup. The third problem can be more complicated. 
 
Secondly, the cases for subsystems $\Delta\le \Phi$, where $\Phi$ is a doubly laced systems and there is a short root in $\Phi\cap\Delta^\perp$ that can be added to the short root from $\Delta$ so that the sum was a root, remain unstudied. In such problems one have to assume that $2\in R^*$. There is one such problems for the exceptional groups.

\begin{prob}
  Describe overgroups of $E(B_3,R)$ in $G(F_4,R)$.
\end{prob}

There are also such subsystems in $B_n$.

\begin{prob}
  Describe overgroups of $E(B_{n-1},R)$ in $G(B_n,R)$, where $n\ge 3$. For which subsystem smaller than $B_{n-1}$ the description is possible\textup? 
\end{prob}

\end{document}